\newtheorem{theorem}{Theorem}[section]
\newtheorem{corollary}[theorem]{Corollary}
\newtheorem{proposition}[theorem]{Proposition}
\newtheorem{lemma}[theorem]{Lemma}
\theoremstyle{definition}
\newtheorem{remark}[theorem]{Remark}
\newtheorem{definition}[theorem]{Definition}
\newtheorem{example}[theorem]{Example}
\def\ie{{\em i.e.,} }
\def\eg{{\em e.g.} }
\newfont\bbf{msbm10 at 12pt}
\def\eps{\varepsilon}
\def\phi{\varphi}
\def\R{{\mathbb R}}
\def\N{{\mathbb N}}
\def\theta{\vartheta}
\def\diam{\mbox{\rm diam}}
\def\Ent{\mbox{\rm Ent}}
\begin{document}

\title{Topological entropy of diagonal maps on inverse limit spaces}

\author{Ana\ Anu\v si\' c and Christopher Mouron}
\address[A.\ Anu\v{s}i\'c]{Departamento de Matem\'atica Aplicada, IME-USP, Rua de Mat\~ao 1010, Cidade Universit\'aria, 05508-090 S\~ao Paulo SP, Brazil}
\email{anaanusic@ime.usp.br}
\address[C.\ Mouron]{Rhodes College, 2000 North Parkway, Memphis, TN 38112, USA}
\email{mouronc@rhodes.edu}
\thanks{AA was supported by grant 2018/17585-5, S\~ao Paulo Research Foundation (FAPESP)}
\date{\today}

\subjclass[2010]{37B40, 
	 37E05, 
	 37B45, 
	 54C60 
	}
\keywords{topological entropy, inverse limit space, set-valued maps}

\begin{abstract}
	We give an upper bound for the topological entropy of maps on inverse limit spaces in terms of their set-valued components. In a special case of a diagonal map on the inverse limit space $\underleftarrow{\lim}(I,f)$, where every diagonal component is the same map $g\colon I\to I$ which strongly commutes with $f$ (\ie $f^{-1}\circ g=g\circ f^{-1}$), we show that the entropy equals $\max\{\Ent(f),\Ent(g)\}$. As a side product, we develop some techniques for computing topological entropy of set-valued maps.
\end{abstract}

\maketitle

\section{Introduction}\label{sec:intro}

Topological entropy is one of the most popular (topological) measures of complexity of a dynamical system. It was introduced by Adler, Konheim and  McAndrews \cite{AdKoMcA} for compact topological spaces, and further refined by Bowen \cite{Bowen-ent} and Dinaburg \cite{Dinaburg} in case of metrizable spaces. We are interested in describing entropy of maps on complicated spaces in terms of much simpler, one-dimensional maps. 

The spaces we study can be described as {\em inverse limits} on compact, connected, metric spaces (often called {\em continua}) $X_i$, $i\geq 0$, with continuous and onto {\em bonding functions} $f_i\colon X_i\to X_{i-1}$, $i\in\N$. That is, we define
$$\underleftarrow{\lim}(X_i,f_i):=\{(x_0,x_1,x_2,\ldots): f_i(x_i)=x_{i-1}, i\in\N\}\subseteq \prod_{i=0}^{\infty} X_i.$$
The space $\underleftarrow{\lim}(X_i,f_i)$, equipped with the product topology (\ie the smallest topology in which all projections $\pi_i\colon \underleftarrow{\lim}(X_i,f_i)\to X_i$ are continuous), is also a continuum. Normally, one takes spaces $X_i$ and maps $f_i$ to be simple, and uses the inverse limit structure to generate a very complicated space. For example, every one-dimensional continuum can be described as an inverse limit on graphs, with bonding maps which are piecewise linear, see \cite{Freudenthal}. The factor spaces $X_i$ correspond to {\em nerves} of finite covers of $\underleftarrow{\lim}(X_i,f_i)$, and bonding maps $f_i$ describe behaviors of nerves of refinements.

We are interested in calculating the entropy of maps on an inverse limit space $\underleftarrow{\lim}(X_i,f_i)$. For example, some maps on $\underleftarrow{\lim}(X_i,f_i)$ can be described by their ``straight down" components. That is, assume that there is a sequence of maps $g_i\colon X_i\to X_i$ such that $g_i\circ f_{i+1}=f_i\circ g_i$ for every $i\geq 0$, see the commutative diagram in Figure~\ref{fig:straightdown}. 

\begin{figure}[!ht]
	\begin{tikzpicture}[->,>=stealth',auto, scale=2]
	\node (1) at (0,1) {$X_0$};
	\node (2) at (0,0) {$X_0$};
	\node (3) at (1,1) {$X_1$};
	\node (4) at (1,0) {$X_1$};
	\node (5) at (2,1) {$X_2$};
	\node (6) at (2,0) {$X_2$};
	\node (7) at (3,1) {$X_3$};
	\node (8) at (3,0) {$X_3$};
	\node (9) at (4,1) {$\ldots$};
	\node (10) at (4,0) {$\ldots$};
	\draw [->] (3) to (1);
	\draw [->] (5) to (3);
	\draw [->] (7) to (5);
	\draw [->] (9) to (7);
	\draw [->] (4) to (2);
	\draw [->] (6) to (4);
	\draw [->] (8) to (6);
	\draw [->] (10) to (8);
	\draw [->] (1) to (2);
	\draw [->] (3) to (4);
	\draw [->] (5) to (6);
	\draw [->] (7) to (8);
	
	\node at (0.55,1.15) {\tiny $f_1$};
	\node at (1.55,1.15) {\tiny $f_2$};
	\node at (2.55,1.15) {\tiny $f_3$};
	\node at (3.55,1.15) {\tiny $f_4$};
	\node at (0.55,-0.2) {\tiny $f_1$};
	\node at (1.55,-0.2) {\tiny $f_2$};
	\node at (2.55,-0.2) {\tiny $f_3$};
	\node at (3.55,-0.2) {\tiny $f_4$};
	
	\node at (-0.1,0.5) {\tiny $g_0$};
	\node at (0.9,0.5) {\tiny $g_1$};
	\node at (1.9,0.5) {\tiny $g_2$};
	\node at (2.9,0.5) {\tiny $g_3$};
	\end{tikzpicture}
	\caption{Commutative diagram from Ye's theorem.}
	\label{fig:straightdown}
\end{figure} 

Then we can define a map $\Psi\colon \underleftarrow{\lim}(X_i,f_i)\to \underleftarrow{\lim}(X_i,f_i)$ as $\Psi((x_i)_{i=0}^{\infty})=((g_i(x_i))_{i=0}^{\infty})$. It was shown by Ye in \cite{Ye} that  $\lim_{i\to\infty}\Ent(g_i)$ exists and $\Ent(\Psi)=\lim_{i\to\infty}\Ent(g_i)$. A special case of this result goes back to Bowen \cite{Bowen}. He showed that if there are $X,f$ such that $X_i=X$, and $f_i=f$ for all $i$, then the entropy of the {\em shift homeomorphism} (also called natural extension) $\hat f\colon\underleftarrow{\lim}(X,f)\to\underleftarrow{\lim}(X,f)$, given by $\hat f((x_i)_{i=0}^{\infty})=(f(x_i))_{i=0}^{\infty}$, equals the entropy of $f$. Ye's result was used by  Mouron in \cite{Mou-psarc}, to show that the entropy of shift homeomorphisms on the {\em pseudo-arc} is either $0$ or $\infty$.

However, not all maps on $\underleftarrow{\lim}(X_i,f_i)$ can be represented by their ``straight-down" components, see  Miodusewski's characterization in \cite{Miodus}. For example, the map can be given by its ``diagonal components" as in  Figure~\ref{fig:diagonal}, or diagrams in Figure~\ref{fig:straightdown} and Figure~\ref{fig:diagonal} can only $\eps_i$-commute, for $\eps_i\to 0$ as $i\to\infty$. We show that there actually is a way to recover ``straight-down" components of any map on an inverse limit, with only a small price to pay - the ``straight-down" components are allowed to be set-valued functions. Given a continuum $X$, let $2^X$ denote the set of all non-empty closed subsets of $X$. We prove the following:

{\bf Theorem\,3.4.} Let $\Psi\colon\underleftarrow{\lim}(X_i, f_i)\to\underleftarrow{\lim}(X_i, f_i)$ be a continuous function. For $i\geq 0$ define $\psi_i\colon X_i\to 2^{X_i}$ as $\psi_i(x)=\pi_i\circ\Psi\circ\pi_i^{-1}(x)$. Then $\Ent(\Psi)\leq \liminf_{i\rightarrow\infty}\Ent(\psi_i)$.

The definition of entropy of a set-valued map is an extension of the standard Bowen's definition for single-valued maps, introduced and explored in a number of recent papers, see \eg \cite{ErcegKen,KelTen,COAR,AlKel}, either from a purely dynamical perspective, or to understand so-called {\em generalized inverse limits} introduced by Mahavier \cite{Mah}, and developed by Ingram and Mahavier \cite{IngMah}. In generalized inverse limits one allows the bonding maps to be set-valued, so it makes sense to have  a set-valued natural extension as in \cite{JudyVan}. Generalized inverse limits are beyond the scope of this paper. We note, however, that the results of this paper provide an interesting interplay between the standard notion of a map and inverse limit, set-valued maps and their dynamics, and generalized inverse limits.

Given a map $\Psi$ on an inverse limit $\underleftarrow{\lim}(X_i,f_i)$, we can find an upper bound for its entropy by computing the entropy of ``straight-down" components $\psi_i\colon X_i\to 2^{X_i}$. One might hope that $\lim_{i\to\infty}\Ent(\psi_i)$ exists and equals the entropy of $\Psi$ as in Ye's result, but that is unfortunately not true in general, as we show in the appendix. However, there is a wide class for which it is indeed true, which we discuss next.

Let $\Psi\colon\underleftarrow{\lim}(X_i,f_i)\to \underleftarrow{\lim}(X_i,f_i)$ be a {\em diagonal} map given by a commutative diagram as in Figure~\ref{fig:diagonal}. That is, there are maps $g_i\colon X_i\to X_{i-1}$, such that $g_i\circ f_{i+1}=f_i\circ g_{i+1}$ for every $i\in\N$, and $\Psi((x_0,x_1,x_2,\ldots))=(g_1(x_1),g_2(x_2),\ldots)$. 

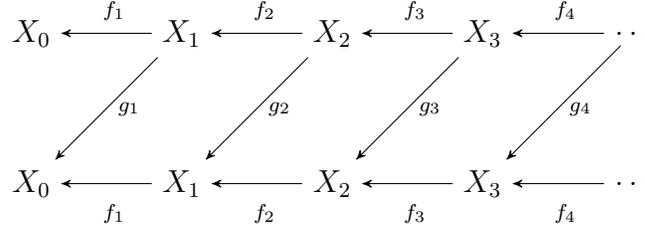
\begin{figure}[!ht]
	\begin{tikzpicture}[->,>=stealth',auto, scale=2]
	\node (1) at (0,1) {$X_0$};
	\node (2) at (0,0) {$X_0$};
	\node (3) at (1,1) {$X_1$};
	\node (4) at (1,0) {$X_1$};
	\node (5) at (2,1) {$X_2$};
	\node (6) at (2,0) {$X_2$};
	\node (7) at (3,1) {$X_3$};
	\node (8) at (3,0) {$X_3$};
	\node (9) at (4,1) {$\ldots$};
	\node (10) at (4,0) {$\ldots$};
	\draw [->] (3) to (1);
	\draw [->] (5) to (3);
	\draw [->] (7) to (5);
	\draw [->] (9) to (7);
	\draw [->] (4) to (2);
	\draw [->] (6) to (4);
	\draw [->] (8) to (6);
	\draw [->] (10) to (8);
	\draw [->] (3) to (2);
	\draw [->] (5) to (4);
	\draw [->] (7) to (6);
	\draw [->] (9) to (8);
	
	\node at (0.55,1.15) {\tiny $f_1$};
	\node at (1.55,1.15) {\tiny $f_2$};
	\node at (2.55,1.15) {\tiny $f_3$};
	\node at (3.55,1.15) {\tiny $f_4$};
	\node at (0.55,-0.2) {\tiny $f_1$};
	\node at (1.55,-0.2) {\tiny $f_2$};
	\node at (2.55,-0.2) {\tiny $f_3$};
	\node at (3.55,-0.2) {\tiny $f_4$};
	
	\node at (0.65,0.5) {\tiny $g_1$};
	\node at (1.65,0.5) {\tiny $g_2$};
	\node at (2.65,0.5) {\tiny $g_3$};
	\node at (3.65,0.5) {\tiny $g_4$};
	\end{tikzpicture}
	\caption{Commutative diagram in the construction of a diagonal map.}
	\label{fig:diagonal}
\end{figure} 

Then Theorem~\ref{thm:liminf} gives $\Ent(\Psi)\leq\liminf_{i\to\infty}\Ent(g_i\circ f_i^{-1})$. If we additionally assume that $g_{i+1}\circ f_{i+1}^{-1}=f_i^{-1}\circ g_i$, for every $i\in\N$, then we show in Proposition~\ref{prop:entdiag} that $\lim_{i\to\infty}\Ent(g_i\circ f_i^{-1})$ exists and $\Ent(\Psi)=\lim_{i\rightarrow \infty}\Ent(g_i\circ f_i^{-1})$. Thus in this case the entropy of the map is completely determined by its (set-valued) straight-down components.

We note that diagonal maps on inverse limits have already proved to be very useful. For example, they were used to construct an example of a tree-like continuum without a fixed point property (\ie for which there is a self-map without a fixed point) by Oversteegen and Rogers \cite{OvRog, OvRog2}, and Hoehn and Hern\'andez-Guti\'errez \cite{HoHerGut}. Also, diagonal map was used by Mouron in \cite{Mou2} to give an example of an exact map on the pseudo-arc.

In the remainder of the paper we study in more detail the property $g_{i+1}\circ f_{i+1}^{-1}=f_i^{-1}\circ g_i$ from Proposition~\ref{prop:entdiag}, and how to compute $\Ent(g_i\circ f_i^{-1})$ in that case. In particular, we restrict to a case when there are maps $f,g\colon I\to I$ such that $f_i=f$, and $g_i=g$ for every $i\in\N$, where $I=[0,1]$ is the unit interval. Then the diagonal map is $\Psi\colon\underleftarrow{\lim}(I,f)\to\underleftarrow{\lim}(I,f)$, $\Psi((x_0,x_1,x_2,\ldots))=(g(x_1),g(x_2),\ldots)$. In particular, $\Ent(\Psi)\leq\Ent(g\circ f^{-1})$, and if $g\circ f^{-1}=f^{-1}\circ g$, then $\Ent(\Psi)=\Ent(g\circ f^{-1})$.

Note that if $f,g\colon X\to X$ commute (\ie $f\circ g=g\circ f$), then $g\circ f^{-1}\subseteq f^{-1}\circ g$. Commuting maps $f,g$ for which $g\circ f^{-1}=f^{-1}\circ g$ are called {\em strongly commuting}, and were introduced in \cite{AM-stronglycomm}. There, we show that if $f,g\colon I\to I$ are strongly commuting and piecewise monotone, then they can be decomposed into finitely many common invariant subintervals on which at least one of $f,g$ is an open map. Actually, on each common invariant subinterval either both $f,g$ are open, or at least one is monotone. So it turns out that, in order to compute $\Ent(g\circ f^{-1})$ for strongly commuting interval maps $f,g$ as above, we only need to compute $\Ent(F_n\circ F_m^{-1})$, where $n,m\geq 2$, and $F_n$ denotes an open interval map with $n-1$ critical points. We show the following:

{\bf Theorem\,4.20.} Let $F_n\circ F_m^{-1}=F_m^{-1}\circ F_n$, $n,m\geq 2$. Then $$\Ent(F_n\circ F^{-1}_m)=\max\{\log(n), \log(m)\}.$$

For example, this shows that for every odd $n\geq 2$, there is an entropy $\log(n)$ map on the {\em Knaster continuum} $\underleftarrow{\lim}(I,T_2)$, where $T_2\colon I\to I$ is the full tent map given by $T_2(x)=\min\{2x,2(1-x)\}$, for $x\in I$. It is known that all the homeomorphisms on the Knaster continuum have entropy $k\log(2)$, for $k\in\N$, see \cite{BrSt}.

In particular, Theorem~\ref{thm:main} in combination with \cite{AM-stronglycomm} implies that if $f,g\colon I\to I$ are strongly commuting and piecewise monotone, then $\Ent(g\circ f^{-1})=\max\{\Ent(f),\Ent(g)\}$, see Corollary~\ref{cor:main}. The methods we use resemble the ones used to compute entropy of  piecewise monotone interval maps by Misiurewicz and Szlenk \cite{MisSl}. In particular, we note that for set-valued maps of the form $g\circ f^{-1}$ for strongly commuting interval maps $f,g$, the entropy can be computed by counting the number of ``monotone branches", or cardinality of image and preimage sets. This methods have a potential to be generalized further.

The outline of the paper is as follows: we give preliminaries on set-valued functions, entropy, and inverse limits in Section~\ref{sec:prel}. Then we introduce ``straight-down" components of a map on an inverse limit and prove Theorem~\ref{thm:liminf} and Proposition~\ref{prop:entdiag} in Section~\ref{sec:entropy}. Section~\ref{sec:main} is technically most difficult part of the paper in which we compute $\Ent(F_m\circ F_n^{-1}=F_n^{-1}\circ F_m)$, where $n,m\geq 2$, and $F_n\colon I\to I$ denotes an open map with $n-1$ critical points, \ie points at which $F_n$ is locally not monotone. The main theorem in that section is Theorem~\ref{thm:main}.  We note that the techniques introduced in this section work in much higher generality. Finally, in Section~\ref{sec:stronglycomm} we recall some facts about strongly commuting interval maps $f,g$ from \cite{AM-stronglycomm}, and show in Corollary~\ref{cor:main} that if $f,g\colon I\to I$ are piecewise monotone, and strongly commuting maps, then $\Ent(g\circ f^{-1})=\max\{\Ent(f),\Ent(g)\}$. In the appendix we give an example of an inverse limit and a map which show that the condition $g_{i+1}\circ f_{i+1}^{-1}=f_i^{-1}\circ g_i$, $i\in\N$ from Proposition~\ref{prop:entdiag} is indeed necessary.

\section{Preliminaries}\label{sec:prel}
Given two topological spaces $X, Y$, a continuous function $f\colon X\to Y$ will be referred to as a {\em map}. We will restrict this paper to compact, connected, metrizable spaces (also called {\em continua}), and often, only the unit interval $I=[0,1]$. Map $f\colon I\to I$ is called {\em piecewise monotone} if there is $n\in\N$, and points $c_0=0<c_1<\ldots<c_{n-1}<c_n=1$ such that $f|_{[c_i,c_{i+1}]}$ is one-to-one for every $i\in\{0,\ldots,n-1\}$. In that case, points $c_i$, for $i\in\{1,\ldots,n-1\}$ are called {\em critical points} of $f$. Furthermore, we say that $f\colon I\to I$ is {\em open} if $f(U)$ is open in $I$ for every open set $U\subset I$. Note that is $f$ is piecewise monotone and open, then $f$ maps $[c_i,c_{i+1}]$ onto $I$ for every $i\in\{0,\ldots,n-1\}$.

{\em Set-valued function} from $X$ to $Y$ is a function $F\colon X\to 2^Y$, where $2^Y$ denotes the set of all non-empty closed subsets of $Y$. We always assume that set-valued functions are {\em upper semi-continuous}, \ie the {\em graph} of $F$,
$$\Gamma(F)=\{(x,y): y\in F(x)\}$$
is closed in $X\times Y$. When there is no confusion, we will often abuse the notation and denote set-valued functions as $F\colon X\to Y$.

Let $F\colon X\to X$ be a set-valued function. For $n\in\N$, an {\em $n$-orbit of F} is every $n$-tuple $(x_1, \ldots, x_n)$ such that $x_{i+1}\in F(x_i)$ for every $1\leq i<n$. Denote by $Orb_n(F)$ the set of all $n$-orbits of $F$.

For $n\in\N$ and $\eps>0$, we say that a set $S\subset Orb_n(F)$ is {\em $(n, \eps)$-separated} if for every $(x_1,\ldots,x_n), (y_1,\ldots, y_n)\in S$ there exists $i\in\{1, \ldots, n\}$ such that $d_X(x_i,y_i)>\eps$, where $d_X$ denotes the metric on $X$. Let $s_{n,\eps}(F)$ denote the largest cardinality of an $(n, \eps)$-separated set.

The {\em entropy} of $F$ is defined as $$\Ent(F)=\lim_{\eps\to 0}\limsup_{n\to\infty}\frac{1}{n}\log(s_{n,\eps}(F)).$$

Similarly, for $n\in\N$ and $\eps>0$, we say that a set $S\subset Orb_n(F)$ is {\em $(n, \eps)$-spanning} if for every $(x_1, \ldots, x_n)\in Orb_n(F)$ there exists $(y_1, \ldots, y_n)\in S$ such that $d_X(x_i, y_i)<\eps$ for every $i\in\{1, \ldots, n\}$. The smallest cardinality of an $(n, \eps)$-spanning set is denoted by $r_{n, \eps}(F)$. It can be shown (see \eg \cite{AlKel}) that

$$\Ent(F)=\lim_{\eps\to 0}\limsup_{n\to\infty}\frac{1}{n}\log(r_{n,\eps}(F)).$$

Let $X_i$, $i\geq 0$ be continua and let $f_i\colon X_i\to X_{i-1}$, be continuous functions, $i\geq 1$. The {\em inverse limit space} of the system $(X_i, f_i)$ is defined as
$$\underleftarrow{\lim}(X_i, f_i):=\{(x_0, x_1, x_2, \ldots): f_i(x_i)=x_{i-1}, i\geq 1\}\subset \prod_{i=0}^{\infty}X_i,$$
equipped with the product topology, \ie the smallest topology such that the {\em coordinate projections} $\pi_i\colon\underleftarrow{\lim}(X_i, f_i)\to X_i$, $i\geq 0$, are continuous. The space $\underleftarrow{\lim}(X_i, f_i)$ is again a continuum. For $i<j$ we denote by $f_i^j\colon X_j\to X_i$, $f_i^j=f_{i+1}\circ\ldots\circ f_j$. Note that $f_i^j\circ\pi_j=\pi_i$ for every $i<j$.

If we assume that $\diam(X_i)=1$ for every $i\geq 0$, then we can write the metric $d$ on $\underleftarrow{\lim}(X_i,f_i)$ as
$$d((x_i)_{i=0}^{\infty},(y_i)_{i=0}^{\infty})=\sum_{i=0}^{\infty}\frac{d_{i}(x_i,y_i)}{2^i},$$
for every $(x_i)_{i=0}^{\infty},(y_i)_{i=0}^{\infty}\in \underleftarrow{\lim}(X_i,f_i)$, where $d_i\colon X_i\times X_i\to \R$ denotes a metric on $X_i$.

Given $\underleftarrow{\lim}(X_i, f_i)$, we are interested in the dynamical properties of maps $f\colon \underleftarrow{\lim}(X_i, f_i)\to \underleftarrow{\lim}(X_i, f_i)$, and in particular, their topological entropy. For example, if $X_i=X$ and $f_i=f$, then we can define the {\em shift homeomorphism} (also called natural extension) $\hat f\colon \underleftarrow{\lim}(X, f)\to\underleftarrow{\lim}(X, f)$ as
$$\hat f(x_0, x_1, x_2, \ldots)=(f(x_0), f(x_1), f(x_2), \ldots)=(f(x_0), x_0, x_1, \ldots).$$
The map can be visualized by the commutative diagram in Figure~\ref{fig:shift}.

\begin{figure}[!ht]
	\begin{tikzpicture}[->,>=stealth',auto, scale=2]
	\node (1) at (0,1) {$X$};
	\node (2) at (0,0) {$X$};
	\node (3) at (1,1) {$X$};
	\node (4) at (1,0) {$X$};
	\node (5) at (2,1) {$X$};
	\node (6) at (2,0) {$X$};
	\node (7) at (3,1) {$X$};
	\node (8) at (3,0) {$X$};
	\node (9) at (4,1) {$\ldots$};
	\node (10) at (4,0) {$\ldots$};
	\draw [->] (3) to (1);
	\draw [->] (5) to (3);
	\draw [->] (7) to (5);
	\draw [->] (9) to (7);
	\draw [->] (4) to (2);
	\draw [->] (6) to (4);
	\draw [->] (8) to (6);
	\draw [->] (10) to (8);
	\draw [->] (1) to (2);
	\draw [->] (3) to (4);
	\draw [->] (5) to (6);
	\draw [->] (7) to (8);
	
	\node at (0.55,1.15) {\tiny $f$};
	\node at (1.55,1.15) {\tiny $f$};
	\node at (2.55,1.15) {\tiny $f$};
	\node at (3.55,1.15) {\tiny $f$};
	\node at (0.55,-0.2) {\tiny $f$};
	\node at (1.55,-0.2) {\tiny $f$};
	\node at (2.55,-0.2) {\tiny $f$};
	\node at (3.55,-0.2) {\tiny $f$};
	
	\node at (-0.1,0.5) {\tiny $f$};
	\node at (0.9,0.5) {\tiny $f$};
	\node at (1.9,0.5) {\tiny $f$};
	\node at (2.9,0.5) {\tiny $f$};
	\end{tikzpicture}
	\caption{Commutative diagram realizing the action of the natural extension $\hat f$.}
	\label{fig:shift}
\end{figure}

It is not difficult to see (\eg \cite{Bowen}) that $\Ent(\hat f)=\Ent(f)$, so the entropy can be computed using the ``straight-down" components of $\hat f$. Ye generalized this fact to continuous maps on inverse limits which can be represented by a ``straight-down" commutative diagram:

\begin{theorem}[Theorem 3.1 in \cite{Ye}]
	Let $X_i$, $i\geq 0$ be continua and $f_i\colon X_i\to X_{i-1}$, $i\geq 1$, be continuous functions. Assume that there exist maps $g_i\colon X_i\to X_i$, such that $g_i\circ f_{i+1}=f_{i+1}\circ g_{i+1}$, for every $i\geq 0$ (see the commutative diagram in Figure~\ref{fig:straightdown}). Then for the map 
	$\Psi\colon\underleftarrow{\lim}(X_i, f_i)\to\underleftarrow{\lim}(X_i, f_i)$ given as 
	$$\Psi((x_0,x_1,x_2, \ldots))=(g_0(x_0), g_1(x_1), g_2(x_2), \ldots),$$ 
	it holds that $\Ent(\Psi)=\sup_i \Ent(g_i)$.
\end{theorem}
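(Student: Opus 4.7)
The plan is to establish the two inequalities $\Ent(\Psi) \geq \sup_i \Ent(g_i)$ and $\Ent(\Psi) \leq \sup_i \Ent(g_i)$ separately.

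For the lower bound, the commutativity of the diagram yields, for each $i \geq 0$, a semi-conjugacy $\pi_i \circ \Psi = g_i \circ \pi_i$, where $\pi_i \colon \underleftarrow{\lim}(X_i,f_i) \to X_i$ is continuous and surjective. Lifting an $(n,\eps)$-separated set for $g_i$ through $\pi_i$ (choosing one preimage per point) produces an $(n,\eps')$-separated set for $\Psi$, where $\eps'$ depends on $\eps$ via a modulus of continuity of $\pi_i$ and still tends to $0$ with $\eps$. This is the standard factor-map argument and gives $\Ent(g_i)\le\Ent(\Psi)$ for every $i$.

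For the upper bound, the key point is that the product metric on $\underleftarrow{\lim}(X_i,f_i)$ only sees finitely many coordinates up to any prescribed error. Given $\eps>0$, first choose $N$ large enough that $2^{-(N-1)}<\eps/2$. Then, using uniform continuity of the finitely many bonding compositions $f_0^{N-1},f_1^{N-1},\ldots,f_{N-2}^{N-1}$, select $\delta>0$ so that $d_{N-1}(a,b)<\delta$ forces $d_i(f_i^{N-1}(a),f_i^{N-1}(b))<\eps/4$ for every $i\le N-1$. The engine of the argument is the intertwining identity $f_i^{N-1}\circ g_{N-1}^{\,j}=g_i^{\,j}\circ f_i^{N-1}$, which I would prove by a short induction iterating $g_i\circ f_{i+1}=f_{i+1}\circ g_{i+1}$. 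With this identity, given any $(n,\delta)$-spanning set $S\subseteq X_{N-1}$ for $g_{N-1}$ and any lift $\tilde s \in \pi_{N-1}^{-1}(s)$ for each $s\in S$, the set of such $\tilde s$ is $(n,\eps)$-spanning for $\Psi$: indeed, if $s\in S$ shadows the $g_{N-1}$-orbit of $x_{N-1}$, then at every coordinate $i<N$ and every time $j<n$ the points $g_i^{\,j}(f_i^{N-1}(s))=f_i^{N-1}(g_{N-1}^{\,j}(s))$ and $g_i^{\,j}(f_i^{N-1}(x_{N-1}))=f_i^{N-1}(g_{N-1}^{\,j}(x_{N-1}))$ are within $\eps/4$ by the choice of $\delta$, so these coordinates contribute less than $\eps/2$ to the product-metric distance, while coordinates with index $\ge N$ contribute at most $2^{-(N-1)}<\eps/2$ automatically. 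Hence $r_{n,\eps}(\Psi)\le r_{n,\delta}(g_{N-1})$, and passing to limits in the definition of entropy gives $\Ent(\Psi)\le \Ent(g_{N-1}) \le \sup_i \Ent(g_i)$.

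The main obstacle is the quantitative matching of metrics across coordinates in the upper bound: one must first choose $N$ (to tame the tail of the sum metric), then $\delta$ depending on $N$ (via uniform continuity of the finitely many $f_i^{N-1}$), and then use the intertwining relation to ensure that a single-coordinate $\delta$-spanning set at level $N-1$ controls all lower coordinates uniformly in the time index $j$. The lower bound and the inductive verification of the intertwining identity are routine; once the bookkeeping for the upper bound is in place, the two inequalities combine to the claimed equality. (As a byproduct, the factorization $g_{i-1}\circ f_i = f_i\circ g_i$ with $f_i$ surjective shows the sequence $\Ent(g_i)$ is non-decreasing, so the supremum agrees with the limit appearing in Ye's original formulation.)
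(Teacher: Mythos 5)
Your proof is correct, but note that the paper does not actually prove this statement --- it quotes it as Theorem~3.1 of \cite{Ye} --- so the honest comparison is with the paper's surrounding machinery. Both halves of your argument are sound: the lower bound is the standard factor-map argument via $\pi_i\circ\Psi=g_i\circ\pi_i$ (with the explicit product metric you can simply take $\eps'=\eps/2^i$, since $d(x,y)\geq d_i(x_i,y_i)/2^i$), and the upper bound correctly combines the tail estimate, uniform continuity of the finitely many maps $f_i^{N-1}$, and the intertwining identity $g_i^{\,j}\circ f_i^{N-1}=f_i^{N-1}\circ g_{N-1}^{\,j}$, which does follow by the induction you indicate. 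One caveat: both halves silently use surjectivity of the projections $\pi_i$ (to lift separated sets and to lift the spanning set $S$), which requires the bonding maps $f_i$ to be onto; this hypothesis appears in the paper's introduction and in Ye's original theorem but is omitted from the statement as quoted, and without it the theorem is false (constant bonding maps collapse the inverse limit to a point while the $g_i$ may have positive entropy), so you should state it explicitly. As for the route: your upper bound is a special case of the paper's Theorem~\ref{thm:liminf}, which bounds $\Ent(\Psi)$ for an \emph{arbitrary} continuous self-map of the inverse limit by the entropies of the set-valued components $\psi_i=\pi_i\circ\Psi\circ\pi_i^{-1}$ (here $\psi_i=g_i$). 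The paper's proof is dual to yours: instead of lifting $(n,\delta)$-spanning sets up from a high coordinate, it projects $(n,\eps)$-separated sets down to a coordinate $i>m(\eps)$ (Lemma~\ref{lem:yadda} and Proposition~\ref{prop:yadda2}); that argument needs no intertwining identity, which is exactly what makes it work for maps $\Psi$ that are not straight-down --- your proof trades that generality for the cleaner single-valued bookkeeping. Your lower bound, meanwhile, parallels the orbit-lifting construction the paper carries out for diagonal maps in Proposition~\ref{prop:entdiag}. Finally, your closing remark that $\Ent(g_i)$ is non-decreasing (so the supremum equals the limit) is correct and reconciles the quoted statement with the formulation of Ye's result given in the paper's introduction.
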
 

\section{Maps on inverse limit spaces and their straight-down components}\label{sec:entropy}

In general there are many self-maps of $\underleftarrow{\lim}(X_i, f_i)$ which cannot be represented by commutative ``straight-down" diagrams as in Figure~\ref{fig:straightdown}. Here we show that every self-map of $\underleftarrow{\lim}(X_i, f_i)$ can be represented as a ``straight-down" diagram, but we need to allow the ``straight-down" functions to be set-valued. We then give an upper bound for the entropy using those set-valued maps. We also obtain a lower bond in particular cases.

Let $\Psi\colon\underleftarrow{\lim}(X_i, f_i)\to\underleftarrow{\lim}(X_i, f_i)$ be a continuous function. For $i\geq 0$, define (possibly set-valued functions) $\psi_i\colon X_i\to X_i$ as $\psi_i(x)=\pi_i\circ\Psi\circ\pi_i^{-1}(x)$, for every  $x\in X_i$. Then we obtain the diagram in Figure~\ref{fig:generalYe}.
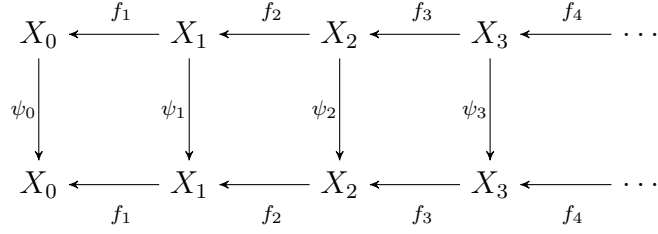
\begin{figure}[!ht]
	\begin{tikzpicture}[->,>=stealth',auto, scale=2]
	\node (1) at (0,1) {$X_0$};
	\node (2) at (0,0) {$X_0$};
	\node (3) at (1,1) {$X_1$};
	\node (4) at (1,0) {$X_1$};
	\node (5) at (2,1) {$X_2$};
	\node (6) at (2,0) {$X_2$};
	\node (7) at (3,1) {$X_3$};
	\node (8) at (3,0) {$X_3$};
	\node (9) at (4,1) {$\ldots$};
	\node (10) at (4,0) {$\ldots$};
	\draw [->] (3) to (1);
	\draw [->] (5) to (3);
	\draw [->] (7) to (5);
	\draw [->] (9) to (7);
	\draw [->] (4) to (2);
	\draw [->] (6) to (4);
	\draw [->] (8) to (6);
	\draw [->] (10) to (8);
	\draw [->] (1) to (2);
	\draw [->] (3) to (4);
	\draw [->] (5) to (6);
	\draw [->] (7) to (8);
	
	\node at (0.55,1.15) {\tiny $f_1$};
	\node at (1.55,1.15) {\tiny $f_2$};
	\node at (2.55,1.15) {\tiny $f_3$};
	\node at (3.55,1.15) {\tiny $f_4$};
	\node at (0.55,-0.2) {\tiny $f_1$};
	\node at (1.55,-0.2) {\tiny $f_2$};
	\node at (2.55,-0.2) {\tiny $f_3$};
	\node at (3.55,-0.2) {\tiny $f_4$};
	
	\node at (-0.1,0.5) {\tiny $\psi_0$};
	\node at (0.9,0.5) {\tiny $\psi_1$};
	\node at (1.9,0.5) {\tiny $\psi_2$};
	\node at (2.9,0.5) {\tiny $\psi_3$};
	\end{tikzpicture}
	\caption{Decomposing the map $\protect\Psi$ into ``straight-down" (possibly set-valued) components $\protect\psi_i$.}
	\label{fig:generalYe}
\end{figure}

Note that if $y\in\underleftarrow{\lim}(X_i, f_i)$ is such that $\pi_j(y)=x\in X_j$, then for $i<j$ it holds that $\pi_i(y)=f_i^j\circ\pi_j(y)=f_i^j(x)$. It follows that $\pi_j^{-1}(x)\subset\pi_i^{-1}(f_i^j(x))$ for every $x\in X_j$. Thus also $f_i^j\circ\psi_j(x)=f_i^j\circ\pi_j\circ\Psi\circ\pi_j^{-1}(x)=\pi_i\circ\Psi\circ\pi_j^{-1}(x)\subset\pi_i\circ\Psi\circ\pi_i^{-1}(f_i^j(x))=\psi_i\circ f_i^j(x)$. 
So diagram in Figure~\ref{fig:generalYe} commutes in a sense that $f_i^j\circ\psi_j(x)\subset\psi_i\circ f_i^j(x)$ for every $i<j$ and $x\in X_j$.

We want to compute the entropy of $\Psi$ in terms of (set-valued) maps $\psi_i$, $i\geq 0$. We first prove the following lemma. Recall that we assume $\mbox{diam}(X_i)=1$, $d_i$ denotes the metric on $X_i$, and the metric on $\underleftarrow{\lim}(X_i, f_i)$ is given by $d(x,y)=\sum_{i=0}^{\infty}\frac{d_i(x_i,y_i)}{2^i}$, for every $x,y\in\underleftarrow{\lim}(X_i,f_i)$. 

\begin{lemma}\label{lem:yadda}
	Let $X= \underleftarrow{\lim}(X_i, f_i) $ with each $X_i$ compact. Given $\eps>0$, there exists an $m=m(\eps)$ such that for every $i>m$ there is an $\eps_i>0$ such that if $x,y\in X$ with $d(x,y)\geq \eps$, then $d_i(x_i,y_i)\geq \eps_i$.
\end{lemma}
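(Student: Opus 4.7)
The plan is to exploit the tail decay of the defining series for $d$ to localize the discrepancy between $x$ and $y$ in finitely many early coordinates, and then use uniform continuity of the bonding compositions $f_j^i$ to propagate a lower bound up to any given coordinate $i > m$.

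First I would choose $m = m(\eps)$ large enough that the tail contribution is small, namely
\[
\sum_{i = m+1}^{\infty} \frac{1}{2^i} < \frac{\eps}{2}.
\]
Since each $d_i$ is bounded by $\diam(X_i) = 1$, this forces
\[
\sum_{i=0}^{m} \frac{d_i(x_i, y_i)}{2^i} > \frac{\eps}{2}
\]
whenever $d(x,y) \geq \eps$. Using $\sum_{i=0}^{m} 2^{-i} < 2$, a simple averaging argument gives some index $j = j(x,y) \in \{0, 1, \ldots, m\}$ for which $d_j(x_j, y_j) \geq \eps/4$. The index $j$ depends on $x, y$, but crucially it ranges over the \emph{finite} set $\{0, \ldots, m\}$.

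Next, fix any $i > m$. For each $j \leq m$ the composition $f_j^i = f_{j+1} \circ \cdots \circ f_i \colon X_i \to X_j$ is continuous on a compact space, hence uniformly continuous. Thus for every $j \in \{0, \ldots, m\}$ there exists $\delta_{j,i} > 0$ such that
\[
d_i(a,b) < \delta_{j,i} \quad \Longrightarrow \quad d_j(f_j^i(a), f_j^i(b)) < \frac{\eps}{4}.
\]
Set $\eps_i := \min_{0 \leq j \leq m} \delta_{j,i} > 0$. If $x, y \in X$ satisfy $d(x,y) \geq \eps$ but $d_i(x_i, y_i) < \eps_i$, then for every $j \leq m$ we would have
\[
d_j(x_j, y_j) = d_j(f_j^i(x_i), f_j^i(y_i)) < \frac{\eps}{4},
\]
contradicting the existence of a coordinate $j \leq m$ with $d_j(x_j, y_j) \geq \eps/4$. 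Hence $d_i(x_i, y_i) \geq \eps_i$, as required.

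There is not really a substantive obstacle here: the only care needed is to arrange the two quantifiers in the right order, so that $m$ is chosen from $\eps$ alone (tail estimate, independent of $i$), while $\eps_i$ is then chosen from the uniform continuity moduli of the \emph{finitely many} maps $f_0^i, f_1^i, \ldots, f_m^i$. Compactness of $X_i$ is exactly what is used to make these moduli exist.
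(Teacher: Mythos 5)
Your proof is correct and follows essentially the same route as the paper's: choose $m$ from the tail estimate $\sum_{i=m+1}^{\infty}2^{-i}<\eps/2$, extract a coordinate $j\le m$ with $d_j(x_j,y_j)\ge\eps/4$ by averaging, and then take $\eps_i$ as the minimum of the uniform continuity moduli of the finitely many maps $f_0^i,\ldots,f_m^i$ (the paper phrases the uniform-continuity step in contrapositive form, you in direct form, which is the same thing). No gaps.
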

\begin{proof} Given $\eps>0$, there exists $m$ such that $\sum_{i=m+1}^{\infty}\frac{1}{2^i}<\eps/2$. It follows that if $d(x,y)\geq \eps$, then $\sum_{i=0}^{m}\frac{d_i(x_i,y_i)}{2^i}\geq \eps/2$. Thus, it follows that there exists $k\in\{0,...,m\}$ such that $d_k(x_k,y_k)\geq \eps/4$. Note that, given any  $k\in\{0,...,m\}$ and $i>m$, it follows from uniform continuity of $f_k^i$ that there exists $\delta_i^k>0$ such that if $d_k(x_k,y_k)=d_k(f_k^i(x_i),f_k^i(y_i))\geq \eps/4$ then $d_i(x_i,y_i)\geq \delta_i^k$. For $\eps_i:=\min\{\delta_i^k: k\in \{0,...,m\}\}$ it holds that $\delta_i^k\geq \eps_i$ for every $k\in\{0,\ldots,m\}$, and we have the prescribed value.
\end{proof}

\begin{proposition}
	Let $\Psi\colon\underleftarrow{\lim}(X_i, f_i)\to\underleftarrow{\lim}(X_i, f_i)$ be a continuous function. For $i\geq 0$ define (possibly set-valued functions) $\psi_i\colon X_i\to X_i$ as $\psi_i(x)=\pi_i\circ\Psi\circ\pi_i^{-1}(x)$. If $(\xi_1,\xi_2,...,\xi_n)$ is an orbit of $\Psi$, then  $(\pi_i(\xi_1),\pi_i(\xi_2),...,\pi_i(\xi_n))$ is an orbit of $\psi_i$.
\end{proposition}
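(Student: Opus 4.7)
The plan is to unwind definitions and verify the orbit condition coordinatewise. Recall that for a single-valued map $\Psi$, the definition of an $n$-orbit specializes to $\xi_{k+1}=\Psi(\xi_k)$ for $k=1,\ldots,n-1$, while for the set-valued $\psi_i$ we must check $\pi_i(\xi_{k+1})\in\psi_i(\pi_i(\xi_k))$ for each such $k$.

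First I would fix $k\in\{1,\ldots,n-1\}$ and set $x:=\pi_i(\xi_k)\in X_i$. The key trivial observation is that $\xi_k$ itself lies in $\pi_i^{-1}(x)$, simply because $\pi_i(\xi_k)=x$. Applying $\Psi$ then gives $\Psi(\xi_k)\in\Psi(\pi_i^{-1}(x))$, and applying $\pi_i$ yields
$$\pi_i(\xi_{k+1})=\pi_i(\Psi(\xi_k))\in\pi_i\bigl(\Psi(\pi_i^{-1}(x))\bigr)=\pi_i\circ\Psi\circ\pi_i^{-1}(x)=\psi_i(x)=\psi_i(\pi_i(\xi_k)).$$
Since this holds for every $k$, the tuple $(\pi_i(\xi_1),\ldots,\pi_i(\xi_n))$ satisfies the defining condition of an $n$-orbit of $\psi_i$.

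There is essentially no obstacle here: the proposition is a direct consequence of the tautology $\xi_k\in\pi_i^{-1}(\pi_i(\xi_k))$ combined with the definition $\psi_i(x)=\pi_i\circ\Psi\circ\pi_i^{-1}(x)$. The only subtlety worth noting explicitly is that the inclusion (rather than equality) is exactly what makes the set-valued formulation of orbit the correct one: even though $\Psi$ is single-valued, the fiber $\pi_i^{-1}(x)$ is in general large, so $\psi_i(x)$ may contain many points besides $\pi_i(\Psi(\xi_k))$, and one must not expect equality $\pi_i(\xi_{k+1})=\psi_i(\pi_i(\xi_k))$. The argument makes no use of continuity of $\Psi$ or of the upper semi-continuity of $\psi_i$; those properties become relevant only when one wants to compare entropies, as in the subsequent Theorem~3.4.
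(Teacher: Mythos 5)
Your proof is correct and is essentially identical to the paper's: both rest on the tautology $\xi_k\in\pi_i^{-1}(\pi_i(\xi_k))$, followed by applying $\Psi$ and then $\pi_i$ to obtain $\pi_i(\xi_{k+1})\in\psi_i(\pi_i(\xi_k))$. The closing remarks about inclusion versus equality and the irrelevance of continuity are accurate but not needed.
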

\begin{proof} Let $k\in\{1,\ldots, n-1\}$, and notice that $\xi_k\in \pi_i^{-1}\circ \pi_i(\xi_k)$. Hence, $\xi_{k+1}=\Psi(\xi_k)\in \Psi\circ \pi_i^{-1}\circ \pi_i(\xi_k)$. Thus, $\pi_i(\xi_{k+1})=\pi_i\circ\Psi(\xi_k)\in \pi_i\circ\Psi\circ \pi_i^{-1}\circ \pi_i(\xi_k)=\psi_i(\pi_i(\xi_k))$.  We conclude that $(\pi_i(x_1),\pi_i(x_2),...,\pi_i(x_k),\pi_i(x_{k+1}))$ is an orbit of $\psi_i$.
\end{proof}

For $S\subset \mbox{Orb}_n(\Psi)$, denote by $\pi_i(S)=\{(\pi_i(\xi_1),\pi_i(\xi_2),...,\pi_i(\xi_n)): (\xi_1, \xi_2,..., \xi_n)\in  S\}$.

\begin{proposition}\label{prop:yadda2}
	Let $\Psi\colon\underleftarrow{\lim}(X_i, f_i)\to\underleftarrow{\lim}(X_i, f_i)$ be a continuous function. For $i\geq 0$ define (possibly set-valued functions) $\psi_i\colon X_i\to X_i$ as $\psi_i(x)=\pi_i\circ\Psi\circ\pi_i^{-1}(x)$. If $S$ is an $(n,\eps)$-separated set for $\Psi$, then there exists $m$ such that for for all $i>m$, there exists an $\eps_i>0$ such that $\pi_i(S)$ is $(n,\eps_i)$-separated. Furthermore, $|S|=|\pi_i(S)|$.
\end{proposition}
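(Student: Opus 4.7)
The plan is to apply Lemma~\ref{lem:yadda} coordinatewise across each pair of orbits in $S$. Fix an $(n,\eps)$-separated set $S\subset\mathrm{Orb}_n(\Psi)$ and invoke Lemma~\ref{lem:yadda} with this $\eps$ to obtain the threshold index $m=m(\eps)$ and, for each $i>m$, a constant $\eps_i>0$ with the property that $d(x,y)\geq\eps$ forces $d_i(x_i,y_i)\geq\eps_i$. This is the only input from the inverse limit structure that I will need.

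Next, I would take two distinct orbits $(\xi_1,\ldots,\xi_n)$ and $(\eta_1,\ldots,\eta_n)$ in $S$. By the $(n,\eps)$-separation hypothesis there is some coordinate $k\in\{1,\ldots,n\}$ with $d(\xi_k,\eta_k)>\eps$, so the lemma gives $d_i(\pi_i(\xi_k),\pi_i(\eta_k))\geq \eps_i$. The preceding proposition already guarantees that the images $(\pi_i(\xi_1),\ldots,\pi_i(\xi_n))$ and $(\pi_i(\eta_1),\ldots,\pi_i(\eta_n))$ are genuine elements of $\mathrm{Orb}_n(\psi_i)$, so it makes sense to call $\pi_i(S)$ separated for $\psi_i$. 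To match the strict inequality in the definition of separation, I would just shrink slightly, e.g.\ set $\eps_i':=\eps_i/2$, so that $\pi_i(S)$ is $(n,\eps_i')$-separated.

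The cardinality equality $|S|=|\pi_i(S)|$ then comes for free: distinct orbits in $S$ produce projected orbits that differ in at least one coordinate by at least $\eps_i>0$, so the coordinatewise projection induced by $\pi_i$ is injective on $S$. I do not anticipate any real obstacle here; the argument is essentially a direct lift of Lemma~\ref{lem:yadda} from pairs of points in $\underleftarrow{\lim}(X_i,f_i)$ to pairs of $n$-orbits, with the only minor bookkeeping being the passage from the non-strict inequality output by the lemma to the strict inequality in the definition of an $(n,\eps)$-separated set.
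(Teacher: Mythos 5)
Your proposal is correct and follows essentially the same route as the paper's proof: invoke Lemma~\ref{lem:yadda} to get $m$ and the constants $\eps_i$, use the separation of $S$ to find a coordinate $k$ with $d(\xi_k,\eta_k)>\eps$, conclude $d_i(\pi_i(\xi_k),\pi_i(\eta_k))\geq\eps_i$, and deduce both the separation of $\pi_i(S)$ and the injectivity of the projection on $S$. Your additional step of shrinking to $\eps_i/2$ to convert the non-strict inequality into the strict one required by the definition of an $(n,\eps)$-separated set is a small refinement the paper glosses over, but it changes nothing of substance.
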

\begin{proof}  
	Let $S$ be $(n,\eps)$ separated for $\Psi$. Then find $m=m(\eps)$ by Lemma~\ref{lem:yadda}. Let $i>m$ and find $\eps_i>0$ also by Lemma~\ref{lem:yadda}. For distinct $(\xi_1, \xi_2,...,\xi_n),(\mu_1, \mu_2,...,\mu_n)\in S$ there exists $k\in\{1,...,n\}$ such that $d(\xi_k,\mu_k)\geq \eps$. Then it follows that  $d_i(\pi_i(\xi_k),\pi_i(\mu_k))\geq \eps_i$. So, $\pi_i(S)$ is $(n,\eps_i)$-separated. It also follows that distinct elements of $S$ are taken to distinct elements of $\pi_i(S)$, so $|S|=|\pi_i(S)|$.
\end{proof}

\begin{theorem}\label{thm:liminf}
	Let $\Psi\colon\underleftarrow{\lim}(X_i, f_i)\to\underleftarrow{\lim}(X_i, f_i)$ be a continuous function. For $i\geq 0$ define (possibly set-valued functions) $\psi_i\colon X_i\to X_i$ as $\psi_i(x)=\pi_i\circ\Psi\circ\pi_i^{-1}(x)$. Then $\Ent(\Psi)\leq \liminf_{i\rightarrow\infty}\Ent(\psi_i)$.
\end{theorem}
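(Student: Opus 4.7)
The plan is to combine Proposition~\ref{prop:yadda2} with the definition of entropy, chasing the inequality through separated sets.

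First, I would fix $\eps > 0$ and choose, for each $n \in \N$, a maximal $(n, \eps)$-separated set $S_n \subset \mbox{Orb}_n(\Psi)$, so that $|S_n| = s_{n,\eps}(\Psi)$. By Proposition~\ref{prop:yadda2}, there exists $m = m(\eps)$ such that for every $i > m$ there is $\eps_i > 0$ with the property that $\pi_i(S_n)$ is an $(n, \eps_i)$-separated orbit set for $\psi_i$, and moreover $|\pi_i(S_n)| = |S_n|$. Consequently
\[
s_{n,\eps}(\Psi) \;=\; |S_n| \;=\; |\pi_i(S_n)| \;\leq\; s_{n,\eps_i}(\psi_i)
\]
for every $n \in \N$ and every $i > m(\eps)$. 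The key point here is that $m$ and $\eps_i$ depend on $\eps$ but not on $n$, so the inequality is uniform in $n$.

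Taking logarithms, dividing by $n$, and passing to $\limsup_{n \to \infty}$ I obtain, for each fixed $i > m(\eps)$,
\[
\limsup_{n\to\infty}\frac{1}{n}\log s_{n,\eps}(\Psi) \;\leq\; \limsup_{n\to\infty}\frac{1}{n}\log s_{n,\eps_i}(\psi_i) \;\leq\; \Ent(\psi_i),
\]
where the second inequality uses that $s_{n,\delta}(\psi_i)$ is monotone non-increasing in $\delta$, so the limit over $\delta \to 0$ dominates any particular value $\delta = \eps_i$. Since this holds for all sufficiently large $i$, I can take $\liminf_{i \to \infty}$ on the right-hand side to conclude
\[
\limsup_{n\to\infty}\frac{1}{n}\log s_{n,\eps}(\Psi) \;\leq\; \liminf_{i\to\infty}\Ent(\psi_i).
\]
Finally, letting $\eps \to 0$ on the left-hand side yields $\Ent(\Psi) \leq \liminf_{i\to\infty}\Ent(\psi_i)$.

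The argument is essentially a bookkeeping exercise once Lemma~\ref{lem:yadda} and Proposition~\ref{prop:yadda2} are in hand, so there is no serious obstacle; the only subtle point, which one needs to be careful about, is that the constant $\eps_i$ controlling separation in $X_i$ depends on $\eps$ but is independent of $n$. This is exactly what Lemma~\ref{lem:yadda} provides, and it is what allows the $\limsup_n$ to be taken before one has to worry about $\eps_i \to 0$.
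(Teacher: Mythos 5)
Your proof is correct and takes essentially the same route as the paper: both hinge on Lemma~\ref{lem:yadda} and Proposition~\ref{prop:yadda2} to project maximal $(n,\eps)$-separated sets of $\Psi$ to $(n,\eps_i)$-separated orbit sets of $\psi_i$ with $\eps_i$ depending only on $\eps$ (not on $n$), which gives $s_{n,\eps}(\Psi)\leq s_{n,\eps_i}(\psi_i)$ for all $i>m(\eps)$. The only difference is presentational: the paper phrases the argument as a contradiction along a subsequence with $\Ent(\psi_{i_p})\leq c<\Ent(\Psi)$, whereas you run the same inequality directly through the definition of entropy.
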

\begin{proof} 
	
	Suppose on the contrary that there exists $c>0$ and a strictly increasing sequence of natural numbers,  $\{i_p\}_{p=1}^{\infty}$ such that $\mbox{Ent}(\psi_{i_p})\leq c< \mbox{Ent}(\Psi)$ for all $p$. Then there is an $\eps>0$ such that $\limsup_{n\rightarrow\infty}(1/n)\log(s_{n,\eps}(\Psi))>c$. For $n\in\N$, let $S_n$ be an $(n,\eps)$-separated set in $Orb_n(\Psi)$ such that $|S_n|=s_{n,\eps}(\Psi)$. It follows from Proposition~\ref{prop:yadda2} that there exist $\widehat{p}$ and $\widehat{\eps}=\eps_{i_{\widehat{p}}}>0$ such that $\widehat{S}_n=\pi_{i_{\widehat{p}}}(S_n)$ is an $(n,\widehat{\eps})$-separated set of $\psi_{i_{\widehat{p}}}$ and $|\widehat{S}_n|=|S_n|=s_{n,\eps}(\Psi)$. Now, it follows that \[\limsup_{n\to\infty}(1/n)\log(s_{n,\widehat{\eps}}(\psi_{i_{\widehat p}}))\geq \limsup_{n\rightarrow\infty}(1/n)\log(|\widehat{S}_n|)= \limsup_{n\rightarrow\infty}(1/n)\log(s_{n,\eps}(\Psi))>c.\] 
	Furthermore, note that for every $\eps'<\widehat{\eps}$ it holds that $s_{n,\eps'}(\psi_{i_{\widehat p}})\geq s_{n,\widehat{\eps}}(\psi_{i_{\widehat p}})$, so 
	$$\limsup_{n\rightarrow\infty}(1/n)\log(s_{n,\eps'}(\psi_{i_{\widehat p}}))\geq \limsup_{n\rightarrow\infty}(1/n)\log(s_{n,\widehat\eps}(\psi_{i_{\widehat p}}))>c.$$
	It follows that $\Ent(\psi_{i_{\widehat{p}}})>c$, and that is a contradiction.
\end{proof}

Specifically, we want to study the dynamical properties of the maps arising from commutative``diagonal" diagrams as in Figure~\ref{fig:diagonal}. We say that $\Psi\colon\underleftarrow{\lim}(X_i,f_i)\to\underleftarrow{\lim}(X_i, f_i)$ is a {\em diagonal map} if there exists continuous maps $g_i\colon X_i\to X_{i-1}$, $i\geq 1$, such that $g_i\circ f_{i+1}=f_i\circ g_{i+1}$ for every $i\geq 1$ (see the commutative diagram in Figure~\ref{fig:diagonal}) and such that $\Psi$ is given by 
$$\Psi((x_0, x_1, x_2, \ldots))=(g_1(x_1), g_2(x_2), g_3(x_3), \ldots).$$

We can recover the (set-valued) straight-down components by $\psi_i(x)=\pi_i\circ \Psi\circ\pi_i^{-1}(x)=g_{i+1}\circ f_{i+1}^{-1}(x)$ for $i\geq 0$, see Figure~\ref{fig:diagonalStraightDown}. With some extra assumptions (see the following proposition), we show how $\Ent(\Psi)$ relates to $\Ent(g_i^{-1}\circ f_i)$.

\begin{figure}[!ht]
	\begin{tikzpicture}[->,>=stealth',auto, scale=2]
	\node (1) at (0,1) {$X_0$};
	\node (2) at (0,0) {$X_0$};
	\node (3) at (1,1) {$X_1$};
	\node (4) at (1,0) {$X_1$};
	\node (5) at (2,1) {$X_2$};
	\node (6) at (2,0) {$X_2$};
	\node (7) at (3,1) {$X_3$};
	\node (8) at (3,0) {$X_3$};
	\node (9) at (4,1) {$\ldots$};
	\node (10) at (4,0) {$\ldots$};
	\draw [->] (3) to (1);
	\draw [->] (5) to (3);
	\draw [->] (7) to (5);
	\draw [->] (9) to (7);
	\draw [->] (4) to (2);
	\draw [->] (6) to (4);
	\draw [->] (8) to (6);
	\draw [->] (10) to (8);
	
	\draw[->, dashed] (1) to (2);
	\draw[->, dashed] (3) to (4);
	\draw[->, dashed] (5) to (6);
	\draw[->, dashed] (7) to (8);

	\node at (0.55,1.15) {\tiny $f_1$};
	\node at (1.55,1.15) {\tiny $f_2$};
	\node at (2.55,1.15) {\tiny $f_3$};
	\node at (3.55,1.15) {\tiny $f_4$};
	\node at (0.55,-0.2) {\tiny $f_1$};
	\node at (1.55,-0.2) {\tiny $f_2$};
	\node at (2.55,-0.2) {\tiny $f_3$};
	\node at (3.55,-0.2) {\tiny $f_4$};
	
	
	\node at (0.35,0.5) {\tiny $g_1\circ f_1^{-1}$};
	\node at (1.35,0.5) {\tiny $g_2\circ f_2^{-1}$};
	\node at (2.35,0.5) {\tiny $g_3\circ f_3^{-1}$};
	\node at (3.35,0.5) {\tiny $g_4\circ f_4^{-1}$};
	\end{tikzpicture}
	\caption{Recovering straight-down components of a diagonal map.}
	\label{fig:diagonalStraightDown}
\end{figure}
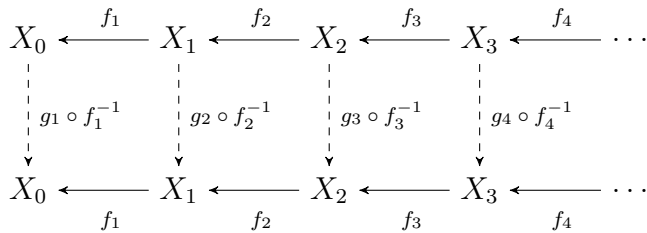

\begin{proposition}\label{prop:entdiag}
	Assume $f_i,g_i\colon X_{i+1}\to X_i$ are continuous for $i\in\N$, and assume they satisfy:
	\begin{itemize}
		\item[(i)] $g_i\circ f_{i+1}=f_i\circ g_{i+1}$, and
		\item[(ii)] $g_{i+1}\circ f_{i+1}^{-1}=f_i^{-1}\circ g_i$, $i\in\N$.
	\end{itemize}
	Let $X=\underleftarrow{\lim}(X_i,f_i)$ and  $\Psi\colon X\to X$ be the diagonal map $$\Psi((x_0,x_1,x_2,\ldots))=(g_1(x_1),g_2(x_2),g_3(x_3),\ldots).$$
	Then $\Ent(g_i\circ f_{i}^{-1})\leq \Ent(\Psi)$ for every $i\in\N$. In particular, it follows that $$\Ent(\Psi)=\lim_{i\to\infty}\Ent(g_i\circ f_i^{-1}).$$
\end{proposition}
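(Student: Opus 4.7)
The plan is to establish the reverse inequality $\Ent(g_i\circ f_i^{-1})\le\Ent(\Psi)$ for every fixed $i\in\N$; combined with Theorem~\ref{thm:liminf}, which gives $\Ent(\Psi)\le\liminf_i\Ent(g_i\circ f_i^{-1})$, this squeezes $\limsup_i\Ent(g_i\circ f_i^{-1})\le\Ent(\Psi)\le\liminf_i\Ent(g_i\circ f_i^{-1})$ and forces the limit to exist and to equal $\Ent(\Psi)$.

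The heart of the argument is a lifting lemma: every $n$-orbit $(y_0,\ldots,y_{n-1})$ of $\psi_i:=g_i\circ f_i^{-1}$ in $X_i$ is realized as $y_k=\pi_i(\Psi^k\xi)$ for some $\xi=(x_j)_{j\ge 0}\in X$. Unwinding, $\pi_i(\Psi^k\xi)=g_i\circ g_{i+1}\circ\cdots\circ g_{i+k-1}(x_{i+k})$, so I would set $x_i=y_0$ and then, inductively for $k=1,\ldots,n-1$, choose a coordinate $x_{i+k}\in f_{i+k-1}^{-1}(x_{i+k-1})$ satisfying $g_i\circ\cdots\circ g_{i+k-1}(x_{i+k})=y_k$. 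The inductive step is where hypothesis (ii) does all the work: applying $g_{j+1}\circ f_{j+1}^{-1}=f_j^{-1}\circ g_j$ successively for $j=i+k-2,i+k-3,\ldots,i$ telescopes the set-valued composition
\[
g_i\circ\cdots\circ g_{i+k-1}\bigl(f_{i+k-1}^{-1}(x_{i+k-1})\bigr)=g_i\bigl(f_i^{-1}\bigl(g_i\circ\cdots\circ g_{i+k-2}(x_{i+k-1})\bigr)\bigr)=g_i(f_i^{-1}(y_{k-1}))=\psi_i(y_{k-1}),
\]
using the inductive hypothesis $g_i\circ\cdots\circ g_{i+k-2}(x_{i+k-1})=y_{k-1}$ in the second equality; this set contains $y_k$ by the orbit property, so any preimage witness supplies the required $x_{i+k}$. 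Once $x_{i+n-1}$ has been chosen, extend $\xi$ to a full inverse-limit point via surjectivity of the bonding maps on the tail $x_{i+n},x_{i+n+1},\ldots$; the earlier coordinates $x_0,\ldots,x_{i-1}$ are uniquely determined from $x_i$ by the $f_j$.

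With the lifting at hand, entropy transfers: if $\{(y_0^{(\alpha)},\ldots,y_{n-1}^{(\alpha)})\}_\alpha$ is an $(n,\eps)$-separated subset of $\mathrm{Orb}_n(\psi_i)$ of maximal cardinality $s_{n,\eps}(\psi_i)$, the corresponding lifted $\Psi$-orbits are pairwise distinct and, via the inequality $d(x,y)\ge d_i(x_i,y_i)/2^i$ for the product metric on $X$, form an $(n,\eps/2^i)$-separated subset of $\mathrm{Orb}_n(\Psi)$. Hence $s_{n,\eps/2^i}(\Psi)\ge s_{n,\eps}(\psi_i)$; passing to $(1/n)\limsup_n$ and then $\eps\to 0$ yields $\Ent(\Psi)\ge\Ent(\psi_i)$, closing the sandwich.

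The main obstacle is the iterated collapse via (ii) in the inductive step: keeping the nested set-valued compositions and their indices straight, and verifying that each application of (ii) cleanly feeds the next round of the induction. The metric estimate, invocation of Theorem~\ref{thm:liminf}, and tail extension via surjectivity are then routine bookkeeping.
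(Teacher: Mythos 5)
Your proposal is correct and takes essentially the same route as the paper's proof: lift each $n$-orbit of $g_i\circ f_i^{-1}$ to a point $\xi$ of the inverse limit whose $\Psi$-orbit projects onto it under $\pi_i$ (with hypothesis (ii) guaranteeing the lift exists), transfer $(n,\eps)$-separated sets to $(n,\eps/2^i)$-separated sets of $\Psi$ via $d(x,y)\geq d_i(x_i,y_i)/2^i$, and close the sandwich with Theorem~\ref{thm:liminf}. The only difference is bookkeeping: the paper builds a two-dimensional array of auxiliary points $y_k^j$ by applying (ii) one step at a time, whereas you pick the coordinates $x_{i+k}$ of $\xi$ directly and justify each choice by telescoping (ii) down to $g_i\circ f_i^{-1}(y_{k-1})$ --- the intermediate values in your telescoped composition are exactly the paper's diagonal entries, so the constructions coincide.
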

\begin{proof}
	Let $i\in\N$, $n\in\N$, and let $(x_1,x_2,\ldots,x_n)\in Orb_n(g_i\circ f_i^{-1})$ (specifically, $x_k\in X_{i-1}$ for every $1\leq k\leq n$). We will first show that there exists $\xi\in\underleftarrow{\lim}(X_i,f_i)$ such that $\pi_i(\Psi^k(\xi))=x_{k+1}$, for $0\leq k<n$. Since $x_{k+1}\in g_i\circ f_i^{-1}(x_k)$ for $1\leq k<n$, there exists $y_k^1\in X_{i+1}$ such that $f_i(y_k^1)=x_k$, and $g_i(y_k^1)=x_{k+1}$, see Figure~\ref{fig:construction}. Furthermore, we have $y_{k+1}^1\in f_i^{-1}\circ g_i(y_k^1)=g_{i+1}\circ f_{i+1}^{-1}(y_k^1)$, so there is $y_k^2\in X_{i+2}$ such that $f_{i+1}(y_k^2)=y_k^1$, and $g_{i+1}(y_k^2)=y_{k+1}^2$, see Figure~\ref{fig:construction}. We continue inductively and find $$\xi=(f_1\circ\ldots\circ f_{i-1}(x_1),\ldots, f_{i-1}(x_1), x_1, y_1^1, y_1^2, \ldots, y_1^{n-1}, \ldots)\in\underleftarrow{\lim}(X_i,f_i).$$ 
	Note that $\pi_i(\xi)=x_1$, and $\pi_i(\Psi^k(\xi))=g_i\circ g_{i+1}\circ\ldots\circ g_{i+k-1}(y_1^k)=x_{k+1}$ for all $1\leq k<n$.
	
	Now let $\{(x_1^1,x_2^1,\ldots x_n^1), (x_1^2,x_2^2,\ldots x_n^2), \ldots, (x_1^k,x_2^k,\ldots x_n^k)\}\subset Orb_n(g_i\circ f_i^{-1})$ be an $(n,\eps)$-separated set. Let $\{\xi_1, \xi_2, \ldots, \xi_k\}\subset\underleftarrow{\lim}(X_i,f_i)$ be a corresponding set of points in the inverse limit as constructed above. Note that
	$\{(\xi_1, \Psi(\xi_1), \ldots, \Psi^{n-1}(\xi_1)),\\ (\xi_2, \Psi(\xi_2), \ldots, \Psi^{n-1}(\xi_2)), \ldots, (\xi_k, \Psi(\xi_k), \ldots, \Psi^{n-1}(\xi_k))\}\subset Orb_n(\Psi)$ is then an $(n,\eps/2^i)$-separated set of $\Psi$. Thus $s_{n,\eps}(g_i\circ f_i^{-1})\leq s_{n,\eps/2^i}(\Psi)$, which implies $\Ent(g_i\circ f_i^{-1})\leq \Ent(\Psi)$. Theorem~\ref{thm:liminf} finishes the proof.
\end{proof}

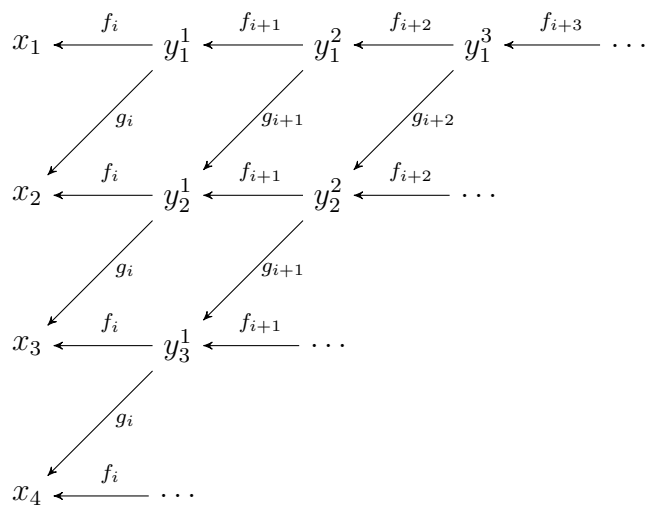
\begin{figure}[!ht]
	\begin{tikzpicture}[->,>=stealth',auto, scale=2]
	\node (13) at (0,0) {$x_4$};
	\node (14) at (1,0) {$\ldots$};
	
	\node (10) at (0,1) {$x_3$};
	\node (11) at (1,1) {$y_3^1$};
	\node (12) at (2,1) {$\ldots$};
	
	\node (6) at (0,2) {$x_2$};
	\node (7) at (1,2) {$y_2^1$};
	\node (8) at (2,2) {$y_2^2$};
	\node (9) at (3,2) {$\ldots$};
	
	\node (1) at (0,3) {$x_1$};
	\node (2) at (1,3) {$y_1^1$};
	\node (3) at (2,3) {$y_1^2$};
	\node (4) at (3,3) {$y_1^3$};
	\node (5) at (4,3) {$\ldots$};
	
	\draw[->] (2)--(1);
	\draw[->] (3)--(2);
	\draw[->] (4)--(3);
	\draw[->] (5)--(4);
	\draw[->] (2)--(6);
	\draw[->] (3)--(7);
	\draw[->] (4)--(8);
	\draw[->] (7)--(6);
	\draw[->] (8)--(7);
	\draw[->] (9)--(8);
	\draw[->] (7)--(10);
	\draw[->] (8)--(11);
	\draw[->] (11)--(10);
	\draw[->] (12)--(11);
	\draw[->] (11)--(13);
	\draw[->] (14)--(13);

	\node at (0.55,3.15) {\tiny $f_i$};
	\node at (1.55,3.15) {\tiny $f_{i+1}$};
	\node at (2.55,3.15) {\tiny $f_{i+2}$};
	\node at (3.55,3.15) {\tiny $f_{i+3}$};
	\node at (0.55,2.15) {\tiny $f_i$};
	\node at (1.55,2.15) {\tiny $f_{i+1}$};
	\node at (2.55,2.15) {\tiny $f_{i+2}$};
	\node at (0.55,1.15) {\tiny $f_i$};
	\node at (1.55,1.15) {\tiny $f_{i+1}$};
	\node at (0.55,0.15) {\tiny $f_i$};
	
	\node at (0.65,2.5) {\tiny $g_i$};
	\node at (1.7,2.5) {\tiny $g_{i+1}$};
	\node at (2.7,2.5) {\tiny $g_{i+2}$};
	\node at (0.65,1.5) {\tiny $g_i$};
	\node at (1.7,1.5) {\tiny $g_{i+1}$};
	\node at (0.65,0.5) {\tiny $g_i$};
	\end{tikzpicture}
	\caption{Construction of $y_k^j\in X_{i+j}$ in the proof of Proposition~\ref{prop:entdiag}.}
	\label{fig:construction}
\end{figure} 

In particular, if we are given two commutative maps $f,g\colon I\to I$, such that $g\circ f^{-1}=f^{-1}\circ g$, then for the diagonal map $\Psi\colon\underleftarrow{\lim}(I,f)\to\underleftarrow{\lim}(I,f)$, given by $\psi((x_0,x_1,x_2,\ldots))=(g(x_1),g(x_2),\ldots)$, we have
$$\Ent(\Psi)=\Ent(g\circ f^{-1}).$$

\section{Entropy of $F_n\circ F_m^{-1}=F_m^{-1}\circ F_n$}\label{sec:main}

For $n\geq 2$ denote by $F_n\colon I\to I$ an open map with $n-1$ critical points. Note that $\Ent(F_n)=\log(n)$ (see \cite{MisSl}). In this section we prove that if $F_n\circ F_m^{-1}=F_m^{-1}\circ F_n$, then $\Ent(F_n\circ F_m^{-1})=\max\{\log(n),\log(m)\}=\max\{\Ent(F_n),\Ent(F_m)\}$.

For example, denote by $T_n$ the symmetric tent map with $n-1$ critical points. That is, define $T_n(i/n)=0$ if $i\in\{0,\ldots,n\}$ is even, and $T_n(i/n)=1$ if $i\in\{0,\ldots,n\}$ is odd, and extend linearly. In \cite[Propositions~3.3 and 3.4]{AM-stronglycomm} we show that $T_m^{-1}\circ T_n=T_n\circ T_m^{-1}$ if and only if $n$ and $m$ are relatively prime. See the graphs of some $T_m^{-1}\circ T_n$ in Figure~\ref{fig:tmtn}.

\begin{figure}[!ht]
	\centering
	\begin{tikzpicture}[scale=0.5]
	\draw[thick] (0,0)--(0,8)--(8,8)--(8,0)--(0,0);
	\draw[thick] (0,0)--(8,16/3)--(4,8)--(0,16/3)--(8,0);
	\node at (4,9) {\small $T_3^{-1}\circ T_2=T_2\circ T_3^{-1}$};
	\end{tikzpicture}
	\begin{tikzpicture}[scale=0.5]
	\draw[thick] (0,0)--(0,8)--(8,8)--(8,0)--(0,0);
	\draw[thick] (0,0)--(8,24/5)--(8/3,8)--(0,32/5)--(8,8/5)--(16/3,0)--(0,16/5)--(8,8);
	\node at (4,9) {\small $T_5^{-1}\circ T_3=T_3\circ T_5^{-1}$};
	\end{tikzpicture}
	\begin{tikzpicture}[scale=0.5]
	\draw[thick] (0,0)--(0,8)--(8,8)--(8,0)--(0,0);
	\draw[thick] (0,0)--(16/3,8)--(8,4)--(16/3,0)--(0,8);
	\draw[thick] (8,0)--(8/3,8)--(0,4)--(8/3,0)--(8,8);
	\node at (4,9) {\small $T_4^{-1}\circ T_6\neq T_6\circ T_4^{-1}$};
	\end{tikzpicture}
	\caption{Graphs of some $T_m^{-1}\circ T_n$, where $T_n$ denotes the symmetric tent map with $n-1$ critical points. Note that $T_4^{-1}\circ T_6\neq T_6\circ T_4^{-1}=T_3\circ T_2^{-1}$.}
	\label{fig:tmtn}
\end{figure}
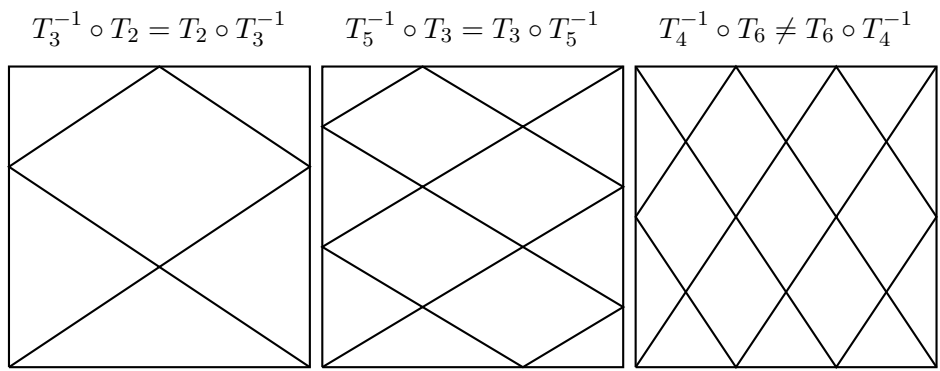

\begin{definition}
	Let $\Psi\colon X\to X$ be a set valued map and $N\in\N$. We say that $\Psi$ has an {\em $N$-horseshoe} if there exist disjoint closed sets $A_1, A_2, \ldots, A_N\subset X$ such that 
	$\cup_{i=1}^NA_i\subset \cap_{i=1}^N\Psi(A_i)$.
\end{definition}

\begin{proposition}\label{prop:horseshoes}
	If a set valued map $\Psi\colon X\to X$ has an $N$-horseshoe, then $\Ent(\Psi)\geq \log(N).$
\end{proposition}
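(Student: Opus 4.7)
The plan is to exhibit an $(n,\eps)$-separated set of size $N^n$ in $Orb_n(\Psi)$ for a uniform $\eps>0$, which gives $s_{n,\eps}(\Psi)\ge N^n$ and hence $\Ent(\Psi)\ge\log N$ directly from the definition.

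First I would choose $\eps$. Since $A_1,\ldots,A_N$ are pairwise disjoint closed subsets of the compact space $X$, the number
\[
\eps := \tfrac{1}{2}\min\{\dist(A_i,A_j) : 1\le i<j\le N\}
\]
is strictly positive. By construction, any two points lying in different sets $A_i\ne A_j$ satisfy $d(x,y)>\eps$.

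Next, for each word $\mathbf{i}=(i_1,i_2,\ldots,i_n)\in\{1,\ldots,N\}^n$ I would produce an orbit $(x_1^{\mathbf i},\ldots,x_n^{\mathbf i})\in Orb_n(\Psi)$ with $x_k^{\mathbf i}\in A_{i_k}$ for every $k$. The construction is backward: pick any $x_n^{\mathbf i}\in A_{i_n}$; then use the horseshoe hypothesis $\bigcup_j A_j\subset\bigcap_j\Psi(A_j)$, which in particular gives $A_{i_n}\subset\Psi(A_{i_{n-1}})$, to find $x_{n-1}^{\mathbf i}\in A_{i_{n-1}}$ with $x_n^{\mathbf i}\in\Psi(x_{n-1}^{\mathbf i})$; and iterate downwards to $k=1$. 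This uses only the inclusion $A_{i_{k+1}}\subset\Psi(A_{i_k})$, which follows from the horseshoe condition.

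Finally, I would verify separation. If $\mathbf i\ne\mathbf j$, there is some coordinate $k$ with $i_k\ne j_k$, so $x_k^{\mathbf i}\in A_{i_k}$ and $x_k^{\mathbf j}\in A_{j_k}$ lie in different $A$'s, giving $d(x_k^{\mathbf i},x_k^{\mathbf j})>\eps$. Hence the $N^n$ orbits indexed by the words $\mathbf i$ form an $(n,\eps)$-separated subset of $Orb_n(\Psi)$, and
\[
\Ent(\Psi)\ge \lim_{\eps'\to 0}\limsup_{n\to\infty}\tfrac{1}{n}\log s_{n,\eps'}(\Psi) \ge \limsup_{n\to\infty}\tfrac{1}{n}\log N^n = \log N.
\]

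There is no real obstacle: everything hinges on the observation that $\bigcup_i A_i\subset\bigcap_i\Psi(A_i)$ is exactly what is needed to realize every finite word in the alphabet $\{1,\ldots,N\}$ as the itinerary of an actual $\Psi$-orbit, which is the set-valued analogue of the classical single-valued horseshoe argument. The only mild point to be careful about is that the orbit must be constructed backward (not forward), because the condition provides preimages in each $A_i$ rather than images into a chosen $A_j$.
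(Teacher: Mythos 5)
Your proof is correct, and it takes a genuinely cleaner route than the paper's. Both arguments rest on the same counting idea: realize every word in $\{1,\ldots,N\}^n$ as the itinerary of an actual $n$-orbit, use the positive mutual distance of the disjoint closed sets $A_i$ as a uniform separation constant, and conclude $s_{n,\eps}(\Psi)\geq N^n$. The difference is in how the orbits are produced. The paper constructs them \emph{forward}, via nested sets $A_{i_1,\ldots,i_k}=\{x\in A_{i_1,\ldots,i_{k-1}}: \Psi^{k-1}(x)\cap A_{i_k}\neq\emptyset\}$, picking $x_1$ in the deepest set and then $x_{j+1}\in\Psi^j(x_1)\cap A_{i_{j+1}}$; as literally written this requires an extra word of care, because $x_{j+1}\in\Psi^j(x_1)$ by itself does not guarantee $x_{j+1}\in\Psi(x_j)$ for the previously chosen $x_j$ (for set-valued maps one must select the points consistently along a single branch of the tree of orbits, which the nested sets do permit). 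Your \emph{backward} recursion sidesteps this bookkeeping entirely: the horseshoe inclusion $A_{i_{k+1}}\subset\Psi(A_{i_k})$ hands you, for an already-chosen $x_{k+1}\in A_{i_{k+1}}$, a genuine predecessor $x_k\in A_{i_k}$ with $x_{k+1}\in\Psi(x_k)$, so the string is an orbit by construction -- exactly the point you flag at the end. What the paper's version buys is the explicit family of cylinder-like sets, the natural set-valued analogue of the classical Markov/horseshoe coding, which could be reused elsewhere; what yours buys is brevity and the elimination of the consistency issue. Your closing limit computation is also fine, since $s_{n,\eps'}(\Psi)\geq s_{n,\eps}(\Psi)\geq N^n$ for every $\eps'\leq\eps$.
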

\begin{proof} Let $A_1$, $A_2$, $\ldots, A_N$ be an $N$-horseshoe for $\Psi$. For $i_1, i_2\in\{1, \ldots N\}$ define $A_{i_1,i_2}=\{x\in A_{i_1}: \Psi(x)\cap A_{i_2}\not = \emptyset\}$. Since $A_{i_2}\subset \Psi(A_{i_1})$, it follows that $A_{i_1,i_2}\neq\emptyset$, and also $A_{i_2}\subset \Psi(A_{i_1,i_2})$. 
	
	Continuing inductively, let $n\geq 1$ and suppose that $A_{i_1,...,i_n}$ has been found such that $A_{i_n}\subset\Psi^{n-1}(A_{i_1,...,i_n})$. For $i_{n+1}\in\{1, \ldots, N\}$ define $A_{i_1,...,i_n,i_{n+1}}=\{x\in A_{i_1,...,i_n}: \Psi^{n}(x)\cap A_{i_{n+1}}\not = \emptyset\}$. Since $A_{i_{n+1}}\subset \Psi(A_{i_n})\subset \Psi^{n}(A_{i_1,...,i_n})$, it follows that $A_{i_1,\ldots, i_n}\neq\emptyset$, and also $A_{i_{n+1}}\subset \Psi^n(A_{i_1,...,i_{n+1}})$.
	
	Fix $n\in\N$ and let $\eps<\min\{d_X(A_i,A_j): i, j\in\{1, \ldots, N\}, i\neq j\}$, where $d_X$ denotes the metric on $X$. Note that for every $(i_1, \ldots, i_n)\in\{1, \ldots, N\}^n$, we can find $(x_1, \ldots, x_n)\in Orb_n(\Psi)$ such that $x_j\in A_{i_j}$ for every $j\in\{1, \ldots, n\}$. We just need to pick $x_1\in A_{i_1, \ldots, i_n}\neq\emptyset$. Then since $x_1\in A_{i_1}\cap A_{i_1, i_2}\cap \ldots\cap A_{i_1, \ldots, i_{n}}$, we know that $\Psi^j(x_1)\cap A_{i_{j+1}}\neq\emptyset$ for every $j\in\{1, \ldots, n-1\}$, so we can pick $x_{j+1}\in \Psi^j(x_1)\cap A_{i_{j+1}}$. Denote the constructed orbit $(x_1, \ldots, x_n)\in Orb_n(\Psi)$ by $x(i_1, \ldots i_n)$ and note that $S=\{x(i_1, \ldots, i_n): (i_1, \ldots, i_n)\in\{1, \ldots, N\}^n\}$ is $(n, \eps)$-separated. So $s_{n, \eps}(\Psi)\geq N^n$ and we conclude $\Ent(\Psi)=\lim_{\eps\to 0}\limsup_{n\to\infty}\frac{1}{n}\log(s_{n,\eps}(\Psi))\geq \log(N).$
\end{proof}

Note that for single-valued maps $f$ it holds $\Ent(f^n)=n\Ent(f)$ for every $n\in\N$. However, if $F$ is set-valued, that is no longer be the case, see the following example.

\begin{example}
	Denote by $T_2\colon I\to I$ the symmetric tent map, \ie $T_2(x)=\min\{2x, 2(1-x)\}$ for all $x\in I$. Define set-valued map $F\colon I\to I$ as $F(x)=T_2^{-1}\circ T_2(x)$ for all $x\in I$. Define $A_1=[0,1/3]$ and $A_2=[2/3,1]$ and note that $F(A_1)=F(A_2)=A_1\cup A_2$, thus $A_1, A_2$ is a $2$-horseshoe for $F$ and it follows that $\Ent(F)\geq \log 2$. Since $F^n=F$ for all $n\in\N$ it follows that $\Ent(F^n)=\Ent(F)<n\Ent(F)$ for all $n\geq 2$.
\end{example}

However, we have the following relation between $\Ent(F)$ and $\Ent(F^n)$:

\begin{proposition}\label{prop:entIter}
	Let $F\colon X\to X$ be a set-valued function. Then
	$$\Ent(F)\geq \sup_{n\in\N} \frac {\Ent(F^n)}{n}.$$
\end{proposition}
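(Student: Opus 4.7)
The strategy is to show that any length-$k$ orbit of $F^n$ can be interpolated into a length-$kn$ orbit of $F$ in a way that preserves separation. Fix $n\in\N$. Given $(x_1,\ldots,x_k)\in Orb_k(F^n)$, the relations $x_{j+1}\in F^n(x_j)=F\circ\cdots\circ F(x_j)$ for $1\le j<k$ let us choose intermediate points $y_{j,1}\in F(x_j)$, $y_{j,\ell+1}\in F(y_{j,\ell})$ for $1\le\ell<n-1$, with $x_{j+1}\in F(y_{j,n-1})$. Since $F(x_k)\neq\emptyset$ we can also extend past $x_k$ to produce $y_{k,1},\ldots,y_{k,n-1}$ by following $F$. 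The concatenation
$$(x_1,y_{1,1},\ldots,y_{1,n-1},x_2,\ldots,x_k,y_{k,1},\ldots,y_{k,n-1})$$
lies in $Orb_{kn}(F)$ and has $x_j$ at position $(j-1)n+1$ for each $1\le j\le k$.

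Given a $(k,\eps)$-separated set $S\subset Orb_k(F^n)$, apply this construction once to each element of $S$ to obtain $\widetilde S\subset Orb_{kn}(F)$. If two elements of $S$ differ by more than $\eps$ at coordinate $j$, then the corresponding $F$-orbits in $\widetilde S$ differ by more than $\eps$ at position $(j-1)n+1$; hence $\widetilde S$ is $(kn,\eps)$-separated and $|\widetilde S|=|S|$. This yields the key inequality $s_{kn,\eps}(F)\ge s_{k,\eps}(F^n)$.

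Taking $\tfrac{1}{kn}\log(\cdot)$ and then $\limsup_{k\to\infty}$, and using that the limsup along a subsequence is bounded above by the limsup of the full sequence, we obtain
$$\limsup_{m\to\infty}\frac{1}{m}\log s_{m,\eps}(F)\ \ge\ \frac{1}{n}\limsup_{k\to\infty}\frac{1}{k}\log s_{k,\eps}(F^n).$$
Both sides are monotone nondecreasing as $\eps\to 0$, so passing to the limit preserves the inequality and gives $\Ent(F)\ge \Ent(F^n)/n$ for every $n\in\N$. Taking the supremum over $n$ proves the claim.

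There is no serious obstacle here; the only point worth flagging is that the interpolation $y_{j,\ell}$'s depend on choices, but separation is tested only at the positions $1,n+1,\ldots,(k-1)n+1$ where the original $x_j$'s are planted, so any interpolation works and injectivity of the construction on $S$ is automatic.
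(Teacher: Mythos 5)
Your proof is correct and follows essentially the same route as the paper's: extend each $k$-orbit of $F^n$ to a $kn$-orbit of $F$ (the paper says "simply extend" where you spell out the interpolation), deduce $s_{k,\eps}(F^n)\le s_{kn,\eps}(F)$, and compare limsups via the subsequence inequality. The extra care you take with the interpolation points, the tail extension past $x_k$, and the positions where separation is tested is exactly the detail the paper leaves implicit.
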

\begin{proof}
	Fix $k\in\N$ and note that for any $(n,\eps)$-separated set for $F^k$ there exists an $(nk,\eps)$-separated set for $F$, simply extend an
	 $n$-orbit of $F^k$ to $nk$-orbit of $F$. It follows that $s_{n,\eps}(F^k)\leq s_{nk,\eps}(F)$. So we have 
	\begin{equation*}
	\begin{split}
	\Ent(F)&=\lim_{\eps\to 0}\limsup_{n\to\infty} \frac{\log(s_{n,\eps}(F))}{n}\geq \lim_{\eps\to 0}\limsup_{n\to\infty}\frac{\log(s_{nk,\eps}(F))}{nk}\\ 
	&\geq \lim_{\eps\to 0}\limsup_{n\to\infty}\frac{\log(s_{n,\eps}(F^k))}{nk}=\frac 1k\Ent(F^k),
	\end{split}
	\end{equation*}
	for every $k\in\N$.
\end{proof}

\begin{proposition}\label{prop:lowerbound}
	Let $F_n\circ F_m^{-1}=F_m^{-1}\circ F_n$. Then $\Ent(F_n\circ F_m^{-1})\geq\max\{\log n, \log m\}$.
\end{proposition}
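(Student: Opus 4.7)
My plan is to produce, for every $k\ge 1$, horseshoes for $\Psi^k$ and $(\Psi^{-1})^k$ of sizes comparable to $n^k$ and $m^k$ respectively, and then invoke Propositions~\ref{prop:horseshoes} and~\ref{prop:entIter} together with the identity $\Ent(\Psi)=\Ent(\Psi^{-1})$ to obtain the two lower bounds.

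First I iterate the strong commutation: by induction $\Psi^k=F_n^k\circ F_m^{-k}=F_m^{-k}\circ F_n^k$, and taking graph-transposes in the strong commutation relation shows $\Psi^{-1}=F_n^{-1}\circ F_m=F_m\circ F_n^{-1}$, hence $(\Psi^{-1})^k=F_n^{-k}\circ F_m^k=F_m^k\circ F_n^{-k}$. The identity $\Ent(\Psi)=\Ent(\Psi^{-1})$ follows because the reversal $(x_1,\ldots,x_k)\mapsto(x_k,\ldots,x_1)$ is a bijection between $k$-orbits of $\Psi$ and $k$-orbits of $\Psi^{-1}$ preserving the $(k,\eps)$-separation condition, so $s_{k,\eps}(\Psi)=s_{k,\eps}(\Psi^{-1})$.

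For the $\Psi^k$-horseshoe: the open map $F_n^k$ has $n^k$ monotone branches $[c_{j-1},c_j]$, each mapped homeomorphically onto $I$. For small $\delta>0$ set $A_j=[c_{j-1}+\delta,c_j-\delta]$; these are pairwise disjoint, and $F_n^k(A_j)\supset[\alpha,1-\alpha]$ for some $\alpha=\alpha(\delta)\to 0$. Using $\Psi^k=F_m^{-k}\circ F_n^k$,
\[
\Psi^k(A_j)\supset F_m^{-k}\bigl([\alpha,1-\alpha]\bigr)=I\setminus N_\alpha,
\]
where $N_\alpha$ is a union of open neighborhoods of the points of $F_m^{-k}(\{0,1\})$ (of cardinality at most $m^k+1$) whose diameters tend to $0$ with $\alpha$. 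Discarding those branches whose interior contains a point of $F_m^{-k}(\{0,1\})$ leaves at least $n^k-(m^k+1)$ surviving branches; for $\delta>\alpha$ small, the corresponding $A_j$'s avoid $N_\alpha$ and satisfy $\bigcup_j A_j\subset I\setminus N_\alpha\subset\Psi^k(A_{j_0})$ for every $j_0$. Thus $\Psi^k$ admits a horseshoe of size at least $n^k-m^k-1$, and Propositions~\ref{prop:horseshoes} and~\ref{prop:entIter} give
\[
\Ent(\Psi)\ge\tfrac{1}{k}\log\bigl(n^k-m^k-1\bigr)\xrightarrow[k\to\infty]{}\log n\quad\text{when }n>m.
\]
The symmetric construction applied to $(\Psi^{-1})^k=F_n^{-k}\circ F_m^k$ using the $m^k$ monotone branches of $F_m^k$ yields $\Ent(\Psi)=\Ent(\Psi^{-1})\ge\log m$ when $m>n$.

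The main obstacle will be this branch-accounting: as $k$ grows, up to $m^k+1$ of the $n^k$ branches of $F_n^k$ may be contaminated by preimages of $\{0,1\}$ under $F_m^k$, so the naive horseshoe size drops to $n^k-m^k-1$ and is useful only when $n>m$. The symmetric argument covers $m>n$; the degenerate case $n=m$ (which forces $F_n\neq F_m$, since $F\circ F^{-1}=F^{-1}\circ F$ fails for any $F\colon I\to I$ with $n\ge 2$ critical branches) is handled by either argument upon noting that the contaminated branches are controlled by a finer count using that strong commutation aligns the critical structures of $F_n^k$ and $F_m^k$. Combining both horseshoe constructions yields $\Ent(\Psi)\ge\max\{\log n,\log m\}$.
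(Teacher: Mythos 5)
Your proof skeleton --- a horseshoe for $\Psi^k$, then Propositions~\ref{prop:horseshoes} and~\ref{prop:entIter}, then $\Ent(\Psi)=\Ent(\Psi^{-1})$ for the second bound --- is exactly the paper's, and your identities $\Psi^k=F_m^{-k}\circ F_n^k$, $\Psi^{-1}=F_n^{-1}\circ F_m$, and the orbit-reversal argument for $\Ent(\Psi)=\Ent(\Psi^{-1})$ are all fine. The trouble is the horseshoe itself. You miss the observation that makes all of your bookkeeping unnecessary: a \emph{full} monotone branch $B$ of $F_n^k$ satisfies $F_n^k(B)=I$, hence $\Psi^k(B)=F_m^{-k}(F_n^k(B))=F_m^{-k}(I)=I$, so full branches already have maximal image and the only issue is disjointness, solved by taking every other branch. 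This yields a $\lfloor(n^k+1)/2\rfloor$-horseshoe for $\Psi^k$ with no $\delta$, no $\alpha$, no discarded branches --- and, crucially, with a count independent of $m$. Your shrinking construction, as written, actually fails: you require ``$\delta>\alpha$'', but $\alpha=\alpha(\delta)$ is the image defect of $F_n^k$ on the shrunken branches, and for tent maps $\alpha(\delta)=n^k\delta>\delta$, so no admissible $\delta$ exists. Moreover, avoidance of $N_\alpha$ is governed not by $\alpha$ but by the diameters of the components of $F_m^{-k}([0,\alpha)\cup(1-\alpha,1])$, which for piecewise linear maps with $n>m$ are of size $(n/m)^k\delta>\delta$, i.e.\ too large exactly in the case you need. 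The construction can be repaired (discard every branch whose \emph{closure} meets $F_m^{-k}(\{0,1\})$, at most $2(m^k+1)$ of them; fix the resulting positive distance $\eta$ from the surviving closed branches to $F_m^{-k}(\{0,1\})$; choose $\alpha$ so small that $N_\alpha$ lies in the $\eta$-neighborhood of $F_m^{-k}(\{0,1\})$; only then choose $\delta$ with $\alpha(\delta)\leq\alpha$), but the quantifier order you give is genuinely wrong, not merely loose.

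The second gap is the case $n=m$. Your count $n^k-m^k-1$ is vacuous when $n\leq m$, which is what forces your case analysis; for $n\neq m$ the two cases do cover the maximum, but for $n=m$ you offer only the assertion that ``a finer count'' works, which is not an argument, and your parenthetical rules out only $F_n=F_m$, not the case of two distinct strongly commuting open maps with equally many critical points. (That case is in fact impossible: Lemma~\ref{lem:crit} with $n=m$ gives $t_1<\hat t_1<t_2$, while the same lemma applied to the inverted relation $F_m\circ F_n^{-1}=F_n^{-1}\circ F_m$ gives $\hat t_1<t_1$, a contradiction --- but you neither prove nor invoke this.) With the paper's alternate-full-branch horseshoe none of this arises: one gets $\Ent(\Psi)\geq\log n$ directly and $\Ent(\Psi)\geq\log m$ after passing to $\Psi^{-1}$, with no comparison between $n$ and $m$ ever needed.
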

\begin{proof}
	Denote by $A_i$, $1\leq i\leq n$, intervals in $I$ such that $F_n|_{A_i}$ are monotone, and $F_n(A_i)=I$, for all $i\in\{1,\ldots, n\}$. Furthermore, order the intervals such that if $i<j$, then $x\leq y$ for all $x\in A_i, y\in A_{j}$.
	
	Note that $F_m^{-1}\circ F_n(A_i)=I$ for every $1\leq i\leq n$. So $\{A_i: 1\leq i\leq n, i \text{ odd}\}$ is $\lfloor \frac{n+1}{2}\rfloor$-horseshoe for $F_m^{-1}\circ F_n$, so Proposition~\ref{prop:horseshoes} implies that $\Ent(F_m^{-1}\circ F_n)\geq \log(\lfloor \frac{n+1}{2}\rfloor)$. Denote for simplicity $F:=F_m^{-1}\circ F_n$ and note that, since $F_n\circ F_m^{-1}=F_m^{-1}\circ F_n$, we have $F^k=(F_m^{-1}\circ F_n)^k=F_m^{-k}\circ F_n^k=F_{m^k}^{-1}\circ F_{n^k}$ for every $k\in\N$. We conclude that $\Ent(F^k)\geq \log(\lfloor \frac{n^k+1}{2}\rfloor)$, for every $k\in\N$. Proposition~\ref{prop:entIter} then implies that for every $k\in\N$
	\begin{equation*}
	\begin{split}
	\Ent(F)&\geq \frac 1k\Ent(F^k)\geq \frac 1k\log\left(\left\lfloor \frac{n^k+1}{2}\right\rfloor\right)\geq \frac 1k\log\left( \frac{n^k}{2}\right)=\log(n)-\frac{\log(2)}{k}.
	\end{split}
	\end{equation*}
	We conclude that $\Ent(F)\geq\log(n)$. Furthermore, since $\Ent(F)=\Ent(F^{-1})$ (see \cite{KelTen}), it follows that $\Ent(F)\geq\log(m)$ also.
\end{proof}

Let $n>m$, $F_n\circ F_m^{-1}=F_m^{-1}\circ F_n$. Let $\Gamma=\Gamma(F_m^{-1}\circ F_n)=\{(F_m(t),F_n(t)): t\in [0,1]\}$. Then it follows that $\Gamma^k=\{(F^k_m(t),F^k_n(t)): t\in [0,1]\}$. It will be easier to represent $\Gamma^k$ at times by its parameterization. So we denote $[p,q]_k^P=\{(F^k_m(t),F^k_n(t)): t\in [p,q]\}$. Let $\pi_1$ and $\pi_2$ be projections on the first and second coordinates of $\Gamma$, and $\pi_1^k$ and $\pi_2^k$ be projections on the first and second coordinates of $\Gamma^k$.   Arc $A\subset \Gamma^k$ is a {\it monotone arc} of $\Gamma^k$ if  $\pi_1^k|_{A}$ and $\pi_2^k|_{A}$ are homeomorphisms. $A$ is a {\it monotone branch} if it is a maximal monotone arc.

The following result is found in \cite{AM-stronglycomm}:

\begin{lemma}[Corollary~4.15 in \cite{AM-stronglycomm}]\label{lem:commoncritpt}
	If $f,g\colon I\to I$ are piecewise monotone maps such that $f\circ g^{-1}=g^{-1}\circ f$, then $c$ is a critical point  of $f$ if and only if it is not a critical point of $g$.
\end{lemma}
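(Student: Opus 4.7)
The iff in the statement must be parsed in a restricted universe, most naturally $c\in C(f)\cup C(g)$ (the set of all points critical for at least one of $f,g$): the literal universal quantification over $c\in I$ would force the absurd conclusion that every $c\in I\setminus C(g)$ is a critical point of $f$, impossible when $g$ is piecewise monotone with only finitely many critical points. Under this reading, the iff is logically equivalent to the disjointness $C(f)\cap C(g)=\emptyset$: the forward implication ``$c\in C(f)\Rightarrow c\notin C(g)$'' is disjointness itself, and the reverse implication ``$c\notin C(g)\Rightarrow c\in C(f)$'' follows from $c\in C(f)\cup C(g)$ combined with $c\notin C(g)$. Both directions therefore reduce to showing $C(f)\cap C(g)=\emptyset$, which is what I would attack.

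I would argue by contradiction: assume $c\in C(f)\cap C(g)$, and reduce (by choosing the side of the perturbation below) to the case where both $f$ and $g$ have a local maximum at $c$. Rewrite the strong commuting condition as the pointwise set equality
\[
f\bigl(g^{-1}(x)\bigr) = g^{-1}\bigl(f(x)\bigr) \qquad \text{for every } x\in I,
\]
and evaluate at $x_0=g(c)-\delta$ for small $\delta>0$. Because $c$ is a local max of $g$, $g^{-1}(g(c)-\delta)$ contains two nearby points $c_-(\delta)<c<c_+(\delta)$ tending to $c$; because $c$ is also a local max of $f$, the values $f(c_\pm(\delta))$ both lie strictly below $f(c)$ and tend to $f(c)$ from below, so the left-hand side contains two distinct points accumulating at $f(c)$ from below. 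On the right-hand side, strong commuting implies $f\circ g=g\circ f$ (as single-valued maps, using $g$ onto), so $f(g(c)-\delta)\to g(f(c))$; in the generic sub-case $f(c)\notin C(g)$, $g$ is monotone through $f(c)$, hence $g^{-1}(f(g(c)-\delta))$ contributes only a single point near $f(c)$ for small $\delta$. Provided $f(c_-(\delta))\neq f(c_+(\delta))$, this $2$-versus-$1$ cardinality mismatch near $f(c)$ contradicts the set equality.

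Two non-generic sub-cases must then be dispatched. If $f(c_-(\delta))=f(c_+(\delta))$ for all small $\delta$ (a local symmetry of $f$ with respect to the fold of $g$ at $c$, e.g.\ matched one-sided slopes in the piecewise linear model), I would pivot to the opposite perturbation $x_0=g(c)+\delta$: on that side $g^{-1}(g(c)+\delta)$ has no preimage near $c$, so the LHS has no local contribution near $f(c)$, while the RHS still yields one point near $f(c)$ by the same Taylor argument, again a contradiction. If $f(c)\in C(g)$, so the RHS also doubles up, I would iterate the same local analysis at the image point $f(c)$ along the orbit of $c$ under $(f,g)$; finiteness of the critical sets of $f$ and $g$ guarantees the iteration terminates in the generic sub-case above. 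The main obstacle is precisely controlling these aligned sub-cases: the core cardinality-mismatch step is clean, but the exceptional geometry and the orbit iteration are where the bulk of the technical work lies, leaning on the structural rigidity of strongly commuting piecewise monotone interval maps established in \cite{AM-stronglycomm}.
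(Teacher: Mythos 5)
You should first note a structural point: this paper does not prove Lemma 4.7 at all --- it is imported verbatim, with citation, as Corollary~4.15 of \cite{AM-stronglycomm}, and the only way it is ever used here is as the disjointness statement $C(f)\cap C(g)=\emptyset$ (e.g.\ to conclude $\{t_i\}_{i=1}^{n-1}\cap\{\hat t_j\}_{j=1}^{m-1}=\emptyset$ and in Lemma 4.9). So there is no internal proof to compare against, and your parsing of the ``iff'' (restrict to $c\in C(f)\cup C(g)$, where it becomes exactly disjointness) is the right reading. Your generic-case argument is also sound: if $c$ is a common turning point, $f(c)\notin C(g)$, and $f(c_-(\delta))\neq f(c_+(\delta))$ along some sequence $\delta\to 0$, then in a neighborhood $U$ of $f(c)$ on which $g$ is injective the left side of $f(g^{-1}(x_0))=g^{-1}(f(x_0))$, $x_0=g(c)-\delta$, contains two points of $U$ while the right side contains at most one; this is a genuine contradiction and is in the same spirit as the monotone-branch counting the paper itself uses in Section 4.

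However, the two exceptional cases are where the content of the lemma actually lives, and neither is dispatched. (A) In the symmetric sub-case your pivot to $x_0=g(c)+\delta$ claims the left side has ``no local contribution near $f(c)$,'' but the identity is an equality of \emph{entire} fibers: a preimage $d\in g^{-1}(g(c)+\delta)$ far from $c$ may perfectly well satisfy that $f(d)$ is near $f(c)$, in which case there is no cardinality mismatch. Running compactness along $\delta_n\downarrow 0$ only produces a point $t^*\neq c$ with $g(t^*)=g(c)$ and $f(t^*)=f(c)$, i.e.\ another parameter covering the same point of the graph --- a configuration your local analysis cannot exclude; excluding it requires precisely the global structure theory of strongly commuting maps. (B) The termination claim in the case $f(c)\in C(g)$ is false as logic: finiteness of $C(f)$ and $C(g)$ does not make the orbit iteration terminate, because the orbit can cycle inside these finite sets --- nothing in your argument rules out, say, $f(c)=c$ with $c$ a common turning point, in which case the iteration makes no progress at all. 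Since your contradiction is only obtained in the generic case, these residual configurations are exactly what the lemma asserts cannot occur, and your closing appeal to ``the structural rigidity \dots established in \cite{AM-stronglycomm}'' is circular: the statement you are proving \emph{is} Corollary~4.15 of that paper. As it stands, the proposal proves the lemma only under the additional hypotheses that $f(c)\notin C(g)$ and that the folds of $f$ and $g$ at $c$ are not locally symmetric.
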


Let $0<t_1<t_2<\ldots<t_{n-1}<1$ be the critical points of $F_n$ and $0<\widehat{t}_1<\widehat{t}_2<\ldots<\widehat{t}_{m-1}<1$ be the critical points of $F_m$. Then it follows from Lemma~\ref{lem:commoncritpt} that $\{t_i\}_{i=1}^{n-1}\cap\{\hat t_i\}_{i=1}^{m-1}=\emptyset$. For ease of notation let $t_0=\widehat{t}_0=0=s_0$, $t_n=\widehat{t}_m=1=s_{n+m-1}$ and $\{s_i\}_{i=0}^{n+m-1}=\{t_i\}_{i=0}^{n}\cup\{\widehat{t}_i\}_{i=0}^{m}$ where $0=s_0<s_1<s_2<...<s_{m+n-1}=1$.  Note that for each $i$, $[s_i,s_{i+1}]_1^P$ is a monotone branch of $\Gamma$; hence, $\Gamma$ has $n+m-1$ monotone branches. The reader is encouraged to see Figure~\ref{fig:tmtn} and Figure~\ref{fig:consistent}.

\begin{lemma}\label{lem:crit}
	Let $F_n\circ F_m^{-1}=F_m^{-1}\circ F_n$. Let $0<t_1<\ldots<t_{n-1}<1$ be critical points of $F_n$, and $0<\hat t_1<\ldots<\hat t_{m-1}<1$ be critical points of $F_m$. Then $t_i<\hat t_j<t_{i+1}$ if and only if $i=\lfloor\frac nmj\rfloor$.
\end{lemma}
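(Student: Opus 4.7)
The plan is to count the critical points of $H:=F_m\circ F_n$ in two different ways. First, the strong commutation hypothesis implies ordinary commutation $F_n\circ F_m=F_m\circ F_n$: for any $x\in I$ with $y:=F_m(x)$, one has $F_n(x)\in F_n\circ F_m^{-1}(y)=F_m^{-1}\circ F_n(y)$, so $F_m(F_n(x))=F_n(y)=F_n(F_m(x))$. Thus $H$ is well-defined as a single-valued map, and being the composition of two open piecewise monotone maps with $n$ and $m$ monotone pieces, $H$ is itself open and piecewise monotone with $mn$ monotone pieces, hence has exactly $mn-1$ critical points, which I list in increasing order as $0<c_1<c_2<\cdots<c_{mn-1}<1$.

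The key step is to identify $t_i=c_{im}$ for $1\le i\le n-1$ and $\hat t_j=c_{jn}$ for $1\le j\le m-1$. Setting $t_0:=0$ and $t_n:=1$, each restriction $F_n|_{[t_{k-1},t_k]}$ is a monotone homeomorphism onto $I$ by openness of $F_n$, so the critical points of $H$ inside $(t_{k-1},t_k)$ are precisely the $m-1$ points $F_n^{-1}(\{\hat t_1,\ldots,\hat t_{m-1}\})\cap(t_{k-1},t_k)$. Moreover $F_n(t_k)\in\{0,1\}$ for $1\le k\le n-1$ (critical values of an open piecewise monotone map are endpoints of the range), and $F_m$ sends $\{0,1\}$ into $\{0,1\}$ (applying the same openness principle to the outer monotone pieces of $F_m$), so $H(t_k)\in\{0,1\}$ is a global extremum of $H$ attained in the interior, making $t_k$ a critical point of $H$. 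Counting critical points of $H$ in $(0,t_i)$: there are $i-1$ of them of the form $t_k$ with $k<i$, plus $i(m-1)$ coming from the interiors of the $i$ open intervals $(0,t_1),(t_1,t_2),\ldots,(t_{i-1},t_i)$, giving a total of $im-1$; hence $t_i=c_{im}$. The symmetric argument applied to $H=F_n\circ F_m$ (with the roles of $F_n,F_m$ swapped) yields $\hat t_j=c_{jn}$.

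To conclude, strict monotonicity of the $c_k$ gives that $t_i<\hat t_j<t_{i+1}$ is equivalent to $im<jn<(i+1)m$: for $1\le i\le n-2$ this is immediate from $t_i=c_{im}$, $\hat t_j=c_{jn}$, $t_{i+1}=c_{(i+1)m}$, while the boundary cases $i=0$ and $i=n-1$ follow from the trivial bounds $0<\hat t_j<1$. An equality $jn=im$ or $jn=(i+1)m$ would force $\hat t_j\in\{t_i,t_{i+1}\}$, impossible by Lemma~\ref{lem:commoncritpt}, so the inequalities are strict, $jn/m\notin\Z$, and the condition is equivalent to $i=\lfloor jn/m\rfloor$, as claimed.

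The main delicate point I foresee is the rigorous verification that each $t_k$ truly is a critical point of $H$ (and not merely a boundary between monotone pieces of $F_n$ that is washed out by composition with $F_m$) and that no other critical points of $H$ arise besides the list $\{t_1,\ldots,t_{n-1}\}\cup F_n^{-1}(\{\hat t_1,\ldots,\hat t_{m-1}\})$. Both rest crucially on the openness of $F_n$ and $F_m$, which forces every monotone piece to map onto $I$ and every critical value to lie in $\{0,1\}$.
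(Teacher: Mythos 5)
Your proof is correct and takes essentially the same approach as the paper: both identify $t_i$ as the $(im)$th and $\hat t_j$ as the $(jn)$th critical point of the common composition $F_m\circ F_n=F_n\circ F_m$, and then conclude with the floor-function arithmetic, using Lemma~\ref{lem:commoncritpt} to rule out the equality cases. The only difference is that you spell out, via openness of $F_n$ and $F_m$, the critical-point counting that the paper's proof asserts without detail.
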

\begin{proof}
	By Lemma~\ref{lem:commoncritpt}, $t_i\neq \hat t_j$ for every $i\in\{1,\ldots,n-1\}$, and $j\in\{1,\ldots,m-1\}$. Note that every $\hat t_j$ is $(jn)$th critical point of $F_n\circ F_m$, and every $t_i$ is $(im)$th critical point of $F_m\circ F_n=F_n\circ F_m$. Given $j$, let $i_j$ and $k$   be such that $jn=i_jm+k$, where $0<k<m$. so $t_{i_j}<\hat t_j<t_{i_j+1}$. Notice that this is true if and only if $i_j=\frac nmj-\frac km=\lfloor\frac nmj\rfloor$.
\end{proof}

\begin{remark}\label{rem:stuff}
	Note the following:
	\begin{enumerate}
		\item $\pi_2([t_i,t_{i+1}]_1^P)=[0,1]$ and $\pi_1([\widehat{t}_i,\widehat{t}_{i+1}]_1^P)=[0,1]$ for each $i$.
		\item $|\{\widehat{t}_i\}_{i=1}^m\cap [t_j,t_{j+1}]|\leq 1$, since $n>m$, and by Lemma~\ref{lem:crit}.
		\item if $\widehat{t}_i\in [t_j,t_{j+1}]$, then $[t_j,\widehat{t}_{i}]_1^P$ and $[\widehat{t}_i,t_{j+1}]_1^P$ are monotone branches. 
		\item If $\{\widehat{t}_i\}_{i=1}^m\cap [t_j,t_{j+1}]=\emptyset$ then  $[t_j,t_{j+1}]_1^P$ is a monotone branch.
	\end{enumerate}
\end{remark} 

An arc $[p,q]^P_{k}\subset \Gamma^{k}$ is a {\it consistent monotone arc} if there exist monotone arcs $[p_1,q_1]_1^P$, $[p_2,q_2]_1^P,\ldots, [p_{k},q_{k}]_1^P$ such that  
$\pi_1([p_1,q_1]_1^P)=\pi_1^k([p,q]_k^P)$, $\pi_2([p_i,q_i]_1^P)=\pi_1([p_{i+1},q_{i+1}]_1^P)$ for $1\leq i\leq k-1$ and 
$\pi_2([p_k,q_k]_1^P)=\pi_2^k([p,q]_k^P)$. 

Let $\mathcal{M}_1$ be the collection of monotone branches of $\Gamma$. Suppose $\mathcal{M}_k$ has been defined for $\Gamma^k$. Suppose that $A\in \mathcal{M}_1$ and $B\in \mathcal{M}_k$ are such that $\pi_2(A)\cap \pi^k_1(B)=[z_1,z_2]=Z$ where $z_1<z_2$. Define $C_k(A,B)=\{(t,\pi_2^k\circ(\pi^k_1|_B)^{-1}\circ \pi_2\circ (\pi_1|_A)^{-1}(t)): t\in \pi_1\circ (\pi_2|_A)^{-1}(Z)\}=\{\pi_1\circ(\pi_2|_A)^{-1}(z),\pi_2^k\circ(\pi_1^k|_B)^{-1}(z): z\in Z\}$. Then define $\mathcal{M}_{k+1}=\{C_k(A,B):  A\in \mathcal{M}_1\mbox{, } B\in \mathcal{M}_k \mbox{ and }  \mbox{Int}(\pi_2(A)\cap \pi^k_1(B))\not = \emptyset\}$. See the following example.

\begin{example}\label{ex:consistent}
	Let $n=3,m=2$, and let $T_n, T_m$ be symmetric tent maps. Then $T_n\circ T_m^{-1}=T_m^{-1}\circ T_n=\Gamma$. Let $A$ denote the straight line in the graph of $\Gamma$ connecting $(0,0)$ and $(2/3,1)$, $B$ be the line connecting $(2/3,1)$ to $(1,1/2)$, $C$ the line connecting $(1,1/2)$ to $(2/3,0)$, and $D$ the line connecting $(2/3,0)$ to $(0,1)$, see Figure~\ref{fig:consistent}. Then $A,B,C,D$ are monotone branches of $\Gamma$, and also $\mathcal{M}_1=\{A,B,C,D\}$. 
	
	Note that every monotone branch of $\Gamma$ can be understood as a graph of a homeomorphism, \eg $A$ can be understood as a graph of $h_A\colon \pi_1(A)=[0,2/3]\to\pi_2(A)=I$, and $B$ can be understood as a graph of $h_B\colon\pi_1(B)\to\pi_2(B)$. Then $C_1(A,B)$ will be the graph of $h_A\circ h_B|_J$, where $J$ is the maximal interval in $I$ such that $h_A\circ h_B|_J$ is monotone, and  $C_1(A,B)$ is a monotone arc of $\Gamma^2$. One easily checks that $J=[4/9,2/3]$, and thus $C_1(A,B)$ is a graph of $J\xrightarrow{h_A}[2/3,1]\xrightarrow{h_B}[1/2,1]$, see Figure~\ref{fig:consistent}. Similarly, $C_1(C,A)$ is a graph of $h_C\circ h_A|_{J'}$, where $J'\subset I$ is the maximal such that $h_C\circ h_A|_{J'}$ is monotone (if it exists). We can easily check that $J'=[2/3,1]$, thus $C_1(C,A)$ is the graph of $J'\xrightarrow{h_C}[0,1/2]\xrightarrow{h_A}[0,3/4]$. We can similarly check that $\mathcal{M}_2=\{C_1(X,Y): X,Y\in\{A,B,C,D\}, \text{ and if $X=C$, then } Y\in\{A,D\}\}$. Arcs $C_1(C,C)$, and $C_1(C,B)$ are not defined since there are no non-degenerate intervals $J,J'\subset I$ such that $h_C\circ h_C|_J, h_C\circ h_B|_{J'}$ are monotone.
	
	We proceed similarly to find arcs in $\mathcal{M}_3$. Note that  $C_2(A,C_1(A,B))$ is the graph of $h_A\circ(h_A\circ h_B)|_J$, where $J\subset I$ is the maximal such that $h_A\circ(h_A\circ h_B)|_J$ is monotone. One easily finds $J=[8/27,4/9]$, and thus $C_2(A,C_1(A,B))$ is the graph of $J\xrightarrow{h_A}[4/9,2/3]\xrightarrow{h_A}[2/3,1]\xrightarrow{h_B}[1/2,1]$, which is  a monotone arc in $\Gamma^3$.
\end{example}

\begin{figure}
	\centering
	\begin{tikzpicture}[scale=5]
	\draw (0,0)--(0,1)--(1,1)--(1,0)--(0,0);
	\draw (0,0)--(2/3,1)--(1,1/2)--(2/3,0)--(0,1);
	\draw[fill] (0,0) circle (0.02);
	\draw[fill] (2/3,1) circle (0.02);
	\draw[fill] (1,1/2) circle (0.02);
	\draw[fill] (2/3,0) circle (0.02);
	\draw[fill] (0,1) circle (0.02);
	\node at (0.22,1/4) {$A$};
	\node at (0.75,3/4) {$B$};
	\node at (0.75,1/4) {$C$};
	\node at (0.22,3/4) {$D$};
	\end{tikzpicture}
	\hspace{10pt}
	\begin{tikzpicture}[scale=5]
	\draw (0,0)--(0,1)--(1,1)--(1,0)--(0,0);
	\draw (0,0)--(4/9,1)--(8/9,0)--(1,1/4)--(6/9,1)--(2/9,0)--(0,1/2)--(2/9,1)--(6/9,0)--(1,3/4)--(8/9,1)--(4/9,0)--(0,1);
	\draw[very thick] (4/9,1)--(2/3,1/2);
	\node[above] at (0.55,1) {\small $C_1(A,B)$};
	\draw[->] (0.6,1.03)--(0.5,0.9);
	\draw[very thick] (2/3,0)--(1,3/4);
	\node[right] at (1,0.5) {\small $C_1(C,A)$};
	\draw[->] (1.1,0.46)--(5/6,0.33);
	
	\node at (0,-0.002) {};
	\end{tikzpicture}
	\caption{Graph of $\Gamma=T_3\circ T_2^{-1}=T_2^{-1}\circ T_3$ from Example~\ref{ex:consistent} with monotone branches $A,B,C,D$ (left), and the graph of $\Gamma^2$ with consistent monotone arcs $C_1(A,B)$, and $C_1(C,A)$ in bold (right).}
	\label{fig:consistent}
\end{figure}
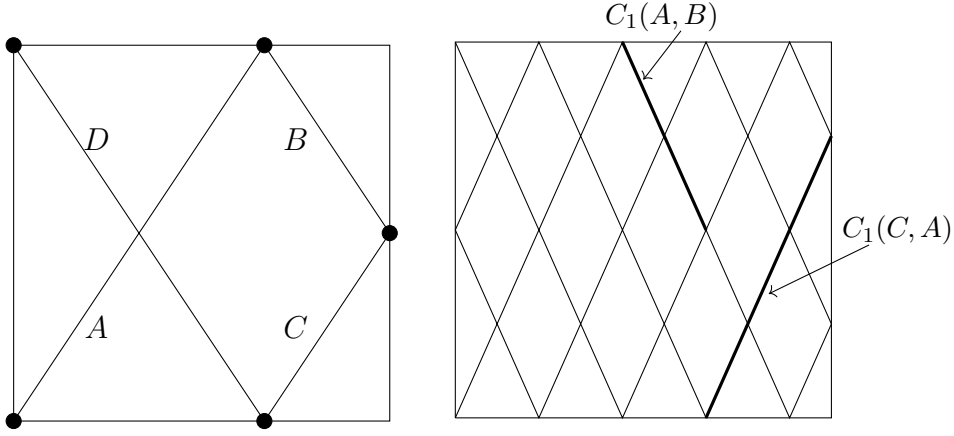

\begin{proposition}\label{FEnt1}  $\mathcal{M}_k$ has the following properties for each $k$:
	\begin{enumerate}
		\item $C$ is a consistent monotone arc for each $C\in \mathcal{M}_k$.
		\item $\Gamma^k=\bigcup_{C\in \mathcal{M}_k}C$.
	\end{enumerate}
\end{proposition}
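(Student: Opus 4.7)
I would prove both properties simultaneously by induction on $k$.

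For the base case $k=1$, the collection $\mathcal{M}_1$ is by definition the set of monotone branches of $\Gamma$, which are monotone arcs. Each is trivially consistent: the required $1$-step decomposition is the arc itself, and the matching conditions $\pi_1([p_1,q_1]_1^P) = \pi_1^1([p,q]_1^P)$ and $\pi_2([p_1,q_1]_1^P) = \pi_2^1([p,q]_1^P)$ hold tautologically. Coverage follows because the $s_i$'s are exactly the critical points of $F_n$ and $F_m$ together with $0,1$, so on each $[s_i,s_{i+1}]$ both maps are strictly monotone, yielding $\Gamma = \bigcup_i [s_i,s_{i+1}]_1^P$.

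For the inductive step, assume both properties hold for $\mathcal{M}_k$. To prove (1) for $\mathcal{M}_{k+1}$, let $C = C_k(A,B)$ with $A\in\mathcal{M}_1$ and $B\in\mathcal{M}_k$, and let $[p_1,q_1]_1^P,\ldots,[p_k,q_k]_1^P$ be a consistent decomposition of $B$ provided by the induction hypothesis. Set $Z = \pi_2(A)\cap\pi_1^k(B)$, which has nonempty interior by the definition of $\mathcal{M}_{k+1}$. I would construct a $(k+1)$-step decomposition of $C$ by taking $A'\subset A$ with $\pi_2(A') = Z$, then $[p_1',q_1']_1^P\subset [p_1,q_1]_1^P$ with $\pi_1([p_1',q_1']_1^P) = Z$, and recursively $[p_{i+1}',q_{i+1}']_1^P\subset [p_{i+1},q_{i+1}]_1^P$ with $\pi_1([p_{i+1}',q_{i+1}']_1^P) = \pi_2([p_i',q_i']_1^P)$. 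Checking that $\pi_1(A') = \pi_1^{k+1}(C)$ and $\pi_2([p_k',q_k']_1^P) = \pi_2^{k+1}(C)$ is immediate from the formula defining $C_k(A,B)$ (the latter being the image of $Z$ under the composition of the consistent homeomorphisms coming from $B$). Together with the chain identities by construction, this yields the desired consistent decomposition. Monotonicity of $C$ is automatic: its first coordinate is the parameter $t$ on an interval, and its second is a composition of homeomorphisms.

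To prove (2) for $\mathcal{M}_{k+1}$, take $(x,y)\in\Gamma^{k+1}$. The composition $\Gamma^{k+1} = \Gamma\circ\Gamma^k$ gives some $w\in I$ with $(x,w)\in\Gamma$ and $(w,y)\in\Gamma^k$. By the base case and induction hypothesis there exist $A\in\mathcal{M}_1$ and $B\in\mathcal{M}_k$ containing $(x,w)$ and $(w,y)$ respectively. In the generic case $w\in\mathrm{Int}(\pi_2(A))\cap\mathrm{Int}(\pi_1^k(B))$, the intersection $\pi_2(A)\cap\pi_1^k(B)$ has nonempty interior, so $C_k(A,B)\in\mathcal{M}_{k+1}$, and the formula for $C_k(A,B)$ directly shows $(x,y)\in C_k(A,B)$.

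The main obstacle is the degenerate case in which every admissible pair $(A,B)$ produces only $\pi_2(A)\cap\pi_1^k(B) = \{w\}$. I would handle this by a closedness--density argument: the finite union $\bigcup_{C\in\mathcal{M}_{k+1}} C$ is closed in $\Gamma^{k+1}$; the set of parameters $t\in I$ whose finitely many relevant iterates under $F_n$ and $F_m$ avoid all critical points is open and dense in $I$; for such generic $t$ the corresponding intermediate value $w$ lies strictly in the interior of both projections, so the point $(F_m^{k+1}(t),F_n^{k+1}(t))$ lies in some $C\in\mathcal{M}_{k+1}$ by the previous paragraph. Hence $\bigcup\mathcal{M}_{k+1}$ is a closed set containing a dense subset of $\Gamma^{k+1}$, and equality follows.
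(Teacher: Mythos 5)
Your treatment of property (1) coincides with the paper's proof: you restrict $A$ to $Z=\pi_2(A)\cap\pi_1^k(B)$ and push that restriction through the consistent decomposition $B_1,\ldots,B_k$ of $B$ supplied by the induction hypothesis, which is exactly the paper's construction of its arcs $A_1,\ldots,A_{k+1}$. For property (2) you genuinely diverge. The paper argues pointwise, with no genericity: given $(x,y)\in\Gamma^{k+1}$ with intermediate value $z$, it takes $B\in\mathcal{M}_k$ through $(z,y)$, notes that the nondegenerate interval $\pi_1^k(B)$ contains points on at least one side of $z$ (say above), and then chooses a monotone branch $A$ through $(x,z)$ whose image $\pi_2(A)$ also contains points above $z$; such a branch always exists because local extreme values of the open map $F_n$ lie in $\{0,1\}$, while at critical points of $F_m$ the map $F_n$ is locally monotone, so one of the two adjacent branches serves. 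This gives $\mathrm{Int}(\pi_2(A)\cap\pi_1^k(B))\neq\emptyset$ for \emph{every} point, degenerate or not. Your alternative --- cover generic points, then use that $\bigcup_{C\in\mathcal{M}_{k+1}}C$ is closed (a finite union of compact arcs) and that the continuous parametrization sends a cofinite, hence dense, set of parameters to a dense subset of $\Gamma^{k+1}$ --- is a legitimate strategy that spares you the case analysis at the corners of $\Gamma$, at the cost of invoking finiteness of $\mathcal{M}_{k+1}$ and a closure argument.

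There is, however, one step you should repair: your genericity condition does not deliver the interiority claim on the $B$ side. Avoiding critical points along the iterates of $t$ does ensure $F_m^k(t)\notin\{s_i\}$, hence a branch $A$ with $w\in\mathrm{Int}(\pi_2(A))$; but the induction hypothesis only hands you \emph{some} $B\in\mathcal{M}_k$ containing $(w,y)$, and nothing in properties (1)--(2) prevents $w$ from being an endpoint of $\pi_1^k(B)$ for that $B$, nor guarantees a better choice of $B$. The endpoints of the intervals $\pi_1^k(B)$ are produced by the recursive intersections defining $\mathcal{M}_k$, and they are not controlled by critical orbits of $t$ (for instance they involve sets like $F_m(\{s_i\})$ and their iterated pullbacks, which $w$ can hit even when the orbit of $t$ avoids every critical point, since $F_m$ is not injective). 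The clean fix stays inside your framework: let $E_k$ be the set of endpoints of $\pi_1^k(B)$ over the finitely many $B\in\mathcal{M}_k$, and define the bad parameter set as $\{t: F_m^k(t)\in\{s_i\}\ \mbox{or}\ F_m^k(F_n(t))\in E_k\}$. This set is finite because the relevant compositions are piecewise strictly monotone, so its complement is open and dense, and for good $t$ any $B$ containing $(w,y)$ automatically has $w\in\mathrm{Int}(\pi_1^k(B))$. With that substitution your closedness--density argument closes the proof.
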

\begin{proof} Since $\Gamma$ is the union of its monotone branches and each monotone branch is trivially a consistent monotone arc, (1) and (2) are satisfied for $k=1$. 
	
	Continuing inductively, suppose that $\mathcal{M}_k$ satisfies (1) and (2). Let $C_k(A,B)\in \mathcal{M}_{k+1}$ and $\pi_2(A)\cap \pi^k_1(B)=Z$.  Since $B\in\mathcal{M}_k$, and $\mathcal{M}_k$ satisfies (1), it follows that $B$ is a consistent monotone arc. So by the definition there exist monotone arcs $B_1, B_2,...,B_k\subset \Gamma$ such that  
	$\pi_1(B_1)=\pi_1^k(B)$, $\pi_2(B_i)=\pi_1(B_{i+1})$ for $1\leq i\leq k-1$ and 
	$\pi_2(B_k)=\pi^k_2(B)$. Let $A_1=(\pi_2|_A)^{-1}(Z)$, $A_2= (\pi_1|_{B_1})^{-1}(Z)$, and for $2\leq k$, let $A_{i+1}=(\pi_1|_{B_i})^{-1}\circ \pi_2(A_i)$. Then $\pi_1(A_1)=\pi_1(C_k(A,B))$, $\pi_2(A_i)=\pi_1(A_{i+1})$ for $1\leq i\leq k$, and $\pi_2(A_{k+1})=\pi_2(C_k(A,B))$, so $C_k(A,B)$ is a consistent monotone arc of $\Gamma^{k+1}$.
	
	Let $(x,y)\in \Gamma^{k+1}$. Then there exists $z\in [0,1]$ such that $(x,z)\in \Gamma$ and $(z,y)\in \Gamma^k$. Let $B\in \mathcal{M}_k$ such that $(z,y)\in B$. Without loss of generality, assume there exists $z_1\in \pi^k_1(B)$ such that $z<z_1$. Then there  exists a monotone branch $A\in \mathcal{M}_1$ such that $(x,z)\in A$ and $z_2\in \pi_2(A)$ where $z<z_2$. Hence $\mbox{Int}(\pi_2(A)\cap\pi_1^k(B))\not=\emptyset$. So $(x,y)\in C_k(A,B)\in \mathcal{M}_{k+1}$. Hence $\mathcal{M}_{k+1}$ satisfies (2).
\end{proof}

\begin{lemma}\label{FEnt2} If $C_1, C_2\in \mathcal{M}_k$ and $\mathrm{Int}(C_1\cap C_2)\not=\emptyset$, then $C_1=C_2$.
\end{lemma}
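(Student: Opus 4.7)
The natural approach is induction on $k$. For the base case $k=1$, the elements of $\mathcal{M}_1$ are the monotone branches $[s_i,s_{i+1}]_1^P$ of $\Gamma$, parametrized by consecutive subintervals of $[0,1]$ bounded by critical points of $F_m$ and $F_n$. If two distinct branches overlapped in an open subarc, one would obtain a fixed-point-free homeomorphism $\phi$ between nondegenerate subintervals of two different branch-intervals satisfying $F_m\circ\phi=F_m$ and $F_n\circ\phi=F_n$; tracking $\phi$ at its boundary critical points forces $F_m$ and $F_n$ to share a critical point, contradicting Lemma~\ref{lem:commoncritpt}.

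For the inductive step, write $C_1=C_k(A_1,B_1)$ and $C_2=C_k(A_2,B_2)$ in $\mathcal{M}_{k+1}$, take a nondegenerate interval $J\subset\pi_1^{k+1}(C_1)\cap\pi_1^{k+1}(C_2)$ over which the arcs coincide, and for $x\in J$ set $z_i(x)=\pi_2\circ(\pi_1|_{A_i})^{-1}(x)$, with common second coordinate $y(x)$. If $A_1=A_2$, then $z_1\equiv z_2$ on $J$, and the subarc $\{(z_1(x),y(x)):x\in J\}$ lies in $B_1\cap B_2$ with nondegenerate interior (since $z_1$ is a monotone continuous function of $x$), so by the inductive hypothesis $B_1=B_2$ and hence $C_1=C_2$.

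The harder case is $A_1\neq A_2$. One first notes that $z_1(x)\neq z_2(x)$ on the interior of $J$: otherwise $(x,z_1(x))$ would be a common interior point of $A_1$ and $A_2$, contradicting the base case. From $(x,z_i(x))\in\Gamma$ we have $F_m(z_1(x))=F_n(x)=F_m(z_2(x))$. When $k=1$ we additionally get $(z_i(x),y(x))\in\Gamma$, hence $F_n(z_1(x))=F_m(y(x))=F_n(z_2(x))$, so the parametrization $t\mapsto(F_m(t),F_n(t))$ of $\Gamma$ takes the same value at the two continuously varying distinct points $z_1(x),z_2(x)$ on a whole interval of $x$'s --- precisely what the base case forbids.

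For general $k\geq 2$ the auxiliary relation $F_n(z_1)=F_n(z_2)$ is not immediate, and this is the main technical obstacle. My plan is to pass to the $t$-parametrization of $\Gamma^{k+1}$: by induction on the recursive definition of $\mathcal{M}_{k+1}$, each $C\in\mathcal{M}_{k+1}$ equals $[p,q]_{k+1}^P$ for a unique subinterval of $[0,1]$ whose endpoints are critical points of one of the intermediate compositions $F_m^{k+1-i}\circ F_n^i$ for $0\leq i\leq k+1$; these subintervals partition $[0,1]$, so distinct $C$'s give interior-disjoint $t$-intervals. Since strong commutation is preserved under iteration (as used in the proof of Proposition~\ref{prop:lowerbound}), $F_{m^{k+1}}$ and $F_{n^{k+1}}$ also satisfy the hypothesis of Lemma~\ref{lem:commoncritpt} and have disjoint critical sets, so the base-case reflection argument applied at level $k+1$ shows that interior-disjoint $t$-intervals give interior-disjoint arcs in $\Gamma^{k+1}$. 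Lemma~\ref{lem:commoncritpt} is ultimately what closes the loop in every case.
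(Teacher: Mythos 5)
You have correctly isolated the point where this lemma is delicate: when $C_1=C_k(A_1,B_1)$ and $C_2=C_k(A_2,B_2)$ share a subarc, nothing forces the intermediate coordinates $z_1(x)$ and $z_2(x)$ to agree, and indeed the paper's own proof glosses over this (it picks a single $z$ with $(x,z)\in\Gamma$, $(z,y)\in\Gamma^k$ and then treats it as lying on all four arcs $A_1,A_2,B_1,B_2$ at once). Your handling of the case $A_1=A_2$, and your $k=1$ argument in the case $A_1\neq A_2$, are correct. But two steps of your proposal have genuine gaps. First, the base case: the maximal extension of the correspondence $\phi$ need not terminate at a critical point of either map. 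Two monotone branches could a priori share a subarc and then diverge at a parameter where both $F_m$ and $F_n$ are locally injective (a ``Y''-shaped configuration), and your tracking argument produces no shared critical point there, so Lemma~\ref{lem:commoncritpt} is never contradicted. What actually kills the overlap is strong commutativity via a cardinality count: on a shared subarc one has distinct parameters $t\neq\phi(t)$ with the same $F_m$-value and the same $F_n$-value, so $|F_n(F_m^{-1}(x))|\leq m-1$ for every $x$ in a nondegenerate interval; but $F_n(F_m^{-1}(x))=F_m^{-1}(F_n(x))$ has exactly $m$ points whenever $F_n(x)\in(0,1)$, hence for all but finitely many such $x$ --- a contradiction.

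Second, and more seriously, your inductive step for $k\geq 2$ is a plan rather than a proof, and as formulated it is circular: the assertion that the parameter intervals of distinct elements of $\mathcal{M}_{k+1}$ have disjoint interiors (your ``partition'' claim) is essentially equivalent to the statement of Lemma~\ref{FEnt2} itself, and the identification of each $C\in\mathcal{M}_{k+1}$ with a full parametrized arc $[p,q]_{k+1}^P$ bounded by critical points of intermediate compositions is left unproved; it needs a lifting argument through the relations, which again uses strong commutation. The irony is that the relation you declare ``not immediate'' becomes immediate once you ask for the right one: you do not need $F_n(z_1)=F_n(z_2)$ but $F_n^k(z_1)=F_n^k(z_2)$, and this follows at once because $(z_i,y)\in\Gamma^k=F_{m^k}^{-1}\circ F_{n^k}$ says exactly that $F_n^k(z_i)=F_m^k(y)$ for $i=1,2$. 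Since $(F_m,F_{n^k})$ is again a strongly commuting pair of open maps, your $k=1$ argument (with the counting argument above in place of the tracking argument) then applies verbatim for every $k$: distinct $z_1(x)\neq z_2(x)$ with $F_m(z_1)=F_m(z_2)$ and $F_n^k(z_1)=F_n^k(z_2)$ over a whole interval of $x$ is impossible. This single observation repairs your induction uniformly in $k$ and, incidentally, supplies the justification missing from the paper's one-line choice of a common $z$.
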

\begin{proof} Proof is by induction. This is trivial for $\mathcal{M}_1$. Suppose  there exist $C_1, C_2\in \mathcal{M}_{k+1}$ such that $\mbox{Int}(C_1\cap C_2)\not=\emptyset$. Let $(x,y)\in\mbox{Int}(C_1\cap C_2)$, $A_1, A_2\in \mathcal{M}_1$ and  $B_1, B_2\in \mathcal{M}_k$ such that $C_1=C_k(A_1,B_1)$ and $C_2=C_k(A_2,B_2)$. Let $Z_1=\pi_2(A_1)\cap \pi_1^k(B_1)$ and $Z_2=\pi_2(A_2)\cap \pi_1^k(B_2)$. Since $\pi_1^k|_{B_1}$ and $\pi_1^k|_{B_2}$ are homeomorphisms onto $Z_1$ and $Z_2$ respectively, it follows that there  exist $z\in \mbox{Int}(Z_1\cap Z_2)$ such that $(x,z)\in \Gamma$ and $(z,y)\in \Gamma^k$. Thus, $(x,z)\in \mbox{Int}(A_1\cap A_2)$ and $(z,y)\in \mbox{Int}(B_1\cap B_2)$. Therefore, by the induction hypothesis, $A_1=A_2$ and $B_1=B_2$. It follows that $C_1=C_2$.
\end{proof}

\begin{lemma}\label{FEnt3}
	Let $A$ be a monotone arc of $\Gamma\subset I\times I$ and $B$ be a monotone arc of $\Delta\subset I\times I$ such that $\pi_1(A)=[x_1,x_2]$, $\pi_2(A)=[y_1,y_2]={\pi}_1(B)$ and ${\pi}_2(B)=[z_1,z_2]$. Then there exists a monotone arc $C\subset \Delta\circ \Gamma$ such that ${\pi}_1(C)=[x_1,x_2]$ and ${\pi}_2(C)=[z_1,z_2]$, where $\pi_1,\pi_2$ are the respective projection maps of $I\times I$ to $I$.
\end{lemma}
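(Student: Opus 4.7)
The plan is to use the fact that a monotone arc is exactly the graph of a homeomorphism between its two projections. The arc $C$ will be essentially constructed as the graph of the composition of the homeomorphisms corresponding to $A$ and $B$.

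First I would formalize the observation that a monotone arc is a graph. Since $\pi_1|_A$ and $\pi_2|_A$ are homeomorphisms onto their images, the arc $A$ is the graph of the homeomorphism $h_A := \pi_2|_A \circ (\pi_1|_A)^{-1} \colon [x_1,x_2] \to [y_1,y_2]$, and similarly $B$ is the graph of a homeomorphism $h_B \colon [y_1,y_2] \to [z_1,z_2]$. The domains and ranges match up precisely by the hypothesis $\pi_2(A) = \pi_1(B) = [y_1,y_2]$, so the composition $h := h_B \circ h_A \colon [x_1,x_2] \to [z_1,z_2]$ is well-defined and is itself a homeomorphism.

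Next I would take $C := \{(t, h(t)) : t \in [x_1,x_2]\}$, the graph of $h$. That $\pi_1(C) = [x_1,x_2]$ and $\pi_2(C) = [z_1,z_2]$ is immediate from $h$ being a homeomorphism onto $[z_1,z_2]$, and the same fact shows both projections restrict to homeomorphisms on $C$, so $C$ is a monotone arc. The only nontrivial check is that $C \subset \Delta \circ \Gamma$: for each $t \in [x_1,x_2]$, set $s := h_A(t) \in [y_1,y_2]$. Then $(t,s) \in A \subset \Gamma$ and $(s, h_B(s)) \in B \subset \Delta$, with $h_B(s) = h_B(h_A(t)) = h(t)$. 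By the definition of the composition of relations, $(t, h(t)) \in \Delta \circ \Gamma$, as desired.

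There is no real obstacle here; the lemma is essentially a restatement in graph-theoretic language of the fact that the composition of two homeomorphisms between intervals is again a homeomorphism. The only thing to be a bit careful about is bookkeeping of the roles of $\pi_1$ and $\pi_2$ and the matching of endpoints, but the hypothesis $\pi_2(A) = \pi_1(B)$ makes the composition legal and its image exactly $[z_1,z_2]$. I expect the proof to take only a few lines.
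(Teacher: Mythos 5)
Your proposal is correct and follows essentially the same approach as the paper: both define the homeomorphisms $\alpha=\pi_2|_A\circ(\pi_1|_A)^{-1}$ and $\beta=\pi_2|_B\circ(\pi_1|_B)^{-1}$, take $C$ to be the graph of $\beta\circ\alpha$, and verify $C\subset\Delta\circ\Gamma$ pointwise exactly as you do. No gaps; this matches the paper's argument line for line.
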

\begin{proof} Let $\alpha:[x_1,x_2]\to [y_1,y_2]$ be defined by $\alpha=\pi_2|_A\circ(\pi_1|_A)^{-1}$ and $\beta:[y_1,y_2]\to [z_1,z_2]$ be defined by $\beta={\pi}_2|_B\circ({\pi}_1|_B)^{-1}$. Since $A$ and $B$ are monotone arcs, $\alpha$ and $\beta$ are homeomorphisms. Define $C=\{(t,\beta\circ\alpha(t)): t\in [x_1,x_2]\}$. Since $(t,\alpha(t))\in \Gamma$ and  $(\alpha(t),\beta\circ\alpha(t))\in \Delta$ it follows that $C\subset \Delta\circ \Gamma$. Since $\beta\circ\alpha:[x_1,x_2]\to[z_1,z_2]$ is a homeomorphism, it follows that $C$ is a monotone arc.
\end{proof} 

For $A\in \mathcal{M}_1$, define $\mathcal{C}^k(A)=\{C_k(A,B): B\in \mathcal{M}_k \mbox{ and } \mbox{Int}(\pi_2(A)\cap\pi^k_1(B))\not=\emptyset\}$. Then $\mathcal{M}_{k+1}=\bigcup_{A\in\mathcal{M}_1}\mathcal{C}^k(A)$.

\begin{lemma}\label{FEnt4} If $A\in \mathcal{M}_1$ is such that $\pi_2(A)=[0,1]$, then $|\mathcal{C}^k(A)|=|\mathcal{M}_k|$.
\end{lemma}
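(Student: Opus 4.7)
The plan is to exhibit a bijection $\Phi\colon\mathcal{M}_k\to\mathcal{C}^k(A)$ given by $\Phi(B)=C_k(A,B)$. The hypothesis $\pi_2(A)=[0,1]$ will be crucial for both directions.

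For surjectivity, I note that for any $B\in\mathcal{M}_k$, since $\pi_2(A)=[0,1]$, the intersection $\pi_2(A)\cap\pi_1^k(B)$ equals $\pi_1^k(B)$. By Proposition~\ref{FEnt1}, $B$ is a consistent monotone arc, so $\pi_1^k|_B$ is a homeomorphism onto a non-degenerate interval, hence $\mathrm{Int}(\pi_1^k(B))\neq\emptyset$. Therefore $C_k(A,B)$ is defined for every $B\in\mathcal{M}_k$, which is precisely what it means for $\Phi$ to be surjective onto $\mathcal{C}^k(A)$.

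For injectivity, the key observation is that the definition of $C_k(A,B)$ exposes an ``intermediate coordinate'' map. Given $(x,y)\in C_k(A,B)$, setting $z=\pi_2\circ(\pi_1|_A)^{-1}(x)$ yields a well-defined point $z\in[0,1]$ with $(x,z)\in A$ and $(z,y)\in B$, and crucially $z$ depends only on $A$ and $x$, not on $B$. So if $C_k(A,B_1)=C_k(A,B_2)$, then the assignment $(x,y)\mapsto(z,y)$ produces the same subset of $[0,1]\times[0,1]$ lying inside both $B_1$ and $B_2$. Because $\pi_2|_A$ is a homeomorphism onto $[0,1]$, as $x$ varies over the first projection of $C_k(A,B_i)$ the coordinate $z$ sweeps out all of $\pi_1^k(B_i)$; since each $B_i$ is the graph of a homeomorphism over $\pi_1^k(B_i)$, this common image is all of $B_1$ and all of $B_2$, forcing $B_1=B_2$.

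The remaining work is essentially bookkeeping: verifying that the parametrization of $C_k(A,B)$ indeed matches this ``sandwich'' description (which is immediate from its definition in terms of the homeomorphisms $(\pi_1|_A)^{-1}$ and $(\pi_1^k|_B)^{-1}$), and that the domains and codomains of the various restrictions of $\pi_1,\pi_2,\pi_1^k,\pi_2^k$ line up. No deep obstacle is expected; the hypothesis $\pi_2(A)=[0,1]$ is used twice, first to eliminate the constraint on $\pi_1^k(B)$, and second to guarantee that the $z$-coordinate map has enough range to recover $B$ in full.
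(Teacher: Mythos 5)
Your proof is correct, and it is in fact more complete than the paper's own argument. The paper's entire proof of this lemma is your surjectivity step: since $\pi_2(A)=[0,1]$, one has $\mathrm{Int}(\pi_2(A)\cap\pi_1^k(B))=\mathrm{Int}(\pi_1^k(B))\neq\emptyset$ for every $B\in\mathcal{M}_k$, hence $C_k(A,B)\in\mathcal{C}^k(A)$ for every $B$. The paper stops there, which strictly speaking only shows that $\mathcal{C}^k(A)$ is the image of all of $\mathcal{M}_k$ under $B\mapsto C_k(A,B)$, i.e.\ $|\mathcal{C}^k(A)|\leq|\mathcal{M}_k|$; the injectivity of this correspondence, which the stated equality requires, is left implicit. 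Your second paragraph supplies exactly that missing half: from the set $C_k(A,B)$ alone one recovers $B$ by the substitution $(x,y)\mapsto\bigl(\pi_2\circ(\pi_1|_A)^{-1}(x),\,y\bigr)$, and the hypothesis $\pi_2(A)=[0,1]$ guarantees the intermediate coordinate $z$ sweeps out all of $\pi_1^k(B)$, so that the recovered set is all of $B$ (being the graph of a homeomorphism over $\pi_1^k(B)$) rather than a mere subarc. A slightly shorter route to the same injectivity, staying closer to the paper's toolkit, would be to observe that if $C_k(A,B_1)=C_k(A,B_2)$ then the recovered portions of $B_1$ and $B_2$ coincide on a set with nonempty interior and invoke Lemma~\ref{FEnt2}. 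It is worth noting that only the inequality $|\mathcal{C}^k(A)|\leq|\mathcal{M}_k|$ is used downstream in Lemma~\ref{FEnt6}, which is presumably why the authors did not address injectivity; but as a proof of the lemma as literally stated, yours is the complete one.
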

\begin{proof} If $B\in \mathcal{M}_k$, then \[\mbox{Int}(\pi^k_1(B)\cap\pi_2(A))=\mbox{Int}(\pi^k_1(B)\cap [0,1])=\mbox{Int}(\pi^k_1(B))\not=\emptyset.\] Hence, $C(A,B)\in \mathcal{C}^k(A)$.
\end{proof}

\begin{lemma}\label{FEnt5}
	If $[s_1,s_2]_1^P,[s_2,s_3]_1^P\in \mathcal{M}_1$ such that $\pi_2([s_1,s_2]_1^P)=[0,z]$ and $\pi_2([s_2,s_3]_1^P)=[z,1]$ (or vice versa), where $0<z<1$, then $|\mathcal{C}^k([s_1,s_2]_1^P)|+|\mathcal{C}^k([s_2,s_3]_1^P)|\leq|\mathcal{M}_k|+|(\pi^k_1)^{-1}(z)|$.
\end{lemma}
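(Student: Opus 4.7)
The idea is to rewrite the left-hand side as $|\mathcal{M}_k|$ plus a ``double-counted'' term, and then bound that term by injecting it into the fiber $(\pi_1^k)^{-1}(z)$.

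First, I would verify that for every $B \in \mathcal{M}_k$, $\pi_1^k(B)$ is a nondegenerate closed subinterval of $[0,1]$. Since $\pi_2([s_1,s_2]_1^P) = [0,z]$ and $\pi_2([s_2,s_3]_1^P) = [z,1]$ (or vice versa), the defining condition $\Int(\pi_2([s_j,s_{j+1}]_1^P) \cap \pi_1^k(B)) \neq \emptyset$ for membership in $\mathcal{C}^k([s_j,s_{j+1}]_1^P)$ becomes $\min \pi_1^k(B) < z$ for $j=1$, and $\max \pi_1^k(B) > z$ for $j=2$. Every nondegenerate subinterval of $[0,1]$ satisfies at least one of these, and both hold precisely when $z \in \Int(\pi_1^k(B))$. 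Inclusion--exclusion then yields
\[
|\mathcal{C}^k([s_1,s_2]_1^P)| + |\mathcal{C}^k([s_2,s_3]_1^P)| = |\mathcal{M}_k| + |\{B \in \mathcal{M}_k : z \in \Int(\pi_1^k(B))\}|.
\]

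Second, I would construct $\varphi\colon \{B \in \mathcal{M}_k : z \in \Int(\pi_1^k(B))\} \to (\pi_1^k)^{-1}(z)$ by $\varphi(B) = (z, y_B)$, where $y_B = \pi_2^k|_B \circ (\pi_1^k|_B)^{-1}(z)$ is the unique point on $B$ above $z$. Well-definedness is immediate because $\pi_1^k|_B$ is a homeomorphism onto $\pi_1^k(B) \ni z$. To prove $\varphi$ is injective, I would suppose $\varphi(B_1) = \varphi(B_2) = (z,y)$ with $B_1 \neq B_2$. Then $(z,y)$ is an interior point of each arc $B_i$, and I would argue that $B_1, B_2$ necessarily agree on a neighborhood of $(z,y)$, so $\Int(B_1 \cap B_2) \neq \emptyset$ and Lemma~\ref{FEnt2} forces $B_1 = B_2$. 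The local-agreement step relies on the specific form of $z$: by Lemma~\ref{lem:commoncritpt}, $s_2$ must be a critical point of $F_m$ (not of $F_n$), so $\pi_1(s_2) = F_m(s_2) \in \{0,1\}$ and the point $(F_m(s_2), z)$ sits on the vertical boundary of $I\times I$ inside $\Gamma$. I would propagate this boundary constraint through the chains of intermediate values $z = z_0^{(i)}, z_1^{(i)}, \ldots, z_k^{(i)} = y$ associated to each $B_i$, forcing the two chains to coincide locally. Injectivity then gives $|\{B : z \in \Int(\pi_1^k(B))\}| \leq |(\pi_1^k)^{-1}(z)|$, and combining with the first identity yields the claim.

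The main obstacle is precisely this injectivity in the second step. Distinct consistent monotone arcs of $\Gamma^k$ can in general cross transversely at interior points, so one cannot invoke a naive local-branch uniqueness at $(z,y)$; the argument must exploit the special location of $z$, encoded in $s_2$ lying on the vertical boundary of $I \times I$ inside $\Gamma$. Once the local rigidity at $(z,y)$ is in hand, the remainder is direct bookkeeping via the inclusion--exclusion identity of the first step.
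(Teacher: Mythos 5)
Your proposal is structurally identical to the paper's proof: your inclusion--exclusion identity is exactly the paper's partition of $\mathcal{M}_k$ into $\{B:\pi_1^k(B)\subset[0,z]\}$, $\{B:\pi_1^k(B)\subset[z,1]\}$ and $\mathcal{P}_{\mbox{both}}=\{B: z\in \Int(\pi_1^k(B))\}$, and your second step is the paper's bound $|\mathcal{P}_{\mbox{both}}|\leq |(\pi_1^k)^{-1}(z)|$ via the assignment $B\mapsto (z,y_B)$ and Lemma~\ref{FEnt2}. Where you go beyond the paper is in flagging injectivity of this assignment as the real issue, and you are right to do so: the paper's one-line appeal to Lemma~\ref{FEnt2} conceals exactly this point, and without using the special position of $z$ the inequality is simply false. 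Concretely, in the paper's own Example~\ref{ex:consistent} ($\Gamma=T_2^{-1}\circ T_3$, $k=2$) take $z=1/9$: the four arcs $C_1(A,A)$, $C_1(A,D)$, $C_1(D,B)$, $C_1(D,C)$ all have $1/9$ in the interior of their first projections, yet the fiber $(\pi_1^2)^{-1}(1/9)$ consists of only the two points $(1/9,1/4)$ and $(1/9,3/4)$, through which these arcs cross pairwise transversally. So any correct proof must exploit that $z$ comes from a branch point of $\Gamma$, which the lemma's hypotheses encode and the paper's write-up never invokes.

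However, the mechanism you sketch for the key step does not work as described, so there is a genuine gap. You propose to propagate the boundary constraint $F_m(s_2)\in\{0,1\}$ through the intermediate values of the chains; but already the first intermediate value $w_1$ (where $(z,w_1)\in\Gamma$) satisfies $F_m(w_1)=F_m(F_n(t))=F_n(F_m(t))=F_n(z)\in(0,1)$, so it carries no boundary constraint and the propagation stalls after one step. The invariant that does propagate lives on the $F_n$-orbit of $z$, not on the chain values: since $s_2$ is a critical point of $F_m$ and $F_m, F_n$ commute, $F_m(z)=F_n(F_m(s_2))\in\{0,1\}$, so $z$ itself is a critical point of $F_m$; by Lemma~\ref{lem:commoncritpt} and openness of $F_n$, then $F_n(z)\in(0,1)$ and $F_m(F_n(z))=F_n(F_m(z))\in\{0,1\}$, so inductively every $F_n^j(z)$ is again a critical point of $F_m$, in particular lies in $(0,1)$. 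Consequently $(F_m^k)^{-1}(z)$ and $(F_m^k)^{-1}\bigl(F_n^k(z)\bigr)$ both have exactly $m^k$ points, while strong commutation gives $F_n^k\bigl((F_m^k)^{-1}(z)\bigr)=(F_m^k)^{-1}\bigl(F_n^k(z)\bigr)$; a surjection between finite sets of equal cardinality is a bijection, so each fiber point $(z,y)$ comes from a unique parameter, hence lies on a unique (monotone) local branch of $\Gamma^k$. Two arcs of $\mathcal{P}_{\mbox{both}}$ through $(z,y)$ must then share a nondegenerate subarc of that branch, and only now does Lemma~\ref{FEnt2} give injectivity. In short: your step one is correct and matches the paper, your diagnosis of the missing step is sharper than the paper's own proof, but your proposed repair needs to be replaced by an argument of the above kind.
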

\begin{proof}
	Partition $\mathcal{M}_k$ into the following collections $\mathcal{P}_0=\{B\in\mathcal{M}_k: \pi_1^k(B)\subset [0,z]\}$, $\mathcal{P}_1=\{B\in\mathcal{M}_k: \pi_1^k(B)\subset [z,1]\}$ and $\mathcal{P}_{\mbox{both}}=\{B\in\mathcal{M}_k: z\in \mbox{Int}(\pi_1^k(B))\}$. Note that by Lemma \ref{FEnt2}, $| \mathcal{P}_{\mbox{both}}|\leq |(\pi^k_1)^{-1}(z)|$. So, it follows that  $|\mathcal{C}^k([s_1,s_2]_1^P)|+|\mathcal{C}^k([s_2,s_3]_1^P)|=| \mathcal{P}_{1}|+| \mathcal{P}_{\mbox{both}}|+| \mathcal{P}_{2}|+| \mathcal{P}_{\mbox{both}}|=|\mathcal{M}_k|+| \mathcal{P}_{\mbox{both}}|\leq|\mathcal{M}_k|+|(\pi^k_1)^{-1}(z)|$.
\end{proof}

\begin{lemma}\label{FEnt6} Suppose $n>m$ and $F_n\circ F_m^{-1}=F_m^{-1}\circ F_n=\Gamma$. Then $|\mathcal{M}_k|\leq(k+1)n^k$.
\end{lemma}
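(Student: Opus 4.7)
The plan is to prove the bound by induction on $k$. For the base case $k=1$, the $n-1$ critical points of $F_n$ together with the $m-1$ critical points of $F_m$ and the two endpoints of $I$ partition the unit interval into $n+m-1$ subintervals, each a monotone branch of $\Gamma$, so $|\mathcal{M}_1| = n+m-1 \leq 2n = (1+1)n^1$ since $m < n$.

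For the inductive step, I would first partition $\mathcal{M}_1$ into two types using Remark~\ref{rem:stuff}(2): each $F_n$-lap $[t_j, t_{j+1}]$ contains at most one critical point of $F_m$. This splits the $n$ $F_n$-laps into $n-m+1$ \emph{unsplit} laps (each giving a single monotone branch $A$ with $\pi_2(A) = F_n([t_j, t_{j+1}]) = [0,1]$ by openness of $F_n$) and $m-1$ \emph{split} laps, each contributing a pair of adjacent branches $\bigl\{[t_j, \widehat{t}_i]_1^P,\, [\widehat{t}_i, t_{j+1}]_1^P\bigr\}$ whose $\pi_2$-images meet at $z_i := F_n(\widehat{t}_i)$ and cover $[0,1]$.

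Applying Lemma~\ref{FEnt4} to each unsplit branch and Lemma~\ref{FEnt5} to each split pair yields
\[
|\mathcal{M}_{k+1}| \leq (n-m+1)|\mathcal{M}_k| + \sum_{i=1}^{m-1}\bigl(|\mathcal{M}_k| + |(\pi^k_1)^{-1}(z_i)|\bigr) = n|\mathcal{M}_k| + \sum_{i=1}^{m-1}|(\pi^k_1)^{-1}(z_i)|.
\]
To estimate the residual sum I would use the parameterization $\Gamma^k = \{(F_m^k(t), F_n^k(t)) : t \in I\}$, valid because strong commutativity gives $\Gamma^k = F_m^{-k} \circ F_n^k$. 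Every element of $(\pi^k_1)^{-1}(z)$ then has the form $(z, F_n^k(t))$ with $F_m^k(t) = z$, so $|(\pi^k_1)^{-1}(z)| \leq |F_m^{-k}(z)| \leq m^k$, since $F_m$ has $m$ surjective laps and hence $F_m^k$ has at most $m^k$ preimages of any point. Combining with the inductive hypothesis $|\mathcal{M}_k| \leq (k+1)n^k$,
\[
|\mathcal{M}_{k+1}| \leq n(k+1)n^k + (m-1)m^k \leq (k+1)n^{k+1} + n^{k+1} = (k+2)n^{k+1},
\]
where the second inequality uses $(m-1)m^k < n \cdot n^k$.

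No single step in this argument looks especially delicate: the recurrence drops out of Lemmas~\ref{FEnt4} and \ref{FEnt5} once the two types of branches are identified, and the only quantitative input is the bound $|(\pi^k_1)^{-1}(z)| \leq m^k$, which is immediate from the surjective-lap structure of $F_m$. The hypothesis $n > m$ enters at precisely one place, namely to absorb the low-order term $(m-1)m^k$ into $n^{k+1}$ and close the induction.
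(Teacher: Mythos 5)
Your proof is correct and structurally identical to the paper's: the same base case $|\mathcal{M}_1|=n+m-1\leq 2n$, the same partition of the $F_n$-laps into unsplit (``full'') and split ones, and the same use of Lemmas~\ref{FEnt4} and~\ref{FEnt5} to obtain the recurrence $|\mathcal{M}_{k+1}|\leq n|\mathcal{M}_k|+\sum_{i=1}^{m-1}|(\pi_1^k)^{-1}(z_i)|$. The one point of divergence is the estimate of the fiber term, and there your version is the better one. The paper asserts as a ``fact'' that $|(\pi_1^k)^{-1}(z)|\leq km$ for all $z$, and then absorbs a residual of $km$ into $(k+2)n^{k+1}$; but that assertion appears to be false for $k\geq 2$: by strong commutativity the fiber of $\Gamma^k$ over a generic $z$ is $F_m^{-k}(F_n^k(z))$, which has exactly $m^k$ points (for instance $n=3$, $m=2$, $k=3$ gives $8>6=km$). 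Your bound $|(\pi_1^k)^{-1}(z)|\leq m^k$, justified directly from the parameterization $\Gamma^k=\{(F_m^k(t),F_n^k(t)):t\in I\}$ and the $m^k$ laps of $F_m^k$, is the correct one, and your closing observation that the larger residual $(m-1)m^k$ is still dominated by $n^{k+1}$ precisely because $n>m$ is what keeps the induction alive. So your write-up not only proves the lemma but quietly repairs the quantitative flaw in the paper's own argument; the statement of Lemma~\ref{FEnt6} is unaffected, since the conclusion $|\mathcal{M}_{k+1}|\leq(k+2)n^{k+1}$ survives the corrected estimate.
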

\begin{proof} Let $\{\mathcal{P}_{\mbox{full}},\mathcal{P}_{\mbox{split}}\}$ be a partition of $\{[t_i,t_{i+1}]_1^P\}_{i=0}^{n-1}$ defined by \[\mathcal{P}_{\mbox{full}}=\{[t_i,t_{i+1}]_1^P:\mbox{ there are no critical points of } F_m \mbox{ in } [t_i, t_{i+1}]\}=\{A_{j}\}_{j=1}^{n+1-m},\]  \[\mathcal{P}_{\mbox{split}}=\{[t_i,t_{i+1}]_1^P: \mbox{ there is a (unique) critical point of } F_m \mbox{ in } (t_i, t_{i+1})\}=\{\widehat{A}_{d}\}_{d=1}^{m-1}.\]  Recall Remark~\ref{rem:stuff}, and note $|\mathcal{P}_{\mbox{split}}|=$ the number of critical points of $F_m$. Furthermore, if $\widehat{t}_d$ is the critical point of $F_m$ such that $\widehat{A}_d=[t_{i(d)},\widehat{t}_d]_1^P\cup[\widehat{t}_d, t_{i(d)_1}]_1^P$, then define $\widehat{A}^0_d=[t_{i(d)},\widehat{t}_d]^P_1$ and $\widehat{A}^1_d=[\widehat{t}_d, t_{i(d)_1}]_1^P$.\\
	
	So it follows that $|\mathcal{M}_1|=|\mathcal{P}_{\mbox{full}}|+2|\mathcal{P}_{\mbox{split}}|=n+1-m+2(m-1)=n+m-1\leq 2n$.
	
	Suppose that $|\mathcal{M}_k|\leq (k+1)n^k$. Note that it follows from Lemmas \ref{FEnt4} and \ref{FEnt5}, and the fact that $|(\pi_1^k)^{-1}(z)|\leq km$ for all $z\in [0,1]$ that
	
	\begin{eqnarray*}
		|\mathcal{M}_{k+1}|&\leq&|\mathcal{M}_k||\mathcal{P}_{\mbox{full}}|+|\mathcal{M}_k||\mathcal{P}_{\mbox{split}}|+km\\
		&\leq&(k+1)n^k(n+1-m)+(k+1)n^k(m-1)+km\\
		&\leq&(k+1)n^{k+1}+km\\
		&\leq&(k+2)n^{k+1},
	\end{eqnarray*}
which finishes the proof.
\end{proof}

The following proposition is similar to Lemma~3 in \cite{MisSl}. For $k\in\N$ and $0\leq i\leq k-1$ we define $h_{k-1}^i:=h_{k-1}\circ h_{k-2}\circ...\circ h_{i+1}$, where $h_{k-1}^{k-1}=id$. For a finite open cover $\mathcal{U}$ of $I$ we define $\vee_{i=0}^{k-1}h_{k-1}^i(\mathcal{U}):=\{\bigcap_{i=0}^{k-1}h_{k-1}^i(U_i)\not=\emptyset : U_i\in {\mathcal U}\}.$

\begin{proposition}\label{FEnt7} Let $\mathcal{U}$ be a finite  cover of open intervals of $[0,1]$ with no common endpoints, let $\{[a_i,b_i]\}_{i=1}^k$ be a collection of subintervals of $[0,1]$ and $\{h_i\}_{i=1}^{k-1}$ be a collection of homeomorphisms $h_i:[a_{i},b_{i}]\to [a_{i+1},b_{i+1}]$. Let ${\mathcal B}$ be a subcover of $\vee_{i=0}^{k-1}h_{k-1}^i(\mathcal{U})$  of minimal cardinality that covers $[a_k,b_k]$. Then $|\mathcal{B}|\leq k^2|\mathcal{U}|^2$.
\end{proposition}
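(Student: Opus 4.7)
My plan is to bound $|\mathcal{B}|$ by a direct count of endpoints of the pushed-forward covers inside $[a_k,b_k]$. Since each $h_i$ is a homeomorphism, the composition $h_{k-1}^i\colon [a_{i+1},b_{i+1}]\to [a_k,b_k]$ is a homeomorphism for every $0\le i\le k-1$. Hence each $h_{k-1}^i(\mathcal{U})$ is a cover of $[a_k,b_k]$ by relatively open subintervals with at most $|\mathcal{U}|$ members, contributing at most $2|\mathcal{U}|$ endpoints inside $(a_k,b_k)$. Letting $E\subset (a_k,b_k)$ denote the union of all these interior endpoints over $0\le i\le k-1$, we have $|E|\le 2k|\mathcal{U}|$.

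Next, I would observe that every element of $\vee_{i=0}^{k-1}h_{k-1}^i(\mathcal{U})$, being a finite intersection of open subintervals of $[a_k,b_k]$, is itself a relatively open subinterval of $[a_k,b_k]$ whose two endpoints lie in $E\cup\{a_k,b_k\}$. Now take the minimal subcover $\mathcal{B}=\{B_1,\ldots,B_N\}$ and order its elements by left endpoint. The standard chain argument---any nested pair $B_s\subseteq B_t$ would contradict minimality---forces both the left endpoints $\ell_1<\cdots<\ell_N$ and the right endpoints $r_1<\cdots<r_N$ to be strictly increasing. For every $s<N$ we have $a_k<r_s<r_{s+1}\le b_k$, so $r_s$ belongs to $E\cup\{a_k,b_k\}$ but is strictly between $a_k$ and $b_k$, forcing $r_s\in E$. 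The distinctness of the $r_s$ yields $N\le |E|+1\le 2k|\mathcal{U}|+1$.

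To conclude, I would verify $2k|\mathcal{U}|+1\le k^2|\mathcal{U}|^2$ by substituting $t=k|\mathcal{U}|$: the inequality $t^2-2t-1\ge 0$ holds for all $t\ge 3$. The three edge cases $(k,|\mathcal{U}|)\in\{(1,1),(1,2),(2,1)\}$ are immediate: when $|\mathcal{U}|=1$ the refinement has a single element, and when $k=1$ the refinement equals $\mathcal{U}$ restricted to $[a_1,b_1]$, giving $|\mathcal{B}|\le |\mathcal{U}|\le |\mathcal{U}|^2$.

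The main subtlety I expect is justifying that every refinement element really has its endpoints confined to the finite set $E\cup\{a_k,b_k\}$---this requires carefully tracking how pushforwards through the homeomorphisms $h_{k-1}^i$ act on endpoints---together with the minimality-based chain argument guaranteeing strict monotonicity of both the $\ell_s$ and the $r_s$. Both points are standard, and once they are in place the bound follows at once, with the $k^2|\mathcal{U}|^2$ estimate being substantially looser than the actual $2k|\mathcal{U}|+1$ we derive.
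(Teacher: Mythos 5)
Your argument is correct in substance but takes a genuinely different route from the paper's, and it in fact establishes a sharper bound. The paper works element by element: since every member $B$ of the join is an intersection $\bigcap_{i=0}^{k-1}h_{k-1}^i(U_i)$, its left endpoint is realized by one of the intersected intervals and its right endpoint by another, whence $B=h_{k-1}^{j_x}(U_x)\cap h_{k-1}^{j_y}(U_y)$; distinct elements of $\mathcal{B}$ then correspond to distinct quadruples $(j_x,U_x,j_y,U_y)$, of which there are at most $k^2|\mathcal{U}|^2$ (minimality of $\mathcal{B}$ is never really used). You instead count the interior endpoints of all the pushed-forward covers, at most $2k|\mathcal{U}|$ of them, and use minimality (no nesting) to inject the minimal subcover, up to one exception, into that endpoint set, obtaining the linear bound $2k|\mathcal{U}|+1$, which you then compare with $k^2|\mathcal{U}|^2$. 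Your bound is stronger; the paper's count is shorter because it needs no ordering or monotonicity of the subcover.

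One edge case in your chain argument needs a patch. The elements of the join are only \emph{relatively} open in $[a_k,b_k]$, so intervals of the form $[a_k,r)$ and $(a_k,r')$ (equal infima), or $(\ell,b_k)$ and $(\ell',b_k]$ (equal suprema), can both lie in a minimal subcover with neither containing the other; this occurs precisely when some $U\in\mathcal{U}$ has an endpoint mapped exactly onto $a_k$ or $b_k$ by the relevant composition, which the hypothesis ``no common endpoints'' (a condition internal to $\mathcal{U}$) does not exclude. So ``strictly increasing'' can fail, but only at the two ends of $[a_k,b_k]$: for an interior value $r\in(a_k,b_k)$, any two cover elements with supremum $r$ both omit $r$ and hence are nested, so at most one element has supremum $r$, while at most two elements can have supremum $b_k$ (one containing it, one not). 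Counting suprema therefore gives $|\mathcal{B}|\le |E|+2\le 2k|\mathcal{U}|+2$, and $2t+2\le t^2$ still holds for $t=k|\mathcal{U}|\ge 3$, so your conclusion survives with the same edge-case analysis for $t\le 2$. You should not feel bad about this gloss: the paper's own proof has the identical blind spot, since the asserted equality $B=h_{k-1}^{j_x}(U_x)\cap h_{k-1}^{j_y}(U_y)$ can fail at such a boundary point unless $U_y$ is chosen among the intersected intervals omitting the endpoint.
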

\begin{proof}
	Since each $h_i$ is a homeomorphism, each $B\in \mathcal{B}$ is an open interval of $[a_k,b_k]$. Let $\{x_B,y_B\}$ be the endpoints of $B$. Since each $h_i$ is a homeomorphism, there exist $j_x,j_y\in \{0,...,k-1\}$ and $U_x,U_y\in \mathcal{ U}$ such that $x_B$ is an endpoint of $h_{k-1}^{j_x}(U_x)$ and  $y_B$ is an endpoint of $h_{k-1}^{j_y}(U_y)$. It follows that $B= h_{k-1}^{j_x}(U_x)\cap h_{k-1}^{j_y}(U_y)$. Since there are $k$ choices for both $j_x$ and $j_y$ and $|\mathcal{U}|$ choices for $U_x$ and $U_y$, there are $k^2|\mathcal{U}|^2$ choices for each $B$. Hence, $|\mathcal{B}|\leq k^2|\mathcal{U}|^2$. 
\end{proof}

\begin{corollary}\label{FEnt8} Let  $\{[a_i,b_i]\}_{i=1}^k$ be a collection of subintervals of $[0,1]$ and $\{h_i\}_{i=1}^{k-1}$ be a collection of homeomorphisms $h_i:[a_{i},b_{i}]\to [a_{i+1},b_{i+1}]$. Suppose $S\subset [0,1]^k$ is an $(k,\eps)$-separated set such that $h_i(x_i)=x_{i+1}$ for each $i$ and $( x_1,x_2,...,x_k )\in S$. Then $|S|\leq k^2(\frac{2}{\eps})^2$.
\end{corollary}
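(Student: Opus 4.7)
The plan is to apply Proposition \ref{FEnt7} (or FEnt7 by the current labeling) to a carefully chosen cover $\mathcal{U}$ of $[0,1]$ and then build an injection from $S$ into the resulting minimal subcover $\mathcal{B}$. The cardinality bound on $\mathcal{B}$ from the previous proposition then immediately yields the desired bound on $|S|$.

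First I would fix a finite open cover $\mathcal{U}$ of $[0,1]$ by open intervals of diameter strictly less than $\eps$, with no common endpoints, and with $|\mathcal{U}|\leq 2/\eps$. This is easily arranged by taking $\lceil 2/\eps\rceil$ evenly spaced intervals of length slightly less than $\eps$ and perturbing endpoints generically so that no two coincide (the uniform continuity of the $h_i$ is not needed at this stage). With this cover in hand, applying Proposition \ref{FEnt7} to the given collection $\{h_i\}_{i=1}^{k-1}$ produces a subcover $\mathcal{B}$ of $\bigvee_{i=0}^{k-1}h_{k-1}^i(\mathcal{U})$ that covers $[a_k,b_k]$ and satisfies $|\mathcal{B}|\leq k^2|\mathcal{U}|^2\leq k^2(2/\eps)^2$.

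Next I would define a map $\phi\colon S\to\mathcal{B}$ by sending each orbit $(x_1,\ldots,x_k)\in S$ to some $B\in\mathcal{B}$ containing $x_k$; such a $B$ exists since $x_k\in[a_k,b_k]$ and $\mathcal{B}$ covers $[a_k,b_k]$. The key step is to verify that $\phi$ is injective. Suppose $\phi((x_1,\ldots,x_k))=\phi((y_1,\ldots,y_k))=B$. By construction of the join, there exist $U_0,U_1,\ldots,U_{k-1}\in\mathcal{U}$ with $B=\bigcap_{i=0}^{k-1}h_{k-1}^i(U_i)$. Since $h_{k-1}^i(x_{i+1})=x_k$ and $h_{k-1}^i(y_{i+1})=y_k$ (by the compatibility $h_j(x_j)=x_{j+1}$), the fact that $x_k,y_k\in h_{k-1}^i(U_i)$ forces $x_{i+1},y_{i+1}\in U_i$ for every $i\in\{0,\ldots,k-1\}$. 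Because $\operatorname{diam}(U_i)<\eps$, this gives $|x_j-y_j|<\eps$ for every $j\in\{1,\ldots,k\}$, contradicting the assumption that $S$ is $(k,\eps)$-separated.

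Hence $\phi$ is injective and $|S|\leq|\mathcal{B}|\leq k^2(2/\eps)^2$, completing the proof. The only genuinely delicate point I anticipate is the book-keeping in the injectivity argument, namely that membership of $x_k$ in $h_{k-1}^i(U_i)$ correctly translates back, via the homeomorphism $h_{k-1}^i$, into membership of $x_{i+1}$ in $U_i$; everything else is an essentially mechanical application of Proposition \ref{FEnt7}.
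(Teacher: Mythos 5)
Your proof is correct and takes essentially the same approach as the paper's: the same choice of an open cover $\mathcal{U}$ with $|\mathcal{U}|\le 2/\eps$ and mesh less than $\eps$, the same application of Proposition~\ref{FEnt7}, and the same pigeonhole step, which you phrase as injectivity of the map $S\to\mathcal{B}$ sending an orbit to an element of $\mathcal{B}$ containing its last coordinate. Your explicit check that $x_k\in h_{k-1}^i(U_i)$ pulls back, via injectivity of the homeomorphism $h_{k-1}^i$, to $x_{i+1}\in U_i$ merely spells out the step the paper states without detail.
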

\begin{proof} First, there exists a cover $\mathcal{U}$ of $I$ of open intervals with no common endpoints such that $|\mathcal{U}|\leq 2/\eps$ and $\mbox{mesh}(\mathcal{U})<\eps$. Let ${\mathcal B}$ be a subcover of $\vee_{i=0}^{k-1}h_{k-1}^i(\mathcal{U})$ of minimal cardinality that covers $[a_k,b_k]$. Suppose $( x_1,x_2,...,x_k ), ( y_1,y_2,...,y_k )\in [0,1]^k$ are such that $h_i(x_i)=x_{i+1}$ and $h_i(y_i)=y_{i+1}$ for each $i$. If there exists a $B\in \mathcal{B}$ such that $x_k, y_k\in B$, then for each $i$ there exists $U_i\in {\mathcal U}$ such that $x_i, y_i\in U_i$. Hence, $\max_{1\leq i\leq k}\{{d}(x_i,y_i)\}<\eps$.  Thus, it follows from the pigeon-hole principle and Proposition~\ref{FEnt7} that $|S|\leq |\mathcal{B}|\leq k^2(\frac{2}{\eps})^2$.
\end{proof}

\begin{lemma}\label{FEnt9}  Suppose $n>m$ and $F_n\circ F_m^{-1}=F_m^{-1}\circ F_n=\Gamma$. Let $S$ be a  $(k,\eps)$-separated set for $\Gamma$. Then $|S|\leq k^2(\frac{2}{\eps})^2(k+1)n^k$,
\end{lemma}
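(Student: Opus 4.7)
The plan is to partition $S$ according to the consistent monotone arc structure on $\Gamma^{k-1}$ developed earlier in this section, and then apply Corollary~\ref{FEnt8} to each class.

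First, to every orbit $(x_1,\ldots,x_k)\in S$ one associates the sequence $(A_1,\ldots,A_{k-1})$ of monotone branches of $\Gamma$ with $(x_i,x_{i+1})\in A_i$, fixing an arbitrary but consistent tie-breaking rule at the finitely many branch endpoints. By the inductive construction of $\mathcal{M}_{k-1}$ together with the uniqueness statement in Lemma~\ref{FEnt2}, each realizable sequence $(A_1,\ldots,A_{k-1})$ corresponds to a well-defined element of $\mathcal{M}_{k-1}$, so Lemma~\ref{FEnt6} gives at most $k\,n^{k-1}$ such sequences.

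Next, fix a sequence $(A_1,\ldots,A_{k-1})$ and view each $A_i$ as the graph of a homeomorphism $h_i\colon [a_i,b_i]\to [a_{i+1},b_{i+1}]$ between subintervals of $I$. The orbits of $S$ realizing this sequence are precisely points $(x_1,\ldots,x_k)\in\prod_{i=1}^k[a_i,b_i]$ with $x_{i+1}=h_i(x_i)$ for $1\leq i\leq k-1$, and they are $(k,\eps)$-separated as a subset of $[0,1]^k$. Hence Corollary~\ref{FEnt8} bounds the number of orbits of $S$ that realize any fixed sequence by $k^2(2/\eps)^2$.

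Combining the two estimates and using $k\,n^{k-1}\leq (k+1)n^k$ yields
\[ |S|\leq k\,n^{k-1}\cdot k^2\Bigl(\frac{2}{\eps}\Bigr)^2\leq (k+1)n^k\cdot k^2\Bigl(\frac{2}{\eps}\Bigr)^2, \]
as required. The main technical point is the tie-breaking at branch endpoints: when some $(x_i,x_{i+1})$ lies on the common boundary of two monotone branches, one must verify that any consistent branch-assignment rule only over-counts orbits and never omits them, which is harmless since the bound above already absorbs such overcounting.
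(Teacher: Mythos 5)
Your overall strategy is the same as the paper's: partition $S$ into classes of orbits that follow a common chain of homeomorphisms, bound each class by Corollary~\ref{FEnt8}, and bound the number of classes via Lemma~\ref{FEnt6} (the paper indexes its classes by the arc $M\in\mathcal{M}_k$ containing $(x_1,x_k)$, you index them by branch itineraries, which is in fact precisely the form in which Corollary~\ref{FEnt8} applies). The per-itinerary bound is sound: restricting the branch homeomorphisms to the successive composition domains produces the required chain $h_i\colon[a_i,b_i]\to[a_{i+1},b_{i+1}]$, so each itinerary class contains at most $k^2(2/\eps)^2$ orbits.

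The gap is the claim that every realizable itinerary ``corresponds to a well-defined element of $\mathcal{M}_{k-1}$.'' Membership in $\mathcal{M}_{k-1}$ requires more than realizability: the inductive construction only admits $C_j(A,B)$ when $\mathrm{Int}(\pi_2(A)\cap\pi_1^j(B))\neq\emptyset$, so an itinerary belongs to $\mathcal{M}_{k-1}$ exactly when its full composition domain has nonempty interior. An itinerary whose composition domain is a \emph{single point} is still realized --- by exactly one orbit, the one passing through that point --- yet it corresponds to no element of $\mathcal{M}_{k-1}$, so any orbit assigned such an itinerary is \emph{omitted} from your count of at most $k\,n^{k-1}$ classes. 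This is the reverse of the risk you address in your closing remark: overcounting is indeed harmless, but omission is not, and an ``arbitrary but consistent'' tie-breaking rule can perfectly well assign an orbit a degenerate itinerary. What is missing is the orbit-level analogue of Proposition~\ref{FEnt1}(2): every orbit follows at least one itinerary whose composition domain has nonempty interior. This is true but needs proof; one can induct along the orbit, keeping the image interval $E_j$ of the prefix composition nondegenerate: if $x_{j+1}\in(0,1)$ is an endpoint of the $\pi_1$-projection of a branch through $(x_{j+1},x_{j+2})$, then the corresponding parameter is a critical point of $F_n$, two branches through that point have $\pi_1$-projections extending to opposite sides of $x_{j+1}$, and one of them matches the side on which $E_j$ lies; if $x_{j+1}\in\{0,1\}$, then $E_j$ and all relevant $\pi_1$-projections extend to the same side. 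With that lemma your assignment can always choose a nondegenerate itinerary; injectivity into $\mathcal{M}_{k-1}$ then follows from Lemma~\ref{FEnt2} as you say, and the estimate $k\,n^{k-1}\cdot k^2(2/\eps)^2\leq (k+1)n^k\cdot k^2(2/\eps)^2$ completes the proof.
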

\begin{proof}
	Let $S$ be a $(k,\eps)$-separated set. Let $M\in \mathcal{M}_k$ and define $S(M)=\{( x_1,x_2,...,x_k )\in S: (x_1,x_k)\in M\}$. Then there exist a collection of homeomorphisms $h_i\colon[p_i,q_i]\to [p_{i+1},q_{i+1}]$ such that 
	\begin{enumerate}
		\item $\pi^k_1(M)=[p_1,q_1]$
		\item $\pi^k_2(M)=[p_{k},q_{k}]$
		\item $x_i\in [p_i,q_i]$ for each $( x_1,x_2,...,x_k )\in S(M)$
		\item $h_i(x_i)=x_{i+1}$ for each $( x_1,x_2,...,x_k )\in S(M)$
	\end{enumerate} 
	Then it follows from Corollary~\ref{FEnt8}, that $|S(M)|\leq  k^2(\frac{2}{\eps})^2$. Hence $|S|\leq \sum_{M\in \mathcal{M}_k}|S(M)|\leq k^2(\frac{2}{\eps})^2(k+1)n^k$ by Lemma \ref{FEnt6}.
\end{proof}

\begin{theorem}\label{thm:upperbound} Suppose $n>m$ and $F_n\circ F_m^{-1}=F_m^{-1}\circ F_n=\Gamma$, then $\mbox{Ent}(\Gamma)\leq\log(n)$.
\end{theorem}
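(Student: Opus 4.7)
The plan is to combine Lemma~\ref{FEnt9} directly with the definition of entropy; essentially all the hard work has already been done in the preceding lemmas, and what remains is a routine asymptotic calculation.

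First I would fix $\varepsilon > 0$ and let $s_{k,\varepsilon}(\Gamma)$ denote the maximum cardinality of a $(k,\varepsilon)$-separated set for $\Gamma$. Lemma~\ref{FEnt9} then gives immediately
\[
s_{k,\varepsilon}(\Gamma) \;\leq\; k^2\Bigl(\frac{2}{\varepsilon}\Bigr)^{\!2}(k+1)\,n^k.
\]
Taking $\frac{1}{k}\log$ of both sides yields
\[
\frac{1}{k}\log\bigl(s_{k,\varepsilon}(\Gamma)\bigr) \;\leq\; \frac{2\log k}{k} + \frac{2\log(2/\varepsilon)}{k} + \frac{\log(k+1)}{k} + \log(n).
\]
The first three terms tend to $0$ as $k \to \infty$, so $\limsup_{k\to\infty}\frac{1}{k}\log(s_{k,\varepsilon}(\Gamma)) \leq \log(n)$.

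Since this bound holds uniformly in $\varepsilon$, taking $\varepsilon \to 0$ preserves the inequality, so
\[
\Ent(\Gamma) \;=\; \lim_{\varepsilon \to 0}\limsup_{k\to\infty}\frac{1}{k}\log\bigl(s_{k,\varepsilon}(\Gamma)\bigr) \;\leq\; \log(n),
\]
which is the desired conclusion. There is no genuine obstacle here, since the entire combinatorial difficulty (bounding the number of consistent monotone arcs of $\Gamma^k$ and bounding the number of separated orbits lying in a single such arc) has been absorbed into Lemmas~\ref{FEnt6} and~\ref{FEnt9}; the only subtlety worth noting is that one must apply $\limsup$ before taking $\varepsilon \to 0$, but since the $\varepsilon$-dependent term $\frac{2\log(2/\varepsilon)}{k}$ vanishes in the $\limsup$ for each fixed $\varepsilon$, this causes no issue.
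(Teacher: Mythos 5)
Your proposal is correct and follows essentially the same route as the paper's own proof: both fix $\eps>0$, invoke Lemma~\ref{FEnt9} to bound $s_{k,\eps}(\Gamma)$ by $k^2(\tfrac{2}{\eps})^2(k+1)n^k$, observe that the polynomial and $\eps$-dependent terms vanish in $\limsup_{k\to\infty}\frac{1}{k}\log(\cdot)$, and then let $\eps\to 0$. Nothing is missing; this is exactly the routine asymptotic calculation the paper carries out.
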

\begin{proof} Let $\eps>0$. Let $S_k$ be a $(k,\eps)$-separated set of largest cardinality. Then by Lemma \ref{FEnt9} we have
	\begin{eqnarray*}
	\limsup_{k\rightarrow \infty}\frac{\log(|S_k|)}{k} &\leq& \limsup_{k\rightarrow \infty}\frac{\log(k^2(\frac{2}{\eps})^2(k+1)n^k))}{k}\\
	&=&\log(n)+\limsup_{k\rightarrow \infty}\frac{\log(k^2(\frac{2}{\eps})^2(k+1))}{k}\\
	&=&\log(n).
	\end{eqnarray*}
	
	Then $\mbox{Ent}(\Gamma)=\lim_{\eps\rightarrow \infty}\limsup_{k\rightarrow \infty}\frac{\log(|S_k|)}{k}\leq \log(n)$.
\end{proof}

\begin{theorem}\label{thm:main}
	Let $F_n\circ F_m^{-1}=F_m^{-1}\circ F_n$. Then $$\Ent(F_n\circ F_m^{-1})=\max\{\log(n), \log(m)\}.$$
\end{theorem}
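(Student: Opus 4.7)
The plan is to combine the lower bound already established in Proposition~\ref{prop:lowerbound} with the upper bound of Theorem~\ref{thm:upperbound}, treating the two cases $n > m$ and $m > n$ symmetrically via inverses. The hard work has already been done: Proposition~\ref{prop:lowerbound} supplies $\Ent(F_n\circ F_m^{-1})\geq\max\{\log n,\log m\}$ unconditionally (it uses horseshoes inherited from $F_n$ and $F_m$ together with the identity $(F_m^{-1}\circ F_n)^k=F_{m^k}^{-1}\circ F_{n^k}$, which in turn relies on the strong-commuting hypothesis), and Theorem~\ref{thm:upperbound} supplies the matching upper bound $\log(n)$ in the case $n>m$.

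First, I would invoke Proposition~\ref{prop:lowerbound} to record the inequality
\[
  \Ent(F_n\circ F_m^{-1})\ \geq\ \max\{\log n,\log m\}.
\]
For the reverse inequality, split into cases. If $n>m$, apply Theorem~\ref{thm:upperbound} directly to obtain $\Ent(F_n\circ F_m^{-1})\leq\log n=\max\{\log n,\log m\}$. If $n<m$, then $(F_n\circ F_m^{-1})^{-1}=F_m\circ F_n^{-1}$, and since $F_n$ and $F_m$ strongly commute we also have $F_m\circ F_n^{-1}=F_n^{-1}\circ F_m$, so Theorem~\ref{thm:upperbound} applies (with roles of $n$ and $m$ swapped) to give $\Ent(F_m\circ F_n^{-1})\leq\log m$. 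Using the fact that entropy of a set-valued map agrees with that of its inverse (as cited from \cite{KelTen} in the proof of Proposition~\ref{prop:lowerbound}), we get $\Ent(F_n\circ F_m^{-1})=\Ent(F_m\circ F_n^{-1})\leq\log m=\max\{\log n,\log m\}$.

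The only remaining issue is the (somewhat degenerate) case $n=m$: here Lemma~\ref{lem:commoncritpt} forces the critical-point sets to be disjoint, but $F_n$ and $F_m$ are both open maps with $n-1$ critical points, and the strong-commuting condition is restrictive enough that this case should either be void or be absorbable into the same counting argument as in Theorem~\ref{thm:upperbound}; if necessary, one can run the proof of Theorem~\ref{thm:upperbound} verbatim replacing the assumption $n>m$ by $n\geq m$ (the bound $|\mathcal{M}_k|\leq(k+1)n^k$ in Lemma~\ref{FEnt6} and the preceding combinatorial estimates go through without change since none of them exploit strict inequality in an essential way beyond orienting which of $n,m$ plays the dominant role). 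Combining the two inequalities yields the claimed equality.

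The main obstacle in the whole argument is not in this theorem but in Theorem~\ref{thm:upperbound}, whose core estimate $|\mathcal{M}_k|\leq(k+1)n^k$ on the number of monotone branches of $\Gamma^k$ is the nontrivial ingredient; the present theorem is essentially a packaging result, with the only delicate point being the use of $\Ent(F)=\Ent(F^{-1})$ to pass between the $n>m$ and $m>n$ cases.
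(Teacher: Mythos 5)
Your proposal is correct and follows essentially the same route as the paper, whose proof of Theorem~\ref{thm:main} is literally the one-line combination of Proposition~\ref{prop:lowerbound} (lower bound) and Theorem~\ref{thm:upperbound} (upper bound). The only difference is that you make explicit what the paper leaves implicit: the case $n<m$ is handled by passing to the inverse $F_m\circ F_n^{-1}$ and using $\Ent(F)=\Ent(F^{-1})$, and the case $n=m$ is degenerate (in fact vacuous, since by Lemma~\ref{lem:commoncritpt} and the counting in Lemma~\ref{lem:crit} two strongly commuting open maps cannot have the same number of critical points), so your write-up is, if anything, more careful than the paper's.
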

\begin{proof}
	Follows from Proposition~\ref{prop:lowerbound} and Theorem~\ref{thm:upperbound}.
\end{proof}

\begin{remark}
	Let $n\in\N$, $F_n$ be an open map with $n-1$ critical points which is {\em conjugate} to the symmetric tent map $T_n$, \ie there is a homeomorphism $h\colon I\to I$ such that $F_n=h^{-1}\circ T_n\circ h$. Then for every $m\in\N$ which is relatively prime to $n$ there exists $F_m$ as in Theorem~\ref{thm:main}. We simply take $F_m:=h^{-1}\circ T_m\circ h$, and use Propositions~3.3 and 3.4 from \cite{AM-stronglycomm}.
\end{remark}

\section{Strongly commuting interval maps}\label{sec:stronglycomm}

We assume that $f,g\colon I\to I$ are continuous, piecewise monotone, and satisfy $g\circ f^{-1}=f^{-1}\circ g$. We will show that $\Ent(g\circ f^{-1})=\max\{\Ent(f),\Ent(g)\}$. Recall the following theorem from \cite{AM-stronglycomm}:

\begin{theorem}[Theorem~5.21 in \cite{AM-stronglycomm}]\label{thm:strcom}
	Let $f,g\colon I\to I$ be piecewise monotone maps such that $f^{-1}\circ g=g\circ f^{-1}$. Then there are $0=p_0<p_1<\ldots<p_k=1$ such that $[p_i,p_{i+1}]$ is invariant under $f^2$ and $g^2$ for every $i\in\{0,1,\ldots,k-1\}$, and such that one of the following occurs:
	\begin{itemize}
			\item[(i)] $f^2|_{[p_i,p_{i+1}]}$ and $g^2|_{[p_i,p_{i+1}]}$ are both open and non-monotone,
			\item[(ii)] $f^2|_{[p_i,p_{i+1}]}$ is monotone, or
			\item[(iii)] $g^2|_{[p_i,p_{i+1}]}$ is monotone.
	\end{itemize}
\end{theorem}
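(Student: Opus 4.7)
The plan is to construct the partition by coarse-to-fine refinement, using strong commutativity to control the process and to ensure that the trichotomy (i)--(iii) is achieved on each piece.

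First I would collect elementary consequences of the hypothesis $f^{-1}\circ g=g\circ f^{-1}$. Composing with $f$ on the left shows that $f$ and $g$ commute in the ordinary sense, $f\circ g=g\circ f$. Consequently, by Lemma~\ref{lem:commoncritpt}, the critical point sets $C_f$ and $C_g$ are disjoint finite subsets of $(0,1)$. Iterating the strong-commutation identity twice yields $(f^2)^{-1}\circ g^2=f^{-1}\circ g\circ f^{-1}\circ g=g\circ f^{-1}\circ f^{-1}\circ g=g^2\circ (f^2)^{-1}$, so $f^2$ and $g^2$ are themselves strongly commuting piecewise monotone maps; this lets me apply the same formalism to the squared maps.

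Next I would build the partition inductively. Start with the trivial partition $\{0,1\}$. Given a current partition $P$, if some interval $J=[p_i,p_{i+1}]$ violates the trichotomy, then both $f^2|_J$ and $g^2|_J$ are non-monotone and at least one of them, say $f^2|_J$, fails to be open; some monotone lap of $f^2|_J$ has image a proper subinterval $[a',b']\subsetneq J$. I would enlarge $P$ by the endpoints $a',b'$ together with any further points forced by the requirement that the partition respect $f^2$- and $g^2$-invariance (i.e., the union of finitely many $f^2$- and $g^2$-preimages and images of points already in $P$). The finiteness of this procedure---the crucial technical input---should follow from strong commutativity, which rigidifies the forward and preimage orbits of $C_f\cup C_g$; heuristically, the orbit closures of critical points under $\langle f,g\rangle$ modulo the strong-commutation relation form a finite Markov-like set, and only finitely many new partition points can be added at each stage.

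The principal obstacle is verifying the trichotomy on each interval of the final partition. Suppose $J$ is an interval on which $f^2|_J$ is non-monotone but not open. Then $f^2$ has a lap with image a proper subinterval $[a',b']\subsetneq J$, and by the refinement construction $a',b'$ would already have been adjoined to $P$, so $J$ could not have been a partition interval---contradiction. Thus $f^2|_J$ is either monotone or open, and symmetrically for $g^2|_J$. The one remaining case to rule out is $f^2|_J$ open but $g^2|_J$ non-monotone and not open; here I would apply $(f^2)^{-1}\circ g^2=g^2\circ (f^2)^{-1}$ directly, pulling a non-surjective $g^2$-lap back through the openness of $f^2|_J$ to transport the defect onto $f^2$ itself and contradict its openness, forcing $g^2|_J$ to be monotone. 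Combined with the invariance built into the construction, this yields the desired decomposition. The hard step throughout is the finiteness of the refinement; I expect it to rely on an analysis of common Markov-like structure developed in \cite{AM-stronglycomm}.
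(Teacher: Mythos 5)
This theorem is not proved in the present paper at all: it is imported verbatim as Theorem~5.21 of \cite{AM-stronglycomm}, so there is no internal argument to compare yours against; your proposal must stand on its own, and it does not. The decisive gap is the one you yourself flag: termination of the refinement. Nothing in your sketch bounds the number of points that must be adjoined, and the closure operation you invoke (``the union of finitely many $f^2$- and $g^2$-preimages and images of points already in $P$'') is not a finite process --- forward orbits of points under a piecewise monotone map are in general infinite, and ``orbit closures forming a finite Markov-like set modulo the strong-commutation relation'' is a heuristic, not a statement with a proof. Since the entire content of the theorem is that a \emph{finite} partition with these properties exists, deferring finiteness to a heuristic leaves the theorem unproved.

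There is also a structural flaw in the refinement itself. If adjacent intervals $[p_{i-1},p_i]$ and $[p_i,p_{i+1}]$ of the final partition are both invariant under $f^2$ and $g^2$, then $f^2(p_i)\in[p_{i-1},p_i]\cap[p_i,p_{i+1}]=\{p_i\}$, and likewise for $g^2$; that is, every interior partition point is forced to be a \emph{common fixed point} of $f^2$ and $g^2$. The points your procedure adjoins --- endpoints $a',b'$ of a non-surjective lap image, or images and preimages of earlier partition points --- have no reason to be common fixed points, so the sub-intervals they cut out cannot all be invariant, and the defect you are repairing simply reappears as a failure of invariance rather than of the trichotomy. A correct argument must locate the decomposition at common fixed points of $f^2$ and $g^2$ from the outset, which is a different mechanism from lap-image refinement; this, together with the mixed case ($f^2|_J$ open, $g^2|_J$ non-monotone and non-open) that you dispose of with an unproven ``transport the defect'' remark, is where the real work behind this theorem lies.
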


Note that $\Ent(g^2\circ f^{-2})=\max\{\Ent(g^2\circ f^{-2}|_{[p_i,p_{i+1}]}): i\in\{0,1,\ldots,k-1\}\}$. 

\begin{theorem}[Theorem~1 in \cite{MisSl}]\label{thm:Mis}
	If $f\colon I\to I$ is a piecewise monotone map, then 
	$$\Ent(f)=\lim_{n\to\infty}\frac 1n\log(c_n),$$
	where $c_n$ denotes the number of critical points of $f^n$.
\end{theorem}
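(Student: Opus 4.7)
The plan is to prove both inequalities via the lap number $\ell_n=c_n+1$ of $f^n$, noting that $\lim\frac{1}{n}\log c_n=\lim\frac{1}{n}\log \ell_n$. The first ingredient is submultiplicativity $\ell_{n+m}\leq \ell_n\ell_m$, which holds because a lap of $f^{n+m}=f^m\circ f^n$ is obtained by further subdividing each lap $J$ of $f^n$ at the points of $J$ whose $f^n$-image is a critical point of $f^m$. Since $f^n|_J$ is a homeomorphism onto its image and that image meets at most $\ell_m-1$ critical points of $f^m$, each $J$ is cut into at most $\ell_m$ pieces. Fekete's lemma then produces the limit $L:=\lim\frac{1}{n}\log \ell_n$.

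For the upper bound $\Ent(f)\leq L$, I would fix a finite open cover $\mathcal U$ of $I$ by open intervals of mesh $<\varepsilon$ with no common endpoints, and estimate the minimal subcover of the refinement $\bigvee_{i=0}^{n-1}f^{-i}(\mathcal U)$. Each element of this refinement is an open interval contained in a single lap of $f^n$ whose endpoints must lie among the $c_n$ critical points of $f^n$ together with the preimages under $f^i$, $i<n$, of endpoints of elements of $\mathcal U$---giving at most $O(n|\mathcal U|\,\ell_n)$ candidates, and hence a subcover of cardinality of the same order. Dividing $\log$ by $n$, sending $n\to\infty$, and then $\varepsilon\to 0$, completes this direction; this is essentially the same style of counting argument used in Proposition~\ref{FEnt7}.

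The lower bound $\Ent(f)\geq L$ is the main obstacle. The naive plan is: in each lap $J$ of $f^n$ pick a representative $x_J\in J$, and note that distinct laps force the orbits $x_J,f(x_J),\dots,f^{n-1}(x_J)$ to visit distinct sequences of cells of the partition of $I$ by critical points of $f$. If $\varepsilon$ is smaller than the minimum spacing between consecutive critical points of $f$, this would immediately produce an $(n,\varepsilon)$-separated set of cardinality $\ell_n$. The genuine difficulty is that many laps may have iterated images that become arbitrarily short, so that orbits starting in different laps can remain $\varepsilon$-close forever and the itinerary coding fails. The Misiurewicz--Szlenk remedy is a Markov-style pruning: one discards laps whose intermediate images $f^i(J)$, $i<n$, are shorter than a fixed $\delta>0$, and uses a distortion/expansion estimate to show that the surviving ``large-image'' laps still grow at the full rate $L$. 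On this subfamily the itinerary argument upgrades to genuine $(n,\varepsilon)$-separation, and this pruning step---together with the distortion bound that makes it work---is the technically delicate part and the place where the piecewise monotone hypothesis is used in an essential way.
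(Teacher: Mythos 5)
First, a point of reference: the paper does not prove this statement at all --- it is quoted verbatim as Theorem~1 of \cite{MisSl} and used as a black box (its only role is in the proofs of Proposition~\ref{prop:composition} and Corollary~\ref{cor:seconds}). So there is no ``paper proof'' to match your argument against; your proposal has to stand on its own as a proof of the Misiurewicz--Szlenk theorem, and judged that way it has a genuine gap. The first two ingredients are fine: submultiplicativity of the lap numbers $\ell_{n+m}\leq \ell_n\ell_m$ plus Fekete gives the limit $L$, and your upper-bound count (endpoints of elements of the refined cover come either from critical points of $f^n$ or from $f^{-i}$-preimages of endpoints of $\mathcal U$, so the minimal subcover has polynomially-times-$\ell_n$ many elements) is sound and is indeed the same style of argument as Proposition~\ref{FEnt7}--Lemma~\ref{FEnt9}; the only slip there is cosmetic, namely that elements of $\bigvee_{i=0}^{n-1}f^{-i}(\mathcal U)$ are unions of intervals rather than intervals, which is repaired by counting connected components.

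The gap is the lower bound $\Ent(f)\geq L$, which you correctly flag as ``the main obstacle'' but then do not prove: you delegate it to a ``Markov-style pruning'' of laps with short intermediate images ``together with the distortion/expansion estimate that makes it work.'' Two problems. First, no distortion estimate is available in this setting: a piecewise monotone map in the sense of this paper is merely continuous and injective on each lap, with no smoothness whatsoever, so slopes (where they exist) can vary arbitrarily and there is no tool that converts ``the image of a lap is long'' into expansion or bounded distortion; the genuine Misiurewicz--Szlenk argument is purely combinatorial/topological (coding by the lap partition and dealing with the discontinuity of the itinerary map, equivalently constructing horseshoes for iterates whose size realizes the lap growth rate). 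Second, even granting the pruning, the separation claim does not follow as stated: if $J\neq J'$ are laps of $f^n$ with all intermediate images of length $\geq\delta$, their representative orbits still need not be $\varepsilon$-separated, because at the time $i$ when $f^i(J)$ and $f^i(J')$ lie in different laps of $f$, the chosen points $f^i(x_J)$ and $f^i(x_{J'})$ can both be arbitrarily close to the common critical point; keeping representatives uniformly away from lap boundaries at all times simultaneously is precisely the delicate part. So the proposal proves the easy inequality and names, but does not supply (and partially misattributes), the hard one; as a standalone proof it is incomplete.
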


\begin{proposition}\label{prop:composition}
	If $f\colon I\to I$ is a piecewise monotone map, and $h\colon I\to I$ is a homeomorphism such that $f\circ h=h\circ f$, then $\Ent(f\circ h)=\Ent(h\circ f)=\Ent(f)$.
\end{proposition}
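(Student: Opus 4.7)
The plan is to reduce the entropy computation to a critical-point count via Theorem~\ref{thm:Mis} (Misiurewicz--Szlenk). Since $h\colon I\to I$ is a self-homeomorphism of the interval, it is strictly monotone; composition of a piecewise monotone map with a monotone homeomorphism is piecewise monotone, so both $f\circ h$ and $h\circ f$ lie in the class to which Theorem~\ref{thm:Mis} applies.

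The key observation is that commutativity passes to iterates. Arguing by induction on $n$, from $f\circ h=h\circ f$ one obtains
$$(f\circ h)^{n}=f^{n}\circ h^{n}=h^{n}\circ f^{n},$$
since each ``swap'' of adjacent factors is justified by $f\circ h=h\circ f$. Thus the $n$-th iterate of $f\circ h$ factors through the iterate $f^{n}$ of the original map, post- or pre-composed with the homeomorphism $h^{n}$.

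Next, because $h^{n}\colon I\to I$ is again a homeomorphism, it is monotone and carries local monotonicity of $f^{n}$ precisely to local monotonicity of $f^{n}\circ h^{n}$. Concretely, $x$ is a critical point of $f^{n}\circ h^{n}$ if and only if $h^{n}(x)$ is a critical point of $f^{n}$; since $h^{n}$ is a bijection, the bijection $x\mapsto h^{n}(x)$ sets up an equality between the cardinality of critical points of $(f\circ h)^{n}$ and that of $f^{n}$. The same reasoning, using $(h\circ f)^{n}=h^{n}\circ f^{n}$, shows that the number of critical points of $(h\circ f)^{n}$ equals the number of critical points of $f^{n}$. Denoting these counts by $c_{n}$ in all three cases, Theorem~\ref{thm:Mis} then gives
$$\Ent(f\circ h)=\lim_{n\to\infty}\tfrac{1}{n}\log c_{n}=\Ent(f)=\lim_{n\to\infty}\tfrac{1}{n}\log c_{n}=\Ent(h\circ f),$$
which is the desired conclusion.

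There is essentially no real obstacle here; the argument is a short, direct application of Theorem~\ref{thm:Mis} once one notices that commutativity yields the clean factorization $(f\circ h)^{n}=f^{n}\circ h^{n}$. The only mild point of care is to justify that pre- or post-composing a piecewise monotone map with a homeomorphism of $I$ preserves both the piecewise monotone property (so Theorem~\ref{thm:Mis} is applicable) and the exact count of critical points of every iterate (so the limits agree).
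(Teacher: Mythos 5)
Your proof is correct and takes essentially the same approach as the paper's: both reduce to the Misiurewicz--Szlenk critical-point count (Theorem~\ref{thm:Mis}), use commutativity to get $(f\circ h)^n=f^n\circ h^n$, and observe that the homeomorphism $h^n$ puts the critical points of $(f\circ h)^n$ in bijection with those of $f^n$. The only difference is that you handle $h\circ f$ explicitly via $(h\circ f)^n=h^n\circ f^n$, a case the paper leaves implicit.
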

\begin{proof}
	Note that since $f$ and $h$ commute, we have $(f\circ h)^n=f^n\circ h^n$. Furthermore, if $C_n$ denotes the set of critical points of $f^n$, then $h^{-n}(C_n)$ is the set of critical points of $f^n\circ h^n$, and since $h^n$ is a homeomorphism $|C_n|=|h^{-n}(C_n)|$. Now Theorem~\ref{thm:Mis} gives
	$$\Ent(f\circ h)=\lim_{n\to\infty}\frac 1n\log(|h^{-n}(C_n)|)=\lim_{n\to\infty}\frac 1n\log(|C_n|)=\Ent(f).$$
\end{proof}

\begin{corollary}\label{cor:seconds}
	If $f,g\colon I\to I$ are piecewise monotone maps such that $g\circ f^{-1}=f^{-1}\circ g$, then $\Ent(g^2\circ f^{-2})=\max\{\Ent(g^2),\Ent(f^2)\}$.
\end{corollary}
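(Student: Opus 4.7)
My plan is to apply Theorem~\ref{thm:strcom} to $f,g$ in order to decompose $I$ into invariant subintervals $[p_0,p_1],\ldots,[p_{k-1},p_k]$ on each of which one of the three alternatives holds: (i) both $f^2,g^2$ are open non-monotone, (ii) $f^2$ is monotone, or (iii) $g^2$ is monotone. Squaring preserves strong commutation, so $g^2\circ f^{-2}=f^{-2}\circ g^2$. The first step is to argue that since every $[p_i,p_{i+1}]$ is invariant under both $f^2$ and $g^2$, the graph $\Gamma(g^2\circ f^{-2})$ splits cleanly along the partition, so that
\[
\Ent(g^2\circ f^{-2})=\max_{0\le i<k}\Ent\bigl(g^2\circ f^{-2}|_{[p_i,p_{i+1}]}\bigr),
\]
and similarly for $\Ent(f^2)$ and $\Ent(g^2)$. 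It then suffices to establish the claimed maximum formula on each subinterval.

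On a piece of type (i), I would rescale $[p_i,p_{i+1}]$ to $I$ and apply Theorem~\ref{thm:main} to the strongly commuting pair $F_n:=f^2|_{[p_i,p_{i+1}]}$, $F_m:=g^2|_{[p_i,p_{i+1}]}$, where $n-1,m-1$ are their respective numbers of critical points. This directly yields
\[
\Ent\bigl(g^2\circ f^{-2}|_{[p_i,p_{i+1}]}\bigr)=\max\{\log n,\log m\}=\max\bigl\{\Ent(f^2|_{[p_i,p_{i+1}]}),\Ent(g^2|_{[p_i,p_{i+1}]})\bigr\}
\]
by Misiurewicz--Szlenk. On a piece of type (ii), the monotone restriction $f^2|_{[p_i,p_{i+1}]}$ is a continuous monotone self-map of an invariant interval; once I show it is surjective it is a homeomorphism $h$, and $f^{-2}|_{[p_i,p_{i+1}]}=h^{-1}$ is single-valued. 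The identity $g^2\circ f^{-2}=f^{-2}\circ g^2$ then reads $g^2\circ h=h\circ g^2$, so Proposition~\ref{prop:composition} gives $\Ent(g^2\circ h^{-1})=\Ent(g^2|_{[p_i,p_{i+1}]})$, and since $\Ent(f^2|_{[p_i,p_{i+1}]})=0$ the required maximum formula follows. Case (iii) is symmetric. Taking the maximum over $i$ completes the argument.

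The main obstacle I anticipate is the careful bookkeeping in cases (ii) and (iii): I need the monotone restriction to be \emph{surjective} on its invariant interval so that $f^{-2}$ really restricts to a single-valued homeomorphism of $[p_i,p_{i+1}]$. This should follow from the way the partition is chosen in \cite{AM-stronglycomm} -- the endpoints $p_i$ should be fixed or swapped by both $f^2$ and $g^2$, forcing the monotone restriction to be onto -- but checking this rigorously against the construction in Theorem~\ref{thm:strcom} is the nontrivial step. Once surjectivity is verified, the remainder is a clean assembly of Theorem~\ref{thm:strcom}, Theorem~\ref{thm:main}, Proposition~\ref{prop:composition}, and the splitting of entropy over invariant components.
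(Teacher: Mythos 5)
Your proposal follows essentially the same route as the paper's own proof: decompose $I$ via Theorem~\ref{thm:strcom}, split the entropy of $g^2\circ f^{-2}$ (and of $f^2$, $g^2$) as a maximum over the invariant pieces, apply Theorem~\ref{thm:main} on the open non-monotone pieces, and use Proposition~\ref{prop:composition} on the monotone pieces (the paper handles that case by computing $\Ent(f^2\circ g^{-2}|_{[p_i,p_{i+1}]})$ and invoking $\Ent(F)=\Ent(F^{-1})$, which is just the mirror image of your conjugation argument). The surjectivity of the monotone restriction that you flag as the delicate point is likewise used implicitly by the paper, so it marks no real divergence between the two arguments.
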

\begin{proof}
	Let $0=p_0<p_1<\ldots<p_k=1$ be as in Theorem~\ref{thm:strcom}, and recall that $\Ent(g^2\circ f^{-2})=\max\{\Ent(g^2\circ f^{-2}|_{[p_i,p_{i+1}]}): i\in\{0,1,\ldots,k-1\}\}$. If $f^2|_{[p_i,p_{i+1}]}, g^2|_{[p_i,p_{i+1}]}$ are both open and non-monotone, then Theorem~\ref{thm:main} implies that 
	$$\Ent(g^2\circ f^{-2}|_{[p_i,p_{i+1}]}) = \max\{\Ent(g^2|_{[p_i,p_{i+1}]}),\Ent(f^2|_{[p_i,p_{i+1}]})\}.$$
	 If $g^2|_{[p_i,p_{i+1}]}$ is monotone, then $\Ent(g^2|_{[p_i,p_{i+1}]})=0$, and Proposition~\ref{prop:composition} implies that 
	$\Ent(f^2\circ g^{-2}|_{[p_i,p_{i+1}]})=\Ent(f^{2}|_{[p_i,p_{i+1}]})=\max\{\Ent(f^{2}|_{[p_i,p_{i+1}]}),0\}$. Since $\Ent(F)=\Ent(F^{-1})$, for every set-valued map $F\colon I\to I$, the claim follows. Similarly, if $f^2|_{[p_i,p_{i+1}]}$ is monotone, we get $\Ent(g^2\circ f^{-2}|_{[p_i,p_{i+1}]})=\max\{\Ent(g^2|_{[p_i,p_{i+1}]}),\Ent(f^2|_{[p_i,p_{i+1}]})\}$. Now, since we have $\Ent(f^2)=\max\{\Ent(f^2|_{[p_i,p_{i+1}]}): i\in\{0,1,\ldots,k-1\}\}$, and $\Ent(g^2)=\max\{\Ent(g^2|_{[p_i,p_{i+1}]}): i\in\{0,1,\ldots,k-1\}\}$, the proof is complete.
\end{proof}

\begin{corollary}\label{cor:main}
	If $f,g\colon I\to I$ are piecewise monotone maps such that $g\circ f^{-1}=f^{-1}\circ g$, then $\Ent(g\circ f^{-1})=\max\{\Ent(f),\Ent(g)\}$.
\end{corollary}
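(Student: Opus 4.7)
The plan is to split the equality into its two one-sided bounds: the lower bound follows cleanly from Corollary~\ref{cor:seconds} and Proposition~\ref{prop:entIter}, while the upper bound requires the finer decomposition of $I$ into $f,g$-invariant pieces alluded to in the introduction.

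For the lower bound, the key observation is that strong commutativity, $f^{-1}\circ g=g\circ f^{-1}$, lets the middle $f^{-1}$ and $g$ be swapped, so as set-valued maps one has
\[(g\circ f^{-1})^2 \;=\; g\circ(f^{-1}\circ g)\circ f^{-1} \;=\; g\circ(g\circ f^{-1})\circ f^{-1} \;=\; g^2\circ f^{-2}.\]
Proposition~\ref{prop:entIter} with $n=2$ therefore yields $\Ent(g\circ f^{-1})\ge \tfrac12\Ent(g^2\circ f^{-2})$. Combining this with Corollary~\ref{cor:seconds} and the identity $\Ent(h^2)=2\Ent(h)$ for any single-valued piecewise monotone interval map $h$ gives $\Ent(g\circ f^{-1}) \ge \max\{\Ent(f),\Ent(g)\}$.

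For the upper bound, I would invoke the strengthening of Theorem~\ref{thm:strcom} stated informally in the introduction and proved in \cite{AM-stronglycomm}: there exists a partition $0=q_0<q_1<\cdots<q_\ell=1$ such that every $J_j:=[q_j,q_{j+1}]$ is invariant under \emph{both} $f$ and $g$, and on each $J_j$ either (a) both $f|_{J_j}$ and $g|_{J_j}$ are open piecewise monotone, or (b) one of $f|_{J_j},g|_{J_j}$ is monotone (hence by invariance a homeomorphism of $J_j$). Since each $J_j$ is $f$- and $g$-invariant, $f^{-1}(J_j)=J_j$ up to endpoints, so $J_j$ is $(g\circ f^{-1})$-invariant and
\[\Ent(g\circ f^{-1}) \;=\; \max_j \Ent\bigl((g\circ f^{-1})|_{J_j}\bigr).\]
On a type-(a) piece, after affine rescaling $J_j$ to $I$, Theorem~\ref{thm:main} applies to the strongly commuting pair $f|_{J_j},g|_{J_j}$ and gives $\Ent((g\circ f^{-1})|_{J_j})=\max\{\Ent(f|_{J_j}),\Ent(g|_{J_j})\}$. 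On a type-(b) piece where, say, $f|_{J_j}$ is monotone, one checks that $g|_{J_j}$ and the homeomorphism $(f|_{J_j})^{-1}$ commute (inherited from $f\circ g=g\circ f$), so Proposition~\ref{prop:composition} yields $\Ent((g\circ f^{-1})|_{J_j})=\Ent(g|_{J_j})=\max\{\Ent(f|_{J_j}),\Ent(g|_{J_j})\}$, the last equality using $\Ent(f|_{J_j})=0$; the symmetric case where $g|_{J_j}$ is monotone follows by the same argument together with $\Ent(F)=\Ent(F^{-1})$. Taking the maximum over $j$ and using $\Ent(f)=\max_j\Ent(f|_{J_j})$ and its analogue for $g$ completes the proof.

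The main obstacle is the upper bound: Theorem~\ref{thm:strcom} as stated yields only an $f^2,g^2$-invariant decomposition, so one must either cite the stronger $f,g$-invariant decomposition from \cite{AM-stronglycomm} or derive it afresh. A secondary point requiring care is that, on a monotone piece, the restriction must be genuinely a homeomorphism of the piece onto itself (not merely into it), which is exactly what the full $f,g$-invariance is used to guarantee so that Proposition~\ref{prop:composition} can be applied.
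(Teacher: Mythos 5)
Your lower bound is correct and is in fact more elementary than the paper's own route: the relation identity $(g\circ f^{-1})^2=g^\circ f^{-1}\circ g\circ f^{-1}=g^2\circ f^{-2}$ (valid by strong commutativity), Proposition~\ref{prop:entIter} with $n=2$, and Corollary~\ref{cor:seconds} indeed give $\Ent(g\circ f^{-1})\geq\max\{\Ent(f),\Ent(g)\}$ with no mention of inverse limits, whereas the paper extracts this direction from the orbit-lifting construction inside Proposition~\ref{prop:entdiag}.

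The upper bound, however, contains a genuine gap, and it is exactly the point you flagged as the main obstacle. Your argument requires a partition of $I$ into subintervals invariant under $f$ and $g$ \emph{themselves}, with the dichotomy (both open)/(one monotone) holding for $f|_{J_j}$ and $g|_{J_j}$. The only decomposition this paper makes available, Theorem~\ref{thm:strcom} (which is what is actually quoted from \cite{AM-stronglycomm}), gives invariance under $f^2$ and $g^2$ only, and its dichotomy concerns $f^2$ and $g^2$, not $f$ and $g$; the informal sentence in the introduction is not a citable substitute, and the fact that the authors of \cite{AM-stronglycomm} themselves do not argue the way you propose strongly suggests the stronger statement is not at their disposal (commuting maps can permute the pieces of a candidate partition, which is precisely why only second iterates are controlled). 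Nor is there a cheap repair by squaring: Corollary~\ref{cor:seconds} gives $\Ent(g^2\circ f^{-2})=2\max\{\Ent(f),\Ent(g)\}$, but for set-valued maps the inequality $\Ent(F)\leq\frac{1}{2}\Ent(F^2)$ is false in general (see the paper's example $F=T_2^{-1}\circ T_2$, where $F^2=F$ and $\Ent(F)\geq\log 2$), and Proposition~\ref{prop:entIter} only yields the direction you already used for the lower bound. This is the obstruction the paper's proof is built to circumvent: by Proposition~\ref{prop:entdiag}, $\Ent(g\circ f^{-1})=\Ent(\Psi)$ for the \emph{single-valued} diagonal map $\Psi$ on $\underleftarrow{\lim}(I,f)$, for which the iterate formula $\Ent(\Psi^2)=2\Ent(\Psi)$ is legitimate; then $H((x_0,x_1,x_2,\ldots))=(x_0,x_2,x_4,\ldots)$ conjugates $\Psi^2$ to the diagonal map $\Phi$ on $\underleftarrow{\lim}(I,f^2)$, whose entropy equals $\Ent(g^2\circ f^{-2})$ by Proposition~\ref{prop:entdiag} again, and Corollary~\ref{cor:seconds} finishes. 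So unless you supply a proof of the $f,g$-invariant refinement of Theorem~\ref{thm:strcom}, the inverse-limit conjugacy step is not optional; it is the crux of the upper bound.
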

\begin{proof}
	Let $\Psi\colon\underleftarrow{\lim}(I,f)\to\underleftarrow{\lim}(I,f)$ be the diagonal map given by $\Psi((x_0,x_1,x_2,\ldots))=(g(x_1),g(x_2),\ldots)$, for every $(x_0,x_1,\ldots)\in\underleftarrow{\lim}(I,f)$. Since $g\circ f^{-1}=f^{-1}\circ g$, Proposition~\ref{prop:entdiag} implies that $\Ent(\Psi)=\Ent(g\circ f^{-1})$. 
	
	Let $\Phi\colon\underleftarrow{\lim}(I,f^2)\to\underleftarrow{\lim}(I,f^2)$ be the diagonal map given by $\Phi((y_0,y_1,y_2,\ldots))=(g^2(y_1),g^2(y_2),\ldots)$,  for every $(y_0,y_1,\ldots)\in\underleftarrow{\lim}(I,f^2)$. Then since also $g^2\circ f^{-2}=f^{-2}\circ g^2$, we have $\Ent(\Phi)=\Ent(g^2\circ f^{-2})$, and consequently by Corollary~\ref{cor:seconds}, $\Ent(\Phi)=\max\{\Ent(f^2),\Ent(g^2)\}$.
	
	Let $H\colon\underleftarrow{\lim}(I,f)\to\underleftarrow{\lim}(I,f^2)$ be a homeomorphism given by $H((x_0,x_1,x_2,\ldots))=(x_0,x_2,x_4,\ldots).$
	Note that $\Psi^2((x_0,x_1,x_2,\ldots))=(g^2(x_2),g^2(x_3),\ldots)$, so it follows that $H\circ\Psi^2((x_0,x_1,x_2,\ldots))=(g^2(x_2),g^2(x_4)\ldots)=\Phi(x_0,x_2,x_4,\ldots)=\Phi(H((x_0,x_1,x_2,\ldots))$, so $H\circ\Psi^2=\Phi\circ H$. In particular $\Phi$ and $\Psi^2$ are conjugated, and it follows that $\Ent(\Psi^2)=\Ent(\Phi)$.
	
	Now we have $2\Ent(g\circ f^{-1})=2\Ent(\Psi)=\Ent(\Psi^2)=\Ent(\Phi)=\Ent(g^2\circ f^{-2})=\max\{\Ent(g^2),\Ent(f^2)\}=2\max\{\Ent(g),\Ent(f)\}$, and thus it follows that $\Ent(g\circ f^{-1})=\max\{\Ent(g),\Ent(f)\}$.
\end{proof}

Recall that for set-valued functions $F$ in general $\Ent(F^n)\neq n\Ent(F)$. However, we have the following result:

\begin{corollary}
	If $f,g\colon I\to I$ are piecewise monotone maps such that $f\circ g^{-1}=g^{-1}\circ f$, then $\Ent((f\circ g^{-1})^n)=n\Ent(f\circ g^{-1})$.
\end{corollary}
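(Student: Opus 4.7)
The plan is to reduce the claim to Corollary~\ref{cor:main} applied to the pair $(f^n,g^n)$. The key algebraic observation is that strong commutation passes to iterates, so for every $n\in\N$ one has the (set-valued) identity
\[
(f\circ g^{-1})^n = f^n\circ g^{-n} = g^{-n}\circ f^n.
\]
First I would prove this by induction on $n$. The relation $f\circ g^{-1}=g^{-1}\circ f$ lets one slide a single $g^{-1}$ past a single $f$; iterating this sliding (and the dual relation obtained by taking inverses of set-valued relations) gives $g\circ f^{-k}=f^{-k}\circ g$ and then $g^j\circ f^{-k}=f^{-k}\circ g^j$ for every $j,k$. Plugging this into $(f\circ g^{-1})^n$ collapses all the alternating factors and yields both $f^n\circ g^{-n}$ and $g^{-n}\circ f^n$.

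Next I would record that $f^n$ and $g^n$ are again piecewise monotone (a composition of piecewise monotone interval maps is piecewise monotone), and that the pair $(f^n,g^n)$ satisfies the hypothesis of Corollary~\ref{cor:main}, namely $g^n\circ f^{-n}=f^{-n}\circ g^n$, which is precisely the identity just established. Applying Corollary~\ref{cor:main} to $(f^n,g^n)$ therefore gives
\[
\Ent\bigl((f\circ g^{-1})^n\bigr)=\Ent(f^n\circ g^{-n})=\max\{\Ent(f^n),\Ent(g^n)\}.
\]

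Finally I would invoke the standard single-valued identity $\Ent(h^n)=n\,\Ent(h)$ for continuous $h\colon I\to I$, which turns the right-hand side into $n\max\{\Ent(f),\Ent(g)\}$. A second application of Corollary~\ref{cor:main}, now to $(f,g)$ themselves, rewrites this as $n\,\Ent(f\circ g^{-1})$, completing the proof.

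The only non-bookkeeping step is the algebraic lemma $(f\circ g^{-1})^n=f^n\circ g^{-n}=g^{-n}\circ f^n$, and even this is routine once one is careful that all equalities are equalities of graphs of set-valued maps; the rest is a direct combination of Corollary~\ref{cor:main} with the single-valued identity $\Ent(h^n)=n\Ent(h)$. I do not anticipate a genuine obstacle: the corollary bypasses the general failure of $\Ent(F^n)=n\Ent(F)$ for set-valued $F$ precisely because the entropy of $F=f\circ g^{-1}$ has been identified with the entropy of a single-valued map, where the identity holds on the nose.
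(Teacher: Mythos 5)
Your proof is correct and is essentially the argument the paper intends: the corollary is stated there without proof immediately after Corollary~\ref{cor:main}, and the evident route is exactly yours --- the iterate identity $(f\circ g^{-1})^n=f^n\circ g^{-n}=g^{-n}\circ f^n$ (the same manipulation the paper performs inside the proof of Proposition~\ref{prop:lowerbound}), followed by Corollary~\ref{cor:main} applied to the piecewise monotone pair $(f^n,g^n)$ and the single-valued identity $\Ent(h^n)=n\Ent(h)$, then Corollary~\ref{cor:main} once more for $(f,g)$. No gaps.
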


\appendix

\section{An example}  

Let $a,b,c$ be such that $0\leq a \leq b\leq c$, $b=\log(n) \mbox{ or } \infty$ for some $n\in \N$ and $c=\log(m) \mbox{ or } \infty$ for some $m\in \N$.
 
We will construct maps $f_i,g_i:I\to I$ such that 
\begin{enumerate}
	\item $f_i\circ g_{i+1}=g_i\circ f_{i+1}$
	\item $\mbox{Ent}(\Psi)=a$
	\item $\liminf_{i\to\infty}\mbox{Ent}(g_i\circ f_{i}^{-1})=b$
	\item $\limsup_{i\to\infty}\mbox{Ent}(g_i\circ f_{i}^{-1})=c$
\end{enumerate}

Where ${X}=\underleftarrow{\lim}(I, f_i)$ and $\Psi\colon{X}\to {X}$ is a diagonal map defined by $\Psi((x_i )_{i=0}^{\infty})=( g_{i}(x_i) )_{i=1}^{\infty}$. That shows that if the assumption (ii) from Proposition~\ref{prop:entdiag} is not satisfied, then 
\begin{enumerate}
	\item[(i)] $\lim_{i\to\infty}\Ent(g_i\circ f_i^{-1})$ does not have to exist, and
	\item[(ii)] $\Ent(\Psi)$ can be strictly less than $\liminf_{i\to\infty}\Ent(g_i\circ f_i^{-1})$.
\end{enumerate}

Let $p$ be an odd natural number and $T_p$ be the symmetric tent map. Define $S_p$ to be a shifted 3-fold defined in the following way and dependent on $p$:

$$S_p(x)=\begin{cases}
\frac{2px}{p-1}, & 0\leq x\leq\frac{p-1}{2p}, \\
2-\frac{2px}{p-1}, &\frac{p-1}{2p}< x\leq\frac{p-1}{p},\\
px-p+1, &\frac{p-1}{p}<x\leq 1.
\end{cases}$$

For $p$ odd and $i\in \{1,...,p-2\}$ define $G_p:I\to I$

$$G_p(x)=\begin{cases}
\frac{(p-1)x}{2}, &0\leq x\leq \frac{2}{p} \\
x+\frac{p-1}{p}-\frac{i+1}{p}, &\frac{i+1}{p}< x\leq\frac{i+2}{p}, \text{$i$ odd}\\
1-x+\frac{i+2}{p}, &\frac{i+2}{p}< x\leq\frac{i+3}{p}, \text {$i$ even}.
\end{cases}$$

See Figure~\ref{fig:FpGp}.
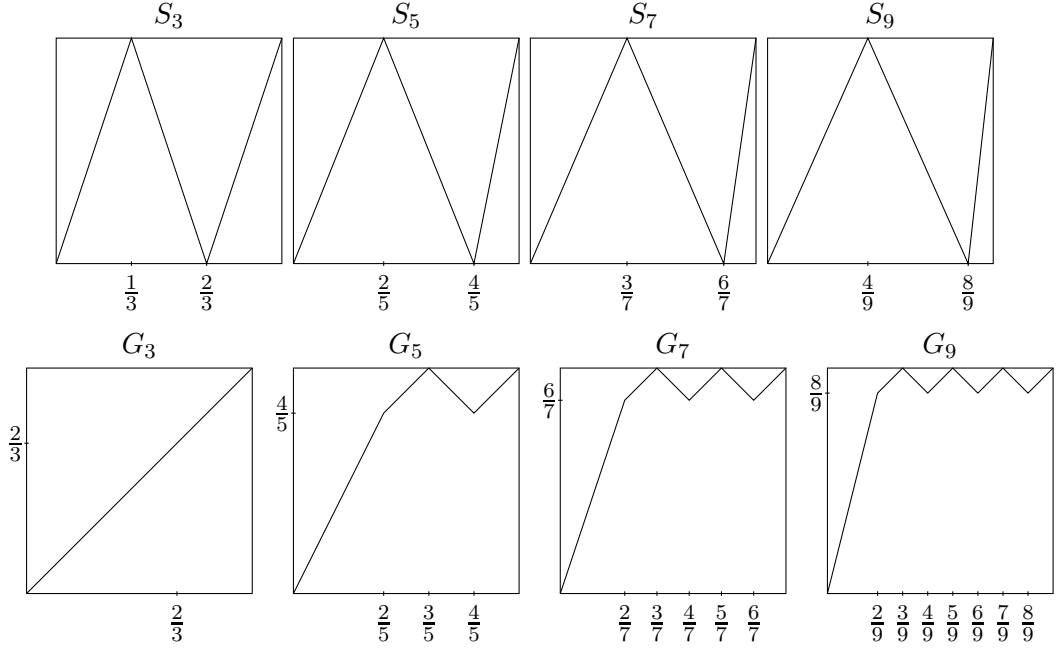
\begin{figure}[!ht]
	\centering
	\begin{tikzpicture}[scale=3]
	\draw (0,0)--(0,1)--(1,1)--(1,0)--(0,0);
	\draw (0,0)--(1/3,1)--(2/3,0)--(1,1);
	\node at (0.5,1.1) {\small $S_3$};
	\draw (1/3,-0.01)--(1/3,0.01);
	\node at (1/3,-0.13) {\small $\frac{1}{3}$};
	\draw (2/3,-0.01)--(2/3,0.01);
	\node at (2/3,-0.13) {\small $\frac{2}{3}$};
	\end{tikzpicture}
	\begin{tikzpicture}[scale=3]
	\draw (0,0)--(0,1)--(1,1)--(1,0)--(0,0);
	\draw (0,0)--(2/5,1)--(4/5,0)--(1,1);
	\node at (0.5,1.1) {\small $S_5$};
	\draw (2/5,-0.01)--(2/5,0.01);
	\node at (2/5,-0.13) {\small $\frac{2}{5}$};
	\draw (4/5,-0.01)--(4/5,0.01);
	\node at (4/5,-0.13) {\small $\frac{4}{5}$};
	\end{tikzpicture}
	\begin{tikzpicture}[scale=3]
	\draw (0,0)--(0,1)--(1,1)--(1,0)--(0,0);
	\draw (0,0)--(3/7,1)--(6/7,0)--(1,1);
	\node at (0.5,1.1) {\small $S_7$};
	\draw (3/7,-0.01)--(3/7,0.01);
	\node at (3/7,-0.13) {\small $\frac{3}{7}$};
	\draw (6/7,-0.01)--(6/7,0.01);
	\node at (6/7,-0.13) {\small $\frac{6}{7}$};
	\end{tikzpicture}
	\begin{tikzpicture}[scale=3]
	\draw (0,0)--(0,1)--(1,1)--(1,0)--(0,0);
	\draw (0,0)--(4/9,1)--(8/9,0)--(1,1);
	\node at (0.5,1.1) {\small $S_9$};
	\draw (4/9,-0.01)--(4/9,0.01);
	\node at (4/9,-0.13) {\small $\frac{4}{9}$};
	\draw (8/9,-0.01)--(8/9,0.01);
	\node at (8/9,-0.13) {\small $\frac{8}{9}$};
	\end{tikzpicture}
	\begin{tikzpicture}[scale=3]
	\draw (0,0)--(0,1)--(1,1)--(1,0)--(0,0);
	\draw (0,0)--(1,1);
	\node at (0.5,1.1) {\small $G_3$};
	\draw (2/3,-0.01)--(2/3,0.01);
	\node at (2/3,-0.13) {\small $\frac{2}{3}$};
	\draw (-0.01,2/3)--(0.01,2/3);
	\node at (-0.05,2/3) {\small $\frac{2}{3}$};
	\end{tikzpicture}
	\begin{tikzpicture}[scale=3]
	\draw (0,0)--(0,1)--(1,1)--(1,0)--(0,0);
	\draw (0,0)--(2/5,4/5)--(3/5,1)--(4/5,4/5)--(1,1);
	\node at (0.5,1.1) {\small $G_5$};
	\draw (2/5,-0.01)--(2/5,0.01);
	\node at (2/5,-0.13) {\small $\frac{2}{5}$};
	\draw (3/5,-0.01)--(3/5,0.01);
	\node at (3/5,-0.13) {\small $\frac{3}{5}$};
	\draw (4/5,-0.01)--(4/5,0.01);
	\node at (4/5,-0.13) {\small $\frac{4}{5}$};
	\draw (-0.01,4/5)--(0.01,4/5);
	\node at (-0.05,4/5) {\small $\frac{4}{5}$};
	\end{tikzpicture}
	\begin{tikzpicture}[scale=3]
	\draw (0,0)--(0,1)--(1,1)--(1,0)--(0,0);
	\draw (0,0)--(2/7,6/7)--(3/7,1)--(4/7,6/7)--(5/7,1)--(6/7,6/7)--(1,1);
	\node at (0.5,1.1) {\small $G_7$};
	\draw (2/7,-0.01)--(2/7,0.01);
	\node at (2/7,-0.13) {\small $\frac{2}{7}$};
	\draw (3/7,-0.01)--(3/7,0.01);
	\node at (3/7,-0.13) {\small $\frac{3}{7}$};
	\draw (4/7,-0.01)--(4/7,0.01);
	\node at (4/7,-0.13) {\small $\frac{4}{7}$};
	\draw (5/7,-0.01)--(5/7,0.01);
	\node at (5/7,-0.13) {\small $\frac{5}{7}$};
	\draw (6/7,-0.01)--(6/7,0.01);
	\node at (6/7,-0.13) {\small $\frac{6}{7}$};
	\draw (-0.01,6/7)--(0.01,6/7);
	\node at (-0.05,6/7) {\small $\frac{6}{7}$};
	\end{tikzpicture}
	\begin{tikzpicture}[scale=3]
	\draw (0,0)--(0,1)--(1,1)--(1,0)--(0,0);
	\draw (0,0)--(2/9,8/9)--(3/9,1)--(4/9,8/9)--(5/9,1)--(6/9,8/9)--(7/9,1)--(8/9,8/9)--(1,1);
	\node at (0.5,1.1) {\small $G_9$};
	\draw (2/9,-0.01)--(2/9,0.01);
	\node at (2/9,-0.13) {\small $\frac{2}{9}$};
	\draw (3/9,-0.01)--(3/9,0.01);
	\node at (3/9,-0.13) {\small $\frac{3}{9}$};
	\draw (4/9,-0.01)--(4/9,0.01);
	\node at (4/9,-0.13) {\small $\frac{4}{9}$};
	\draw (5/9,-0.01)--(5/9,0.01);
	\node at (5/9,-0.13) {\small $\frac{5}{9}$};
	\draw (6/9,-0.01)--(6/9,0.01);
	\node at (6/9,-0.13) {\small $\frac{6}{9}$};
	\draw (7/9,-0.01)--(7/9,0.01);
	\node at (7/9,-0.13) {\small $\frac{7}{9}$};
	\draw (8/9,-0.01)--(8/9,0.01);
	\node at (8/9,-0.13) {\small $\frac{8}{9}$};
	\draw (-0.01,8/9)--(0.01,8/9);
	\node at (-0.05,8/9) {\small $\frac{8}{9}$};
	\end{tikzpicture}
	\caption{Graphs of maps $S_p$ and $G_p$ for $p=3,5,7,9$.}
	\label{fig:FpGp}
\end{figure}

Notice that $S_p\circ G_p=T_p=id\circ T_p$. Also, $T_p\circ S_p^{-1}=T_{\frac{p-1}{2}}\cup D$, where $D=\{(x,x): x\in I\}$ is the {\it diagonal}, see Figure~\ref{fig:TpGpinv}. So $S_p\circ T_p^{-1}=T_{\frac{p-1}{2}}^{-1}\cup D$.
Hence, 
\begin{equation}\label{eq:ent}
\mbox{Ent}(S_p\circ T_p^{-1})=\log(\frac{p-1}{2}).
\end{equation}
Furthermore, $G_p(x)\geq x$ for all $\in I$. Hence $\mbox{Ent}(G_p)=0$. Thus, $\mbox{Ent}(G_p^{-1})=0$.  

\begin{figure}[!ht]
	\centering
	\begin{tikzpicture}[scale=3]
	\draw (0,0)--(0,1)--(1,1)--(1,0)--(0,0);
	\node at (0.5,1.1) {\small $T_5\circ S_5^{-1}$};
	\draw (1,1)--(0,0)--(1/2,1)--(1,0);
	\draw (1/2,-0.01)--(1/2,0.01);
	\node at (1/2,-0.13) {\small $\frac{1}{2}$};
	\end{tikzpicture}
	\begin{tikzpicture}[scale=3]
	\draw (0,0)--(0,1)--(1,1)--(1,0)--(0,0);
	\node at (0.5,1.1) {\small $T_7\circ S_7^{-1}$};
	\draw (0,0)--(1/3,1)--(2/3,0)--(1,1)--(0,0);
	\draw (1/3,-0.01)--(1/3,0.01);
	\node at (1/3,-0.13) {\small $\frac{1}{3}$};
	\draw (2/3,-0.01)--(2/3,0.01);
	\node at (2/3,-0.13) {\small $\frac{2}{3}$};
	\end{tikzpicture}
	\begin{tikzpicture}[scale=3]
	\draw (0,0)--(0,1)--(1,1)--(1,0)--(0,0);
	\node at (0.5,1.1) {\small $T_9\circ S_9^{-1}$};
	\draw (1,1)--(0,0)--(1/4,1)--(2/4,0)--(3/4,1)--(1,0);
	\draw (1/4,-0.01)--(1/4,0.01);
	\node at (1/4,-0.13) {\small $\frac{1}{4}$};
	\draw (2/4,-0.01)--(2/4,0.01);
	\node at (2/4,-0.13) {\small $\frac{1}{2}$};
	\draw (3/4,-0.01)--(3/4,0.01);
	\node at (3/4,-0.13) {\small $\frac{3}{4}$};
	\end{tikzpicture}
	\caption{Graphs of $T_p\circ S_p^{-1}$ for $p=5,7,9$.}
	\label{fig:TpGpinv}
\end{figure}
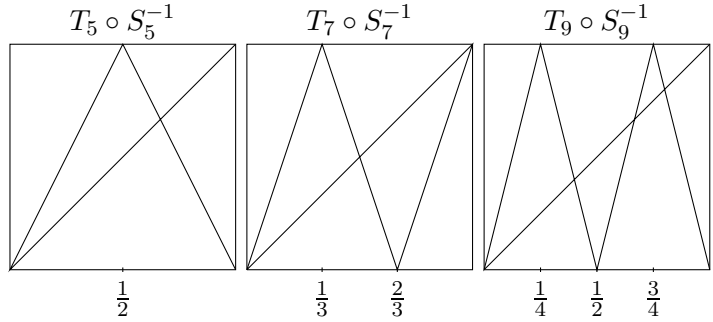

Let $R:I\to I$ be defined by 
$$R(x)=\begin{cases}
\frac{3}{2}x, & x\in [0,\frac{1}{3}]  \\
\frac{1}{2} & x\in (\frac{1}{3},\frac{2}{3})\\
\frac{3}{2}x-\frac{1}{2}, & x\in [\frac{2}{3},1].
\end{cases}$$

Let $L_{a,b}:I\to[a,b]$ be a homeomorphism defined by $L_{a,b}(x)=(b-a)x+a$. 

Next if $w:I\to I$, then define $\widetilde{w}:I\to I$ by

$$\widetilde{w}(x)=\begin{cases}
x, & x\in [0,\frac{1}{3})\cup(\frac{2}{3},1]\\
L_{\frac{1}{3},\frac{2}{3}}\circ w\circ L^{-1}_{\frac{1}{3},\frac{2}{3}}(x), & x\in [\frac{2}{3},1].
\end{cases}$$

See Figure~\ref{fig:Rwtilde}.
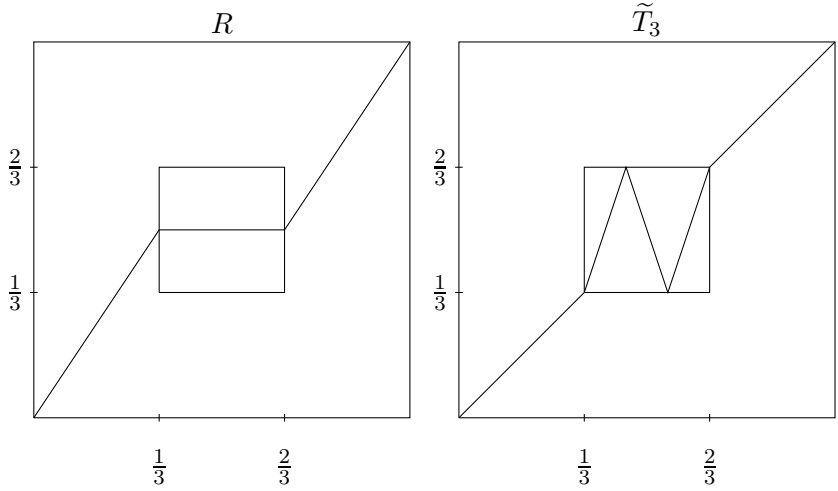
\begin{figure}[!ht]
	\centering
	\begin{tikzpicture}[scale=5]
	\draw (0,0)--(0,1)--(1,1)--(1,0)--(0,0);
	\draw[thin] (1/3,1/3)--(1/3,2/3)--(2/3,2/3)--(2/3,1/3)--(1/3,1/3);
	\node at (0.5,1.05) {\small $R$};
	\draw (0,0)--(1/3,1/2)--(2/3,1/2)--(1,1);
	\draw (1/3,-0.01)--(1/3,0.01);
	\node at (1/3,-0.13) {\small $\frac{1}{3}$};
	\draw (2/3,-0.01)--(2/3,0.01);
	\node at (2/3,-0.13) {\small $\frac{2}{3}$};
	\draw (-0.01,1/3)--(0.01,1/3);
	\node at (-0.05,1/3) {\small $\frac{1}{3}$};
	\draw (-0.01,2/3)--(0.01,2/3);
	\node at (-0.05,2/3) {\small $\frac{2}{3}$};
	\end{tikzpicture}
	\begin{tikzpicture}[scale=5]
	\draw (0,0)--(0,1)--(1,1)--(1,0)--(0,0);
	\draw[thin] (1/3,1/3)--(1/3,2/3)--(2/3,2/3)--(2/3,1/3)--(1/3,1/3);
	\node at (0.5,1.06) {\small $\widetilde T_3$};
	\draw (0,0)--(1/3,1/3)--(1/3+1/9,2/3)--(1/3+2/9,1/3)--(2/3,2/3)--(1,1);
	\draw (1/3,-0.01)--(1/3,0.01);
	\node at (1/3,-0.13) {\small $\frac{1}{3}$};
	\draw (2/3,-0.01)--(2/3,0.01);
	\node at (2/3,-0.13) {\small $\frac{2}{3}$};
	\draw (-0.01,1/3)--(0.01,1/3);
	\node at (-0.05,1/3) {\small $\frac{1}{3}$};
	\draw (-0.01,2/3)--(0.01,2/3);
	\node at (-0.05,2/3) {\small $\frac{2}{3}$};
	\end{tikzpicture}
	\caption{Maps $R$ and $\tilde T_3$.}
	\label{fig:Rwtilde}
\end{figure}

Also, for $i\in\N$ define ${\mathcal L}_i(f):[\frac{2^{i-1}-1}{2^{i-1}},\frac{2^{i}-1}{2^i}]\to [\frac{2^{i-1}-1}{2^{i-1}},\frac{2^{i}-1}{2^i}] $ by  \[{\mathcal L}_i(f)(x)=L_{\frac{2^{i-1}-1}{2^{i-1}},\frac{2^{i}-1}{2^i}}\circ f\circ L_{\frac{2^{i-1}-1}{2^{i-1}},\frac{2^{i}-1}{2^i}}^{-1}(x).\]

Note that if $f\circ g=g\circ f$ then  $\tilde f\circ \tilde g=\tilde g\circ \tilde f$ and ${\mathcal L}_i(f)\circ {\mathcal L}_i(g)={\mathcal L}_i(g)\circ {\mathcal L}_i(f)$. Also, $\mbox{Ent}(\tilde f)=\mbox{Ent}(f)=\mbox{Ent}({\mathcal L}_i(f))$ and $\mbox{Ent}(\tilde g\circ \tilde f^{-1})=\mbox{Ent}(g\circ f^{-1})=\mbox{Ent}({\mathcal L}_i(g)\circ ({\mathcal L}_i(f))^{-1})$. Furthermore if $f:I \to I$ is any function then,  $R\circ \tilde{f}=R$.

\begin{theorem}[See maps $g_{\eps}$ in the proof of Theorem~15 in \cite{Mou-entropy}]\label{thm:anyent} For every $a\in [0,\infty]$ there exists a map $W_a:I\to I$ such that $W_a(0)=0$, $W_a(1)=1$, and $\Ent(W_a)=a$. 
\end{theorem}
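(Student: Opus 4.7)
\textbf{Proof Proposal for Theorem \ref{thm:anyent}.}

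The plan is to handle three cases: $a=0$, $a\in(0,\infty)$, and $a=\infty$, in each case producing an explicit continuous map $W_a\colon I\to I$ with $W_a(0)=0$, $W_a(1)=1$, and the prescribed entropy. The case $a=0$ is trivial: take $W_0=\mathrm{id}$; since any homeomorphism of $I$ has zero topological entropy, and the endpoint conditions are automatic.

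For $a\in(0,\infty)$, set $\lambda=e^a\in(1,\infty)$. The plan is to realize entropy $\log\lambda$ by a piecewise linear ``zig-zag'' map with constant absolute slope $\lambda$. Classical kneading theory (Milnor--Thurston) or a direct Misiurewicz--Szlenk calculation shows that for $\lambda\in(1,2]$ one can take a unimodal tent-like map with the appropriate slope, whose lap number grows like $\lambda^n$, and for $\lambda>2$ one uses a piecewise linear map with enough turning points so that every branch has slope $\pm\lambda$; in either case the entropy equals $\lim_n\frac{1}{n}\log\ell(f^n)=\log\lambda=a$. To arrange the endpoint condition $W_a(0)=0$, $W_a(1)=1$, I would adjoin two small monotone ``collars'' on $[0,\delta]$ and $[1-\delta,1]$ that carry $0\mapsto 0$ and $1\mapsto 1$ and funnel the dynamics into a core interval $[\delta,1-\delta]$ on which the constructed piecewise linear map lives (rescaled via $L_{\delta,1-\delta}$). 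Since the collars are monotone and invariant in one direction, they contribute no entropy, so the topological entropy is still $a$.

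For $a=\infty$, I would use the rescaling operator $\mathcal{L}_i$ introduced in the appendix: on each subinterval $J_i=[1-2^{-(i-1)},1-2^{-i}]$, place a rescaled copy $\mathcal{L}_i(T_{2^i})$ of the tent map $T_{2^i}$ (for $i$ odd, so that $T_{2^i}$ sends $J_i$ onto itself after rescaling with both endpoints fixed); set $W_\infty(1)=1$. Continuity at $1$ follows because the domains $J_i$ shrink to $\{1\}$ and each rescaled map takes values in $J_i$. Since $W_\infty|_{J_i}$ is semi-conjugate to $T_{2^i}$, we have $\Ent(W_\infty)\geq \Ent(W_\infty|_{J_i})=\log(2^i)=i\log 2$ for every $i$, hence $\Ent(W_\infty)=\infty$.

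The main obstacle is the $a\in(0,\infty)$ case: verifying that every real number in $(0,\infty)$ is realizable as the topological entropy of a piecewise linear interval map with fixed endpoints. The cleanest route is via skew tent maps $T_{\alpha,\beta}(x)=x/\alpha$ on $[0,\alpha]$ and $T_{\alpha,\beta}(x)=(1-x)/(1-\alpha)$ on $[\alpha,1]$, whose topological entropies depend continuously and monotonically on the parameters and exhaust $[0,\log 2]$; iteration (or increasing the number of laps) then exhausts $(0,\infty)$. The endpoint fix described above preserves this entropy. Since the paper cites \cite{Mou-entropy} for precisely such a family of maps $g_\eps$, the appeal to that construction completes the proof.
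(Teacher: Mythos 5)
Your strategy for $a=0$ and $a\in(0,\infty)$ is sound, and in fact it is essentially what the paper does: the paper's ``proof'' of this theorem is a citation to \cite{Mou-entropy}, and the remark following the theorem constructs, for $a<\infty$, a piecewise linear map of constant absolute slope $e^a$ on $[0,1/2]$ fixing $0$, then extends it by a line segment over $[1/2,1]$ to reach $(1,1)$ --- the same ``constant-slope core plus monotone collar'' idea you propose, with Misiurewicz--Szlenk giving $\Ent=\log(\mbox{slope})$. Two caveats on your write-up of this case. First, your one-line claim that the monotone collars ``contribute no entropy'' is precisely the step that needs an argument (via the non-wandering set, or by checking that the lap numbers of $W_a^n$ still grow like $e^{an}$); the paper outsources exactly this step to Theorem~12 of \cite{Mou-entropy}, and you should either cite such a result or sketch the lap-count. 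Second, the skew tent family you display, $T_{\alpha}(x)=x/\alpha$ on $[0,\alpha]$ and $(1-x)/(1-\alpha)$ on $[\alpha,1]$, consists of \emph{full} skew tent maps, which all have entropy $\log 2$; it does not exhaust $[0,\log 2]$. This is harmless only because the constant-slope construction already covers all of $(0,\infty)$, so you should simply delete that remark.

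The $a=\infty$ case contains a concrete error. The tent map $T_{2^i}$ has an even number of laps, so $T_{2^i}(1)=0$, never $1$ --- and this has nothing to do with the parity of the index $i$. Consequently the rescaled copy on $J_i=[1-2^{-(i-1)},1-2^{-i}]$ sends the right endpoint of $J_i$ to its \emph{left} endpoint, so consecutive blocks fail to match at every junction $1-2^{-i}$ and your $W_\infty$ is not continuous (nor is each $J_i$ genuinely invariant with both endpoints fixed, as your argument requires). The fix is immediate: use tent maps with an \emph{odd} number of laps and unbounded entropy, e.g.\ $T_{2^i+1}$, which fix both $0$ and $1$ (these are exactly the maps $T_{2n_i+1}$ used in the paper's appendix construction). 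With that substitution your argument goes through: each block is invariant, $W_\infty|_{J_i}$ is conjugate (not merely semi-conjugate, since the rescaling is a homeomorphism) to $T_{2^i+1}$, hence $\Ent(W_\infty)\geq\sup_i\log(2^i+1)=\infty$, and continuity at $1$ holds because the blocks shrink to $\{1\}$. Note also that the paper's remark observes $W_\infty$ must have infinitely many laps, so some such infinite construction is unavoidable here.
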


\begin{remark}
	Maps $g_{\eps}\colon I\to I$ from \cite{Mou-entropy} have infinitely many pieces of monotonicity. If $a<\infty$, it is also possible to construct $W_a$ as in Theorem~\ref{thm:anyent} which is additionally piecewise monotone. We first recall that if $f\colon I\to I$ is piecewise linear, and such that the absolute value of the slope of every piece equals $s$ for some $s\geq 1$, then $\Ent(f)=\log(s)$. This follows from \cite{MisSl}, once we note that $Var(f^n)=\sup\{\sum_{i=1}^k|f^n(x_{i-1})-f^n(x_{i})|, 0=x_0<x_1<\ldots<x_n=1\}=s^n$, for every $n\in\N$. Given $0\leq a<\infty$, we construct a piecewise linear $f_a\colon [0,1/2]\to[0,1/2]$, such that $f_a(0)=0$, and the absolute value of the slopes of pieces of $f_a$ equal $e^a$. We then define
	$$W_a(x)=\begin{cases}
	f_a(x), & x\in[0,1/2],\\
	2(x-1)(1-f_a(1/2))+1, & x\in[1/2,1],
	\end{cases}$$
	and use Theorem~12 in \cite{Mou-entropy} to conclude that $\Ent(W_a)=\Ent(f_a)=a$. Since every piecewise monotone map has finite entropy, note that $W_{\infty}$ has to have infinitely many monotonicity pieces.
\end{remark}

Let $\{n_i\}_{i=1}^{\infty}$ be a sequence of natural numbers such that $\liminf_{i\rightarrow\infty}\log(n_i)=b$ and $\limsup_{i\rightarrow\infty}\log(n_i)=c$ and choose $W_a$ from the above theorem.

Now define (see Figure~\ref{fig:gi} and Figure~\ref{fig:fi}):

\noindent\begin{minipage}{.5\linewidth}
	$$g_1(x)=\begin{cases}
	{\mathcal L}_1(W_a)(x), & x\in [0,\frac{1}{2}]\\
	{\mathcal L}_2(\widetilde{T}_{2n_1+1})(x), & x\in [\frac{1}{2},\frac{3}{4}]\\
	{\mathcal L}_3(R)(x), & x\in [\frac{3}{4},\frac{7}{8}]\\
	x, & \frac{7}{8}<x.
	\end{cases}$$
\end{minipage}
\begin{minipage}{.5\linewidth}
	$$f_1(x)=\begin{cases}
	x, & x\in [0,\frac{1}{2}]\\
	{\mathcal L}_2(\widetilde{S}_{2n_1+1})(x), & x\in [\frac{1}{2},\frac{3}{4}]\\
	{\mathcal L}_3(R)(x), & x\in [\frac{3}{4},\frac{7}{8}]\\
	x, & \frac{7}{8}<x.
	\end{cases},$$
\end{minipage}

and for $k>1$ define

	$$g_k(x)=\begin{cases}
	{\mathcal L}_1(W_a)(x), & x\in [0,\frac{1}{2}]\\
	{\mathcal L}_{i+1}(\widetilde{G}_{2n_i+1})(x), & x\in [\frac{2^{i}-1}{2^{i}},\frac{2^{i+1}-1}{2^{i+1}}] \mbox{ and } i\in\{1,...,k-1\}\\
	{\mathcal L}_{k+1}(\widetilde{T}_{2n_k+1})(x), & x\in [\frac{2^{k}-1}{2^{k}},\frac{2^{k+1}-1}{2^{k+1}}]\\
	{\mathcal L}_{k+2}(R)(x), & x\in [\frac{2^{k+1}-1}{2^{k+1}},\frac{2^{k+2}-1}{2^{k+2}}]\\
	x, & \frac{2^{k+2}-1}{2^{k+2}}<x.
	\end{cases}$$
	$$f_k(x)=\begin{cases}
	x, & x\in [0,\frac{2^{k}-1}{2^{k}}]\\
	{\mathcal L}_{k+1}(\widetilde{S}_{2n_k+1})(x), & x\in [\frac{2^{k}-1}{2^{k}},\frac{2^{k+1}-1}{2^{k+1}}]\\
	{\mathcal L}_{k+2}(R)(x), & x\in [\frac{2^{k+1}-1}{2^{k+1}},\frac{2^{k+2}-1}{2^{k+2}}]\\
	x, & \frac{2^{k+2}-1}{2^{k+2}}<x.
	\end{cases}$$

\begin{figure}[!ht]
	\centering
	\begin{tikzpicture}[scale=7.7]
	\draw (0,0)--(0,1)--(1,1)--(1,0)--(0,0);
	\draw[thin] (0,1/2)--(1,1/2);
	\draw[thin] (1/2,0)--(1/2,1);
	\draw[thin] (1/2,3/4)--(1,3/4);
	\draw[thin] (3/4,1/2)--(3/4,1);
	\draw[thin] (3/4,7/8)--(1,7/8);
	\draw[thin] (7/8,3/4)--(7/8,1);
	\node at (0.5,1.05) {\small $g_1$};
	\node at (1/4,1/4) {\small $W_a$};
	\node at (5/8,5/8) {\small $\widetilde T_{2n_1+1}$};
	\node at (13/16,13/16) {\small $R$};
	\draw (7/8,7/8)--(1,1);
	\end{tikzpicture}
	\begin{tikzpicture}[scale=7.7]
	\draw (0,0)--(0,1)--(1,1)--(1,0)--(0,0);
	\draw[thin] (0,1/2)--(1,1/2);
	\draw[thin] (1/2,0)--(1/2,1);
	\draw[thin] (1/2,3/4)--(1,3/4);
	\draw[thin] (3/4,1/2)--(3/4,1);
	\draw[thin] (3/4,7/8)--(1,7/8);
	\draw[thin] (7/8,3/4)--(7/8,1);
	\draw[thin] (7/8,15/16)--(1,15/16);
	\draw[thin] (15/16,7/8)--(15/16,1);
	\node at (0.5,1.05) {\small $g_2$};
	\node at (1/4,1/4) {\small $W_a$};
	\node at (5/8,5/8) {\small $\widetilde G_{2n_1+1}$};
	\node at (13/16,13/16) {\tiny $\widetilde T_{2n_2+1}$};
	\node at (29/32,29/32) {\small $R$};
	\draw (15/16,15/16)--(1,1);
	\end{tikzpicture}
	\caption{Depiction of graphs of $g_1$ and $g_2$.}
	\label{fig:gi}
\end{figure}
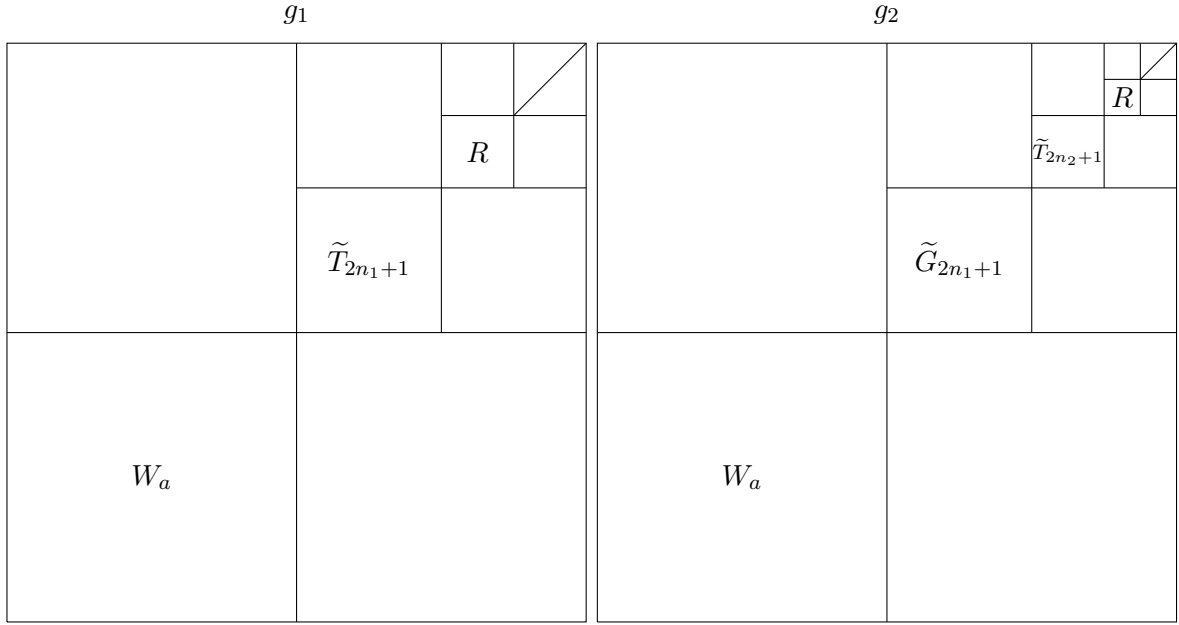

\begin{figure}[!ht]
	\centering
	\begin{tikzpicture}[scale=7.7]
	\draw (0,0)--(0,1)--(1,1)--(1,0)--(0,0);
	\draw[thin] (0,1/2)--(1,1/2);
	\draw[thin] (1/2,0)--(1/2,1);
	\draw[thin] (1/2,3/4)--(1,3/4);
	\draw[thin] (3/4,1/2)--(3/4,1);
	\draw[thin] (3/4,7/8)--(1,7/8);
	\draw[thin] (7/8,3/4)--(7/8,1);
	\node at (0.5,1.05) {\small $f_1$};
	\draw (0,0)--(1/2,1/2);
	\node at (5/8,5/8) {\small $\widetilde S_{2n_1+1}$};
	\node at (13/16,13/16) {\small $R$};
	\draw (7/8,7/8)--(1,1);
	\end{tikzpicture}
	\begin{tikzpicture}[scale=7.7]
	\draw (0,0)--(0,1)--(1,1)--(1,0)--(0,0);
	\draw[thin] (0,1/2)--(1,1/2);
	\draw[thin] (1/2,0)--(1/2,1);
	\draw[thin] (1/2,3/4)--(1,3/4);
	\draw[thin] (3/4,1/2)--(3/4,1);
	\draw[thin] (3/4,7/8)--(1,7/8);
	\draw[thin] (7/8,3/4)--(7/8,1);
	\draw[thin] (7/8,15/16)--(1,15/16);
	\draw[thin] (15/16,7/8)--(15/16,1);
	\node at (0.5,1.05) {\small $f_2$};
	\draw (0,0)--(3/4,3/4);
	\node at (13/16,13/16) {\tiny $\widetilde S_{2n_2+1}$};
	\node at (29/32,29/32) {\small $R$};
	\draw (15/16,15/16)--(1,1);
	\end{tikzpicture}
	\caption{Depiction of graphs of $f_1$ and $f_2$.}
	\label{fig:fi}
\end{figure}
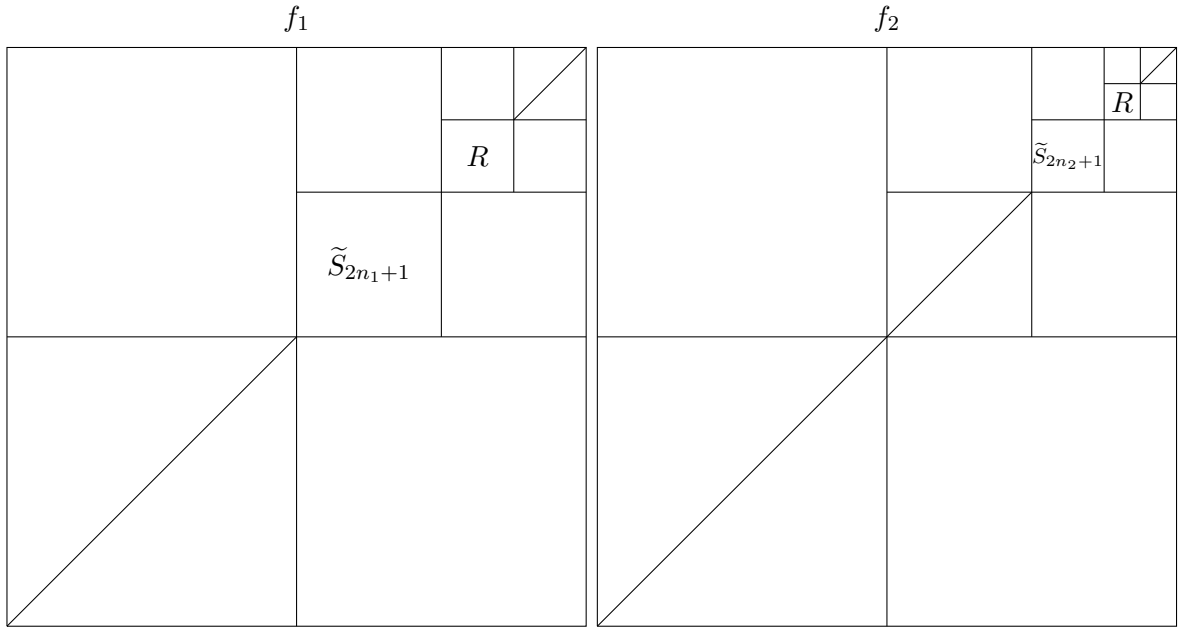

Then $f_k\circ g_{k+1}=g_k\circ f_{k+1}$ for every $k\in\N$ so we can define the diagonal map $\Psi\colon X\to  X$, $\Psi((x_i)_{i=0}^{\infty})=(g_i(x_i))_{i=1}^{\infty}$. Theorem~\ref{thm:liminf} implies that $\Ent(\Psi)\leq\liminf_{k\to\infty}\Ent(\psi_k)$, where $\psi_k=g_k\circ f_k^{-1}$ for every $k\in\N$.

Note the following:
\begin{enumerate}
	\item $g_k([\frac{2^{i}-1}{2^{i}},\frac{2^{i+1}-1}{2^{i+1}}])=[\frac{2^{i}-1}{2^{i}},\frac{2^{i+1}-1}{2^{i+1}}]=f_k([\frac{2^{i}-1}{2^{i}},\frac{2^{i+1}-1}{2^{i+1}}])$ and $g_k(1)=1=f_k(1)$  for all $k$ and $i$.

	\item By (1) it make sense to define ${X}_n=\underleftarrow{\lim}([\frac{2^{n-1}-1}{2^{n-1}},\frac{2^{n}-1}{2^{n}}], f_i)$ for $n\in {\mathbb N}$.
	
	\item Each ${X}_n$ is invariant under $\Psi$.
	
	\item ${X}=\bigcup_{n=1}^{\infty}{X}_n$ so $\mbox{Ent}(\Psi)=\sup_n \mbox{Ent}(\Psi|_{{X}_n})$.
	
	\item $\mbox{Ent}(\psi_i|_{[0,\frac{1}{2}]})=\mbox{Ent}(W_a)=a$ for all $i$, so $\mbox{Ent}(\Psi|_{{X}_1})=a$.
	
	\item For $n\geq 2$, $\mbox{Ent}(\psi_i|_{[\frac{2^{n-1}-1}{2^{n-1}},\frac{2^{n}-1}{2^{n}}]})=0$ for all $i>n$. So  $$\mbox{Ent}(\Psi|_{{X}_n})\leq \liminf_{i\rightarrow \infty}\mbox{Ent}(\psi_i|_{[\frac{2^{n-1}-1}{2^{n-1}},\frac{2^{n}-1}{2^{n}}]})=0.$$
	It follows that $\mbox{Ent}(\Psi)=\mbox{Ent}(\Psi|_{{X}_1})=a$.
	
	\item By (\ref{eq:ent}), $\mbox{Ent}(\psi_k)=\max\{a, \log{n_k}\}$. So $\liminf_{k\rightarrow \infty}\mbox{Ent}(\psi_k)=\liminf_{k\rightarrow \infty}\log{n_k}=b$ and $\limsup_{k\rightarrow \infty}\mbox{Ent}(\psi_k)=\limsup_{k\rightarrow \infty}\log{n_k}=c$.
\end{enumerate}

\end{document}